\newtheorem{theorem}{Theorem}[section]
\newtheorem{lemma}[theorem]{Lemma}
\newtheorem{proposition}[theorem]{Proposition}
\theoremstyle{definition}
\newtheorem{definition}[theorem]{Definition}
\newtheorem{corollary}[theorem]{Corollary}
\newtheorem{remark}[theorem]{Remark}
\newenvironment{customthm}[1]
  {\innercustomthm}
  {\endinnercustomthm}
\newenvironment{customconj}[1]
  {\innercustomconj}
  {\endinnercustomconj}
\begin{document} 

\title{Canonical growth conditions associated to ample line bundles}
\author{David Witt Nystr\"om}
\maketitle

\begin{abstract}
We propose a new construction which associates to any ample (or big) line bundle $L$ on a projective manifold $X$ a canonical growth condition (i.e. a choice of a psh function well-defined up to a bounded term) on the tangent space $T_p X$ of any given point $p$. We prove that it encodes such classical invariants as the volume and the Seshadri constant. Even stronger, it allows you to recover all the infinitesimal Okounkov bodies of $L$ at $p$. The construction is inspired by toric geometry and the theory of Okounkov bodies; in the toric case the growth condition is "equivalent" to the moment polytope. As in the toric case the growth condition says a lot about the K\"ahler geometry of the manifold. We prove a theorem about K\"ahler embeddings of large balls, which generalizes the well-known connection between Seshadri constants and Gromov width established by McDuff and Polterovich.
\end{abstract}

\section{Introduction}

In toric geometry there is a well-known correspondence between Delzant polytopes $\Delta$ and toric manifolds $X_{\Delta}$ equipped with an ample torus-invariant line bundles $L_{\Delta}$. This is important since many properties of $L_{\Delta}$ can be read directly from the polytope $\Delta.$ Okounkov found in \cite{Okounkov1,Okounkov2} a generalization of sorts, namely a way to associate a convex body $\Delta(L)$ to an ample line bundle $L$ on a projective manifold $X$. The construction depends on the choice of a flag of subvarieties in $X$, and in the toric case, if one uses a torus-invariant flag, one essentially gets back the polytope $\Delta.$ The convex bodies $\Delta(L)$ are now called Okounkov bodies, and have been studied by many, e.g. Kaveh-Khovanskii \cite{Kaveh1,Kaveh2} and Lazarsfeld-Musta\c{t}\u{a} \cite{LazMus} (for more references see the exposition \cite{BouckOk}).

In this paper we propose a different way of generalizing the toric association between polytopes and line bundles. 

Given a polarized projective manifold $(X,L)$ and a point $p\in X$ we will show how to construct an $S^1$-invariant psh function $\phi_{L,p}$ on $T_p X$ encoding interesting geometric data. The construction will depend on the choice of a smooth metric $\phi$ of $L,$ but for a different choice $\phi'$ we get a new function $\phi'_{L,p}$ that only differs from $\phi_{L,p}$ by a bounded term. Thus the equivalence class of psh functions of the form $\phi_{L,p}+O(1)$ is canonically determined by the data $(X,L,p)$; we call it the \emph{canonical growth condition of $L$ at $p$}. 

In the toric case $(X_{\Delta},L_{\Delta},p)$, with $p$ an invariant point, the growth condition will have an $(S^1)^n$-symmetry. $\Delta$ can then be recovered as the image of the gradient of the associated convex function. 

The construction is inspired by the theory of Okounkov bodies, and in particular the Chebyshev transform introduced by the author in \cite{DWN}. This connection is described in \ref{Section Okounkov}.

It is interesting to note that while the Okounkov body depends on the choice of a flag of smooth subvarieties, our canonical growth condition will only depend on the data $(X,L,p)$. In fact we show that all the infinitesimal Okounkov bodies $\Delta(L,V_{\bullet})$ of $L$ at $p$, which depend on a choice of a flag $V_{\bullet}$ of subspaces of $T_p X$, can be recovered from the canonical growth condition. The downside is of course that it is a more complicated object than a convex body; a convex body is essentially equivalent to a growth condition with $(S^1)^n$-symmetry, while the canonical growth condition only is guaranteed to have an $S^1$-symmetry.

\subsection{Canonical growth conditions: toric case} 

First let us describe how, in the toric case, the data of the polytope $\Delta$ is naturally encoded as a growth condition on the tangent space $T_p X_{\Delta}$ at any torus-invariant point $p\in X_{\Delta}$.

A torus-invariant point $p$ corresponds to a vertex of $\Delta.$ We can assume this vertex to sit at the origin and $\Delta$ to lie in the positive orthant. There is then a $\mathbb{C}^n$ inside $X_{\Delta}$ on which the torus action is standard. In these coordinates $p=0$. There is also a natural trivialization of $L_{\Delta}$ over $\mathbb{C}^n\subseteq X_{\Delta}$. As is well-known $$H^0(X_{\Delta}, kL_{\Delta}) \cong \oplus_{\alpha\in k\Delta_{\mathbb{Z}}}\langle z^{\alpha}\rangle,$$ where $k\Delta_{\mathbb{Z}}$ is shorthand for $k\Delta \cap \mathbb{Z}^n$ and the equivalence map is given by restriction to $\mathbb{C}^n\subseteq X_{\Delta}$.  

For $k$ large enough $kL_{\Delta}$ is very ample which means that the metric $$\frac{1}{k} \log \left(\sum_{\alpha \in k\Delta_{\mathbb{Z}}}|s_{\alpha,k}|^2\right)$$ is positive. This metric restricts to the plurisubharmonic function $$\phi_{\Delta,p}:=\frac{1}{k} \log \left(\sum_{\alpha \in k\Delta_{\mathbb{Z}}}|z^{\alpha}|^2\right)$$ on $\mathbb{C}^n \subseteq X_{\Delta}$. 

When constructing $\phi_{\Delta,p}$ we had to make a couple of choices. But we note that one can identify $\mathbb{C}^n\subseteq X_{\Delta}$ with the tangent space $T_p X_{\Delta}$ and thus think of $\phi_{\Delta,p}$ as a function on $T_p X_{\Delta}$. For any other choice we get a possibly different plurisubharmonic function $\phi'_{\Delta,p}$ on $T_p X_{\Delta}$ but we would have $$\phi'_{\Delta,p}=\phi_{\Delta,p}+O(1).$$ This ensures that the growth condition $\phi_{\Delta,p}+O(1)$ is well-defined. 

Since $\phi_{\Delta,p}$ is $(S^1)^n$-invariant we have that $$\phi_{\Delta,p}(z)=u(\ln|z_1|^2,...,\ln|z_n|^2)$$ where $u$ is a convex function on $\mathbb{R}^n$. One then sees that $\Delta^{\circ}$ is the image of the gradient of $u$; in this way $\Delta$ can be recovered from the growth condition $\phi_{\Delta,p}+O(1)$.

\subsection{Canonical growth conditions: general case}

We will now describe how, given the data $(X,L)$ and $p\in X$ one constructs a canonical growth condition $\phi_{L,p}+O(1)$ on $T_p X$ which will generalize $\phi_{\Delta,p}+O(1)$ in the toric setting.

Pick local holomorphic coordinates $z_i$ centered at $p,$ and choose a local trivialization of $L$ near $p.$ Then any holomorphic section of $L$ (or more generally $kL$) can be written locally as a Taylor series $$s=\sum a_{\alpha}z^{\alpha}.$$ Let $ord_p(s)$ denote the order of vanishing of $s$ at $p.$ The leading order homogeneous part of $s$, which we will denote by $s_{hom},$ is then given by $$s_{hom}:=\sum_{|\alpha|=ord_p(s)}a_{\alpha}z^{\alpha},$$ or if $s\equiv 0$ we let $s_{hom}\equiv 0$. If $\gamma(t)$ is a smooth curve in $\mathbb{C}^n$ of the form $\gamma(t)=tz_0+t^2h(t)$ then one easily checks that $$\lim_{t\to 0}\frac{s(\gamma(t))}{t^{ord_p(s)}}=\lim_{t\to 0}\frac{s(tz_0)}{t^{ord_p(s)}}=s_{hom}(z_0),$$ which shows that $s_{hom}$ in fact is a well-defined homogeneous holomorphic function on the tangent space $T_p X$. A different choice of trivialization would have the trivial effect of multiplying each $s_{hom}$ by a fixed constant.

\begin{remark}
As pointed out by the anonymous referee, $s_{\hom}$ can be described more invariantly as the the image of $s$ under the canonical isomorphism $$(\mathfrak{m}_p^k/\mathfrak{m}_p^{k+1})\otimes L_p\cong S^k(\mathfrak{m}_p/\mathfrak{m}_p^2)\otimes L_p\cong S^kT^*_{X,p}\otimes L_p,$$ where $\mathfrak{m}_p\subset \mathcal{O}_{X,p}$ denotes the maximal ideal and $k=ord_p(s)$.
\end{remark}

Pick a smooth metric $\phi$ on $L$. This gives rise to supremum norms on each vector space $H^0(X,kL),$ by simply $$||s||^2_{k\phi,\infty}:=\sup_{x\in X}\{|s(x)|^2e^{-k\phi}\}.$$ Let $$B_1(kL,k\phi):=\{s\in H^0(X,kL): ||s||_{k\phi,\infty}\leq 1\}$$ be the corresponding unit balls in $H^0(X,kL)$.

\begin{definition}
Let $$\phi_{L,p}:=\sup^*\left\{\frac{1}{k}\ln|s_{hom}|^2: s\in B_1(kL,k\phi), k\in \mathbb{N} \right\}.$$ Here $^*$ means taking the upper semicontinuous regularization of the supremum. 
\end{definition}

We show in Proposition \ref{proplocbound} that $\phi_{L,p}$ is locally bounded from above, hence $\phi_{L,p}$ is a plurisubharmonic function on $T_p X.$ 

It is easy to see that if $\phi'$ is some other smooth metric on $L$ and $|\phi-\phi'|<C$ then $|\phi_{L,p}-\phi'_{L,p}|<C.$ Thus the growth condition $\phi_{L,p}+O(1)$ on $T_p X$ is well-defined and only depends on the data $X$, $L$ and $p.$ We call it the \emph{canonical growth condition} of $L$ at $p.$

The following theorem says that this construction generalizes the toric one.

\begin{theorem}
If $\Delta$ is Delzant, $(X_{\Delta},L_{\Delta})$ is the associated polarized toric manifold and $p$ is a fixed point then $$\phi_{L_{\Delta},p}+O(1)=\phi_{\Delta,p}+O(1).$$
\end{theorem}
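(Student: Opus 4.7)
The plan is to prove the matched pair of inequalities $\phi_{L_\Delta,p}\leq\phi_{\Delta,p}+O(1)$ and $\phi_{L_\Delta,p}\geq\phi_{\Delta,p}+O(1)$. Since the canonical growth condition is independent of the choice of smooth reference metric up to $O(1)$, I would fix a convenient torus-invariant one: pick $k_0$ such that $k_0 L_\Delta$ is very ample and set $\phi(z):=\frac{1}{k_0}\log\sum_{\alpha\in k_0\Delta_\mathbb{Z}}|z^\alpha|^2$, whose restriction to the chart $\mathbb{C}^n\subset X_\Delta$ is literally one representative of $\phi_{\Delta,p}$. It will also be useful to rewrite $\phi_{\Delta,p}$ in its tropical form: an elementary $\log$-sum-exp estimate shows $\phi_{\Delta,p}(z)=2\sup_{v\in\Delta}v\cdot\log|z|+O(1)$ uniformly on $\mathbb{C}^n$, the point being that since $\Delta$ is a lattice polytope, the linear supremum $\sup_{v\in\Delta}v\cdot\log|z|$ is attained at a vertex of $\Delta$ lying in $k\Delta_\mathbb{Z}/k$ for every $k$; this absorbs the only potentially $k$-dependent error.

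For the lower bound I would use explicit product sections. For any $m\geq 1$ and any choice of $\alpha_1,\dots,\alpha_m\in k_0\Delta_\mathbb{Z}$, the monomial $s=z^{\alpha_1+\cdots+\alpha_m}$ defines a global section of $mk_0L_\Delta$, and the pointwise bound $|z^{\alpha_i}|^2\leq\sum_{\alpha\in k_0\Delta_\mathbb{Z}}|z^\alpha|^2=e^{k_0\phi(z)}$ immediately gives $|s(z)|^2\leq e^{mk_0\phi(z)}$, so $s\in B_1(mk_0L_\Delta,mk_0\phi)$. Since $s$ is homogeneous, $s_{hom}=s$, and writing $\gamma=\alpha_1+\cdots+\alpha_m$ one reads off $\phi_{L_\Delta,p}(z)\geq\frac{1}{mk_0}\log|z^\gamma|^2=2\bigl(\gamma/(mk_0)\bigr)\cdot\log|z|$. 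As $m$ varies, the points $\gamma/(mk_0)$ fill a dense subset of $\Delta$, so taking a supremum and using continuity of $v\mapsto v\cdot\log|z|$ yields $\phi_{L_\Delta,p}(z)\geq 2\sup_{v\in\Delta}v\cdot\log|z|=\phi_{\Delta,p}(z)+O(1)$.

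For the upper bound I would exploit torus invariance to control the monomial coefficients of any competing section. Expanding an arbitrary $s\in B_1(kL_\Delta,k\phi)$ as $s=\sum_{\beta\in k\Delta_\mathbb{Z}}c_\beta z^\beta$ and averaging $|s(e^{i\theta}z)|^2\leq e^{k\phi(z)}$ against the characters of $(S^1)^n$ produces $|c_\beta z^\beta|^2\leq e^{k\phi(z)}$ pointwise; evaluating at $z=(1,\dots,1)$ bounds $|c_\beta|\leq e^{O(k)}$ uniformly in $\beta$. A Cauchy--Schwarz step then gives $|s_{hom}(z)|^2\leq (\#k\Delta_\mathbb{Z})^2\,e^{O(k)}\,\max_{\beta\in k\Delta_\mathbb{Z}}|z^\beta|^2$, and the tropical formula identifies the max with $e^{2k\sup_{v\in\Delta}v\cdot\log|z|}=e^{k\phi_{\Delta,p}(z)+O(k)}$. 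Taking $\frac{1}{k}\log$ and the supremum over $s$ yields $\phi_{L_\Delta,p}\leq\phi_{\Delta,p}+O(1)$.

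The hard part will be bookkeeping: one must check that every implicit $O(\cdot)$ constant is truly uniform in both $z$ and $k$, and in particular that polynomial-in-$k$ prefactors such as $\#k\Delta_\mathbb{Z}=O(k^n)$ contribute only $o(1)$ after division by $k$ in the exponent. The conceptual reason it all works is that the lattice-polytope hypothesis makes the tropicalisation exact at the vertices, so the gap between the Laplace-type sums defining $\phi_{\Delta,p}$ and the piecewise-linear function $2\sup_{v\in\Delta}v\cdot\log|z|$ is uniformly $O(1)$, matching the tolerance built into the notion of growth condition.
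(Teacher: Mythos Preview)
Your proof is correct. The lower bound matches the paper's approach: the paper simply uses the sections $s_\alpha$ for a single $k$ with $kL_\Delta$ very ample, noting that $\sup_{\alpha\in k\Delta_{\mathbb Z}}\frac{1}{k}\ln|z^\alpha|^2=\phi_{\Delta,p}+O(1)$ since the vertices of $\Delta$ are already lattice points; your product-and-density variant works equally well.

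For the upper bound you take a genuinely different route. The paper argues via the envelope $\psi^\lambda:=\sup\{\psi\leq\phi:\psi\in PSH(X_\Delta,L_\Delta),\ \nu_p(\psi)\geq\lambda\}$: since $\phi$ is invariant under the diagonal $S^1$-scaling, so is $\psi^\lambda$, hence $t\mapsto\psi^\lambda(e^{t/2}z)$ is convex; combined with $\nu_0(\psi^\lambda)\geq\lambda$ this forces $\psi^\lambda(e^{t/2}z)-\lambda t$ to be increasing in $t$, so $\psi^\lambda_{hom}(z):=\lim_{\tau\to 0}(\psi^\lambda(\tau z)-\lambda\ln|\tau|^2)\leq\psi^\lambda(z)$. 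Any $s\in B_1(mL_\Delta,m\phi)$ with $ord_p(s)=m\lambda$ satisfies $\frac{1}{m}\ln|s|^2\leq\psi^\lambda$, hence $\frac{1}{m}\ln|s_{hom}|^2\leq\psi^\lambda_{hom}\leq\psi^\lambda\leq\phi=\phi_{\Delta,p}$ on $\mathbb{C}^n$. Your argument instead exploits the full $(S^1)^n$-symmetry via Fourier averaging to bound each monomial term separately. Your method is more elementary and self-contained for the toric case; the paper's method, which uses only the diagonal $S^1$, previews the envelope and homogenisation machinery (the metrics $\psi^\lambda$ and the loghomogeneous parts $\phi_{L,p}^\lambda$) that drives the proofs of the main theorems in the non-toric setting. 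One minor simplification you could make: the detour through $z=(1,\dots,1)$ and the $e^{O(k)}$ bound on $|c_\beta|$ is unnecessary, since the torus average already gives $|c_\beta z^\beta|^2\leq e^{k\phi_{\Delta,p}(z)}$ pointwise, from which $|s_{hom}(z)|^2\leq(\#k\Delta_{\mathbb Z})^2\, e^{k\phi_{\Delta,p}(z)}$ follows directly without the extra exponential factor.
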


\subsection{Main results}

As in the toric case the canonical growth condition $\phi_{L,p}+O(1)$ carries important geometrical information about $(X,L,p)$.

The first main result says that the volume $\int_X c_1(L)^n=:(L^n)$ of $L$ is equal to the Monge-Amp\`ere volume of the growth condition:

\begin{customthm}{A} \label{mainthmvolume}
We have that 
\begin{equation} \label{eqvol}
(L^n)=\int_{T_p X}MA(\phi_{L,p}).
\end{equation}
\end{customthm}

Recall that for a smooth plurisubharmonic function $u$, $$MA(u):=(dd^c u)^n.$$ However, when $u$ is not smooth $dd^c u$ is not a form but rather a closed positive current, whose wedge product has no obvious meaning. Nevertheless Bedford-Taylor showed that one can define a positive measure $MA(u)$ called the Monge-Amp\`ere measure as long as $u$ is plurisubharmonic and locally bounded (as is the case for $\phi_{L,p}$). E.g. it is the unique weak limit of measures $(dd^c u_j)^n$ where $u_j$ are smooth plurisubharmonic functions decreasing to $u.$

Recall the definition of the Seshadri constant $\epsilon(X,L,p),$ introduced by Demailly \cite{Demailly}.

\begin{definition}
The Seshadri constant of an ample line bundle $L$ at a point $p$ is given by $$\epsilon(X,L,p):=\inf_C \frac{L\cdot C}{\textrm{mult}_p C},$$ where the infimum is taken over all curves $C$ in $X.$
\end{definition}

Our second main result is that the Seshadri constant $\epsilon(X,L,p)$ can be calculated using the canonical growth condition.

\begin{customthm}{B} \label{mainthmsesh}
$$\epsilon(X,L,p)=\sup\{\lambda: \lambda\ln(1+|z|^2)\leq \phi_{L,p}+O(1)\}.$$
\end{customthm}

In fact much more is true. Let $\pi:\tilde{X}\to X$ denote the blowup of $p$ in $X,$ and $E$ the exceptional divisor. Recall that $\epsilon(X,L,p)$ also can be defined as the supremum of $\lambda$ such that $\pi^*L-\lambda E$ is nef. For $\lambda>\epsilon(X,L,p)$ the canonical growth condition contains information about how $\pi^*L-\lambda E$ fails to be nef along $E$, to be precise it allows you to calculate the minimal multiplicities of $\pi^*L-\lambda E$ at each point $x\in E.$ 

We now recall the construction of the Okounkov bodies. Let $L$ be a big line bundle on a projective manifold $X.$ Pick a point $p\in X.$ Now one can either use a complete flag of smooth subvarieties $X=Y_0\supset ...\supset Y_i \supset ... \supset Y_n=\{p\}$ in $X$ ($Y_i$ having codimension $i$) or one can choose local coordinates $z_i$ centered at $p.$ Here we will use local coordinates in the construction. For any section $s\in H^0(X,kL),$ $k\in \mathbb{N},$ we can write $$s=\sum_{\alpha}a_{\alpha}z^{\alpha}$$ locally near $p.$ When $s$ is nonzero we let $v(s)$ be the least multiindex $\alpha$ such that $a_{\alpha}$ is nonzero. With the least we mean the multiindex which is minimal with respect to some fixed additive order $<$ on $\mathbb{N}^n$ (classically the lexicographic one is used). The Okounkov body $\Delta(L)$ of $L$ (which thus also depends on $p$, the coordintes $z_i$ and order $<$, but this is usually not written out) is then defined as $$\Delta(L):=Conv(\{v(s)/k: s\in H^0(X,kL)\setminus \{0\}, k\in \mathbb{N}_+\}).$$ Here $Conv$ means the closed convex hull.

Recall that the deglex order on $\mathbb{N}^n$ is defiend so that $\alpha<\beta$ iff $|\alpha|<|\beta|$ or else if $|\alpha|=|\beta|$ and $\alpha$ is less than $\beta$ lexicographically. Note that when using deglex $v(s)$ only depends on the leading homogeneous part $s_{hom}$. We have already seen that $s_{hom}$ is a well-defined polynomial on $T_p X$, and it follows that the resulting Okounkov body only depends on the flag $V_i:=\{z_1=...=z_i=0\}$ of subspaces of $T_p X$. We call such an Okounkov body an infinitesimal Okounkov body at $p$, and denote it by $\Delta(L,V_{\bullet})$. 

Let $F(\alpha):=(|\alpha|,\alpha_1,...,\alpha_{n-1})$ and note that $\alpha<\beta$ in deglex iff $F(\alpha)<F(\beta)$ in lex. Geometrically this corresponds to blowing up the point $p$ and considering the flag of smooth subvarities in the blowup induced by $V_{\bullet}$. It was in this guise that the infinitesimal Okounkov bodies were considered in \cite{LazMus} and also recently in the work of K\"uronya-Lozovanu \cite{KL15a,KL15b}. Thus to get the infinitesimal Okounkov body in the sense of \cite{KL15a,KL15b} you have to take the image of $\Delta(L,V_{\bullet})$ under $F$.

Okounkov proved that when $L$ is ample (the only situation he considered) the euclidean volume of $\Delta(L)$ equals (up to a factor of $n!$) the top selfintersection (or volume) of $L:$ $$\textrm{vol}(\Delta(L))=\frac{(L^n)}{n!}.$$ Later Kaveh-Khovanskii \cite{Kaveh1,Kaveh2} and Lazarsfeld-Musta\c{t}\u{a} \cite{LazMus} independently proved that the same is true for big line bundles, if one instead of the top selfintersection use the volume of $L,$ hence $$\textrm{vol}(\Delta(L))=\frac{\textrm{vol}(L)}{n!}.$$ This is a key aspect of Okounkov bodies, as it allows one to study the volume of line bundles via convex analysis, e.g. the Brunn-Minkowski inequality.

K\"uronya-Lozovanu recently proved in \cite{KL15a,KL15b} that the infinitesimal Okounkov bodies $\Delta(L,V_{\bullet})$ at $p$ also detect the Seshadri constant $\epsilon(X,L,p)$ (see also the work of Ito \cite{Ito}). Namely (given our definition), for ample $L$, they prove that $\epsilon(X,L,p)$ is equal to the supremum of $\lambda$ such that $\lambda \Sigma\subseteq \Delta(L,V_{\bullet})$ (here $\Sigma$ denotes the standard simplex in $\mathbb{R}^n$) and this is independent of which flag $V_{\bullet}$ in $T_p X$ one uses. When $L$ is just big the same statement is true for the moving Seshadri constant.

This means that the infinitesimal Okounkov bodies are finer invariants than both the volume and the Seshadri constant.

\begin{customthm}{C} \label{mainthmok}
The canonical growth condition $\phi_{L,p}+O(1)$ completely determines all the infinitesimal Okounkov bodies $\Delta(L,V_{\bullet})$ of $L$ at $p$.
\end{customthm}

To formulate our fourth main result, we introduce the following notion:
 
\begin{definition}
Let $\omega_0$ be a K\"ahler form on $\mathbb{C}^n$. We say that $\omega_0$ \emph{fits into $(X,L)$} if for any $R>0$ there exists a K\"ahler form $\omega_R$ on $X$ in $c_1(L)$ together with a K\"ahler embedding $f_R$ of the ball $(B_R,{\omega_0}_{|B_R})$ into $(X,\omega_R)$. Here $B_R:=\{|z|<R\}\subseteq \mathbb{C}^n$ denotes the usual euclidean ball of radius $R$. If the embeddings $f_R$ all can be chosen to map the origin to some fixed point $p\in X$ we say that $\omega_0$ \emph{fits into $(X,L)$ at $p$}. 
\end{definition}

In symplectic geometry there is a related and important notion, namely that of Gromov width. The Gromov width of a symplectic manifold $(M,\omega)$, denoted by $c_G(M,\omega),$ is defined as the supremum of $\pi r^2$ where $r$ is such that $(B_r,\omega_{st})$ embeds symplectically into $(M,\omega)$ ($\omega_{st}$ here denotes the standard symplectic form on $\mathbb{C}^n$). One can easily check that $(B_r,\omega_{st})$ is symplectomorphic to $(\mathbb{C}^n,\pi r^2\omega_{FS})$, where $\omega_{FS}:=dd^c\log(1+|z|^2)$ denotes the Fubini-Study form on $\mathbb{C}^n\subseteq \mathbb{P}^n.$ If $\lambda\omega_{FS}$ fits into $(X,L)$, then since the K\"ahler manifolds $(X,\omega_R)$ are all symplectomorphic, it implies that $c_G(X,\omega)\geq \lambda.$ 

Interestingly, which multiples of $\omega_{FS}$ that fit into $(X,L)$ at a point $p$ is determined by the Seshadri constant.

\begin{theorem} \label{gromov}
We have that $\lambda\omega_{FS}$ fits into $(X,L)$ at $p\in X$ iff $$\lambda\leq \epsilon(X,L,p).$$
\end{theorem}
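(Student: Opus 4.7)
I prove the two implications separately.

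For ``$\Leftarrow$'', reduce to the strict case $\lambda<\epsilon(X,L,p)$ (the boundary case following by continuity) and pick any $\lambda'\in(\lambda,\epsilon(X,L,p))$. Then $\pi^*L-\lambda'E$ is ample on the blowup $\pi:\tilde X\to X$, so it carries a smooth positively curved metric $\tilde\phi$. Transferring to $L$ via the identity $\pi^*\phi_L=\tilde\phi+\lambda'\log|s_E|^2$, where $s_E$ is the canonical section of $\mathcal O(E)$, produces a singular positive metric $\phi_L$ on $L$, smooth on $X\setminus\{p\}$, with local expansion
$$\phi_L(z)=\lambda'\log|z|^2+u(z)$$
near $p$ in local coordinates for a bounded continuous function $u$. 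Given $R>0$, choose $r>0$ small (with $r$ well below the coordinate chart radius divided by $R$) and a constant $C_R$, and set $\Phi_{r,R}(z):=\lambda\log(|z|^2+r^2)+C_R$. Since
$$\Phi_{r,R}-\phi_L=(\lambda-\lambda')\log|z|^2+\lambda\log(1+r^2/|z|^2)+C_R-u(z),$$
this difference tends to $+\infty$ as $|z|\to 0$ (the $-\lambda'\log|z|^2$ term dominates) and to $-\infty$ as $|z|$ grows (since $\lambda<\lambda'$). Adjusting $C_R$ so that the crossover occurs far outside $|z|=Rr$, one arranges $\Phi_{r,R}>\phi_L+1$ on $\{|z|<Rr\}$ and $\Phi_{r,R}<\phi_L-1$ on an outer collar. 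Demailly's regularized maximum then patches $\Phi_{r,R}$ and $\phi_L$ across the intermediate annulus into a smooth strictly plurisubharmonic metric $\psi_R$ on $L$ equal to $\Phi_{r,R}$ on $\{|z|<Rr\}$ and to $\phi_L$ past the outer collar. The Kähler form $\omega_R:=dd^c\psi_R$ lies in $c_1(L)$, and the holomorphic map $f_R(w):=rw$, which identifies $B_R$ with $\{|z|<Rr\}$ and pulls $\omega_R$ back to $\lambda\omega_{FS}$, realizes the required embedding with $f_R(0)=p$.

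For ``$\Rightarrow$'', I extract the Seshadri bound directly from the definition $\epsilon(X,L,p)=\inf_C(L\cdot C)/\operatorname{mult}_p C$. For any irreducible curve $C\subset X$ through $p$ with $m:=\operatorname{mult}_p C$, positivity of $\omega_R$ and the identity $f_R^*\omega_R=\lambda\omega_{FS}$ on $B_R$ yield
$$L\cdot C=\int_C\omega_R\geq\int_{f_R^{-1}(C)\cap B_R}\lambda\omega_{FS}.$$
The preimage $f_R^{-1}(C)$ is an analytic curve through $0\in B_R$ with multiplicity $m$; decomposing into branches parameterized by $t\mapsto\phi_i(t)$ with $\operatorname{ord}_0\phi_i=m_i$ and $\sum m_i=m$, a Poincaré--Lelong computation (modelled on $\int_{\mathbb C}dd^c\log(1+|t|^{2m_i})=m_i$) shows that the right-hand side tends to at least $\lambda m$ as $R\to\infty$. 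Hence $L\cdot C\geq\lambda m$ for every such $C$, and $\epsilon(X,L,p)\geq\lambda$.

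The main obstacle is the quantitative matching in the ``$\Leftarrow$'' direction. The strict inequality $\lambda<\lambda'$ produces a genuine asymptotic gap between the logarithmic growth rates of $\phi_L$ and $\Phi_{r,R}$, turning the transition from $\Phi_{r,R}$ on the inner ball to $\phi_L$ past the outer collar into an elementary crossover of two plurisubharmonic functions with different logarithmic exponents. This gap is the sole place the Seshadri hypothesis enters, and it is what makes the construction succeed for arbitrarily large $R$: were only $\lambda=\lambda'$ available, $\Phi_{r,R}$ and $\phi_L$ would differ only by a bounded term near the pole, and no regularized-maximum patching could effect the required change of regime.
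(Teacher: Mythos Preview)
The paper does not actually prove this theorem; after the statement it simply writes ``This result can be extracted from Lazarsfeld \cite{Lazarsfeld} \ldots; the main argument is due to McDuff--Polterovich \cite{McDuff}.'' Your argument is precisely the standard McDuff--Polterovich blow-up/gluing construction for the $\Leftarrow$ direction combined with a direct symplectic-area estimate for the $\Rightarrow$ direction, so you have supplied what the paper delegates to the literature, along the same lines.

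Two places deserve more care. First, ``the boundary case following by continuity'' is not an argument: your construction genuinely requires a $\lambda'\in(\lambda,\epsilon(X,L,p))$, and there is none when $\lambda=\epsilon(X,L,p)$. A correct fix is to note that for each fixed $R$ the potential $\epsilon\log(1+|z|^2)$ restricted to $B_R$ has radial Legendre slope $\epsilon R^2/(1+R^2)<\epsilon$, so it extends to a smooth strictly psh function on all of $\mathbb{C}^n$ equal to $\epsilon\log(1+|z|^2)$ on $B_R$ but with growth $\epsilon'\log(1+|z|^2)+O(1)$ for some $\epsilon'\in(\epsilon R^2/(1+R^2),\epsilon)$; then apply your strict-inequality construction to this extension. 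Second, in the $\Rightarrow$ direction the curves $f_R^{-1}(C)$ vary with $R$, so ``tends to at least $\lambda m$ as $R\to\infty$'' is not a limit along a single family and your branch-by-branch parametrization does not by itself give a uniform bound. What you need is the uniform Lelong-type inequality: any closed analytic curve $\Gamma\subset B_R$ through $0$ with $\operatorname{mult}_0\Gamma=m$ satisfies $\int_\Gamma\omega_{FS}\geq m\,h(R)$ for some explicit $h(R)\to 1$. This follows from Lelong's monotonicity $\operatorname{Area}_{\omega_{st}}(\Gamma\cap B_r)\geq m\pi r^2$ combined with the pointwise bound $\omega_{FS}\geq \pi^{-1}(1+|z|^2)^{-2}\omega_{st}$ and an integration by parts; it should be stated and invoked as a lemma rather than sketched via model branches.
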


This result can be extracted from Lazarsfeld \cite{Lazarsfeld} (see Theorem 5.1.22 and Proposition 5.3.17); the main argument is due to McDuff-Polterovic \cite{McDuff}. 
 
Thereom \ref{gromov} yields a sufficient condition for a K\"ahler form $\omega_0$ to fit into $(X,L)$ at $p.$

\begin{proposition} \label{suffcond}
If $\omega_0=dd^c\phi_0$ and for some $\lambda<\epsilon(X,L,p)$ we have that 
\begin{equation} \label{grcondintro}
\phi_0\leq \lambda\ln(1+|z|^2)+O(1)
\end{equation} 
then $\omega_0$ fits into $(X,L)$ at $p.$
\end{proposition}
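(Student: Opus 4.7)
My plan is to deduce the proposition from Theorem~\ref{gromov} by a potential-gluing argument in a single chart near $p$. First, pick any $\lambda'$ with $\lambda<\lambda'<\epsilon(X,L,p)$ and apply Theorem~\ref{gromov} to $\lambda'\omega_{FS}$: for each auxiliary radius $R'>R$ (to be chosen large) this yields a K\"ahler form $\omega'_{R'}\in c_1(L)$ and a K\"ahler embedding $g_{R'}\colon(B_{R'},\lambda'\omega_{FS}|_{B_{R'}})\to(X,\omega'_{R'})$ with $g_{R'}(0)=p$. I will then modify $\omega'_{R'}$ inside $g_{R'}(B_{R'})$ so that the resulting form pulls back to $\omega_0$ on $B_R$, while leaving $\omega'_{R'}$ untouched outside.

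For this, fix a local trivialization of $L$ on an open neighborhood $U\supset g_{R'}(\overline{B_{R'}})$ and let $\phi$ be a smooth strictly psh potential of $\omega'_{R'}$ on $U$. Since $dd^c(g_{R'}^*\phi)=\lambda'\,dd^c\log(1+|z|^2)$, we have $g_{R'}^*\phi=\lambda'\log(1+|z|^2)+h$ for some pluriharmonic function $h$ bounded on $\overline{B_{R'}}$, and the growth hypothesis gives
$$g_{R'}^*\phi-\phi_0\geq(\lambda'-\lambda)\log(1+|z|^2)+h-C_0.$$
Since $\lambda'>\lambda$, this quantity can be made arbitrarily large near $\partial B_{R'}$ by enlarging $R'$. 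Fixing a small $\epsilon>0$, I choose $R'$ and a constant $A$ so that $\phi_0>g_{R'}^*\phi-A$ on $\overline{B_R}$ and $\phi_0+2\epsilon<g_{R'}^*\phi-A$ on a collar neighborhood of $\partial B_{R'}$.

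The gluing is then performed with the regularized maximum. Set $\psi:=\max_\epsilon(\phi_0,\,g_{R'}^*\phi-A)$ on $B_{R'}$: this is smooth, strictly psh (regularized max preserves strict plurisubharmonicity), equals $\phi_0$ on $\overline{B_R}$, and equals $g_{R'}^*\phi-A$ in the collar near $\partial B_{R'}$. Transporting $\psi$ to $g_{R'}(B_{R'})$ via $g_{R'}^{-1}$ and extending by $\phi-A$ on $U\setminus g_{R'}(B_{R'})$ yields a smooth strictly psh potential $\tilde\phi$ on $U$ that agrees with $\phi-A$ near $\partial g_{R'}(B_{R'})$. Defining $\omega_R$ as $dd^c\tilde\phi$ on $U$ and as $\omega'_{R'}$ off $g_{R'}(B_{R'})$, the two definitions agree on the overlap, so $\omega_R$ is a smooth K\"ahler form on $X$; since $\tilde\phi-\phi$ extends by the constant $-A$ to a global smooth function, $[\omega_R]=c_1(L)$. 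The restriction $f_R:=g_{R'}|_{B_R}$ then satisfies $f_R^*\omega_R=dd^c\phi_0=\omega_0|_{B_R}$ and sends $0$ to $p$, as required.

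The main obstacle will be keeping $\omega_R$ globally strictly positive across the transition region where the two definitions are spliced. This is where the strict inequality $\lambda<\lambda'$ is essential: the logarithmic margin $(\lambda'-\lambda)\log(1+|z|^2)$ both absorbs the $O(1)$ slack in the hypothesis and forces $g_{R'}^*\phi-A$ to strictly dominate $\phi_0$ on a full open neighborhood of $\partial B_{R'}$, so that $\max_\epsilon$ reduces there to the single strictly psh branch $g_{R'}^*\phi-A$, from which positivity is inherited from $\omega'_{R'}$.
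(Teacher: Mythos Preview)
Your approach is the same as the paper's---glue $\phi_0$ to a Fubini--Study potential via a regularized maximum and invoke Theorem~\ref{gromov}---but there is a gap in the execution. The pluriharmonic correction $h=g_{R'}^*\phi-\lambda'\log(1+|z|^2)$ depends on $R'$ (since $g_{R'}$, $\omega'_{R'}$ and hence $\phi$ are all produced anew by Theorem~\ref{gromov} for each $R'$), and you have no control over it. Your claim that $(\lambda'-\lambda)\log(1+|z|^2)+h-C_0$ can be made large near $\partial B_{R'}$ by enlarging $R'$ is therefore unjustified: nothing prevents $h$ from being, say, $-R'\cdot\mathrm{Re}(z_1)$, whose minimum on the boundary sphere decays faster than the logarithmic gain. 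Consequently the choice of $R'$ and $A$ in your step~6 may be impossible.

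The fix is immediate and explains why the paper orders the two operations the other way round. Either (i) replace $\phi_0$ by $\phi_0+h$ in the regularized maximum: since $h$ is pluriharmonic this leaves $dd^c\psi=\omega_0$ on $B_R$, while the comparison $g_{R'}^*\phi-(\phi_0+h)=\lambda'\log(1+|z|^2)-\phi_0$ no longer involves $h$ and your growth estimate goes through; or (ii) follow the paper: first glue on $\mathbb{C}^n$ to produce a K\"ahler form equal to $\omega_0$ on $B_R$ and equal to $\lambda\omega_{FS}$ outside some $B_{R'}$, and only then embed a ball $B_{R''}$ with $R''>R'$ using Theorem~\ref{gromov}. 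In the latter order the difference of potentials is compactly supported by construction and the pluriharmonic defect never appears.
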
 

To prove the proposition one simply notes that given the growth condition (\ref{grcondintro}) one can easily construct a K\"ahler form on $\mathbb{C}^n$ identical to $\omega_0$ on some arbitrarily large ball while still equal to $\lambda\omega_{FS}$ on the complement of some even larger ball. It is then clear that the fitting of $\lambda \omega_{FS}$ implies the fitting of $\omega_0$.

One obvious necessary condition for a K\"ahler form $\omega_0$ on $\mathbb{C}^n$ to fit into $(X,L)$ is that 
\begin{equation} \label{neccond}
\int_{\mathbb{C}^n}\omega_0^n\leq \int_X c_1(L)^n=:(L^n).
\end{equation}

Combined with Theorem \ref{gromov} it implies the well-known inequality $$\int_{\mathbb{C}^n}(\epsilon(X,L,p)\omega_{FS})^n\leq (L^n),$$ i.e. $$\epsilon(X,L,p)\leq \sqrt[n]{(L^n)}.$$ When this inequality is strict there is a gap between the sufficient condition of Proposition \ref{suffcond} and the necessary condition (\ref{neccond}).  

Our final main result says that a new sufficient condition for a K\"ahler form $\omega_0$ on $\mathbb{C}^n$ to fit into $(X,L)$ at $p$ can be formulated in terms of the the canonical growth condition $\phi_{L,p}+O(1)$. First we need a definition.

\begin{definition}
If $u$ and $v$ are two real-valued functions on $\mathbb{C}^n$ we say that $u$ grows faster than $v$ (and that $v$ grows slower than $u$) if $u-v$ is bounded from below and proper (i.e. for any constant $C$ there is an $R>0$ such that $u-v>C$ on $\{z: |z|>R\}$).
\end{definition}
 
\begin{customthm}{D} \label{mainthmfit}
Let $\omega_0=dd^c\phi_0$ be a K\"ahler form on $\mathbb{C}^n$. If for some isomorphism $\mathbb{C}^n\cong T_p X$ we have that $\phi_0$ grows slower than $\phi_{L,p}$ then $\omega_0$ fits into $(X,L)$ at $p.$
\end{customthm}

Note that the by Theorem \ref{mainthmsesh} we see that Theorem \ref{mainthmfit} generalizes Theorem \ref{gromov}. By Theorem \ref{mainthmvolume} we have closed the gap between the volume of $(X,L)$ and the K\"ahler forms that fit into $(X,L)$ at $p.$ 

In a companion paper \cite{DWN2} we prove a related result for Okounkov bodies, i.e. that Okounkov bodies can be used to find toric K\"ahler forms on $\mathbb{C}^n$ that fit into $(X,L)$ at $p$ and whose volume can be made to approximate that of $(X,L)$.   

The results in \cite{DWN2} shows that $\phi_0\leq \phi_{L,p}+O(1)$ is not a necessary condition for $\omega_0=dd^c\phi_0$ to fit into $(X,L)$ at $p$. However we have the following conjecture which we believe to be true:

\begin{customconj}{E}
Assume that $\omega_0$ fits into $(X,L)$ at $p$. Also assume that for some sequence of associated K\"ahler embeddings $f_{R_i}$ where $R_i\to \infty$ as $i\to \infty$  there are constants $\lambda_i\to \infty$ such that $\lambda_i Df_{R_i}(0)$ converges to some isomorphism $\mathbb{C}^n\cong T_p X$. Then we have that $\omega_0=dd^c\phi_0$ where $$\phi_0\leq \phi_{L,p}+O(1),$$ using the isomorphism $\mathbb{C}^n\cong T_p X$ from the assumption to compare $\phi_0$ and $\phi_{L,p}$.
\end{customconj} 

From the proof of Theorem \ref{mainthmfit} one sees that that the K\"ahler embeddings $f_R$ we get have the property that their linearizations at zero are scalings of a fixed ismorphism $\mathbb{C}^n\cong T_p X$, and that the scaling factors tend to zero. Thus we are in the situation of the conjecture. In the examples of \cite{DWN2} the picture is different; there the linearizations at zero scale various directions at different rates (corresponding to the normal directions of the corresponding flag), and are therefore not counterexamples to the conjecture.

\subsection{Big line bundles}

The construction of the canonical growth condition $\phi_{L,p}+O(1)$ clearly makes sense when $L$ is just big. As long as the point $p$ lies in the ample locus $Amp(L)$ of $L$ (see Section \ref{Sectminimal} for the definition) the main results are still true given appropriate modifications. 

\begin{customthm}{A'} \label{bigmainthmvolume}
Let $L$ be big and assume that $p\in Amp(L)$. We then have that 
\begin{equation} \label{eqvol}
\textrm{vol}(L)=\int_{T_p X}MA(\phi_{L,p}).
\end{equation}
\end{customthm}

Here we have replaced the top selfintersection $(L^n)$ (which could be negative) with the volume $vol(L)$ while $MA(\phi_{L,p})$ stands for the non-pluripolar Monge-Amp\`ere measure of $\phi_{L,p}$ (since $\phi_{L,p}$ might not be locally bounded).

\begin{customthm}{B'} \label{bigmainthmsesh}
Let $L$ be big and assume that $p\in Amp(L)$. Then $$\epsilon_{mov}(X,L,p)=\sup\{\lambda: \lambda\ln(1+|z|^2)\leq \phi_{L,p}+O(1)\}.$$
\end{customthm}

Here $\epsilon_{mov}(X,L,p)$ denotes the moving Seshadri constant (see Section \ref{Sectsesh}), introduced by Nakamaye \cite{Nakamaye}. 

\begin{customthm}{C'} \label{bigmainthmok}
Let $L$ be big and assume that$p\in Amp(L)$. Then $\phi_{L,p}+O(1)$ completely determines all the infinitesimal Okounkov bodies $\Delta(L,V_{\bullet})$ at $p$.
\end{customthm}

If $L$ is big but not ample there are no K\"ahler forms in $c_1(L)$. Instead one can consider K\"ahler currents with analytic singularities. We can use these to define what it should mean for a K\"ahler form $\omega_0$ to fit into $(X,L)$ when $L$ is just big. 

\begin{definition} \label{defbigfit}
Let $\omega_0$ be a K\"ahler form on $\mathbb{C}^n$. We say that $\omega_0$ \emph{fits into $(X,L)$} if for any $R>0$ there exists a K\"ahler current $\omega_R$ with analytical singularities on $X$ in $c_1(L)$ together with a K\"ahler embedding $f_R$ of the ball $(B_R,{\omega_0}_{|B_R})$ into $(X,\omega_R)$. Here $B_R:=\{|z|<R\}\subseteq \mathbb{C}^n$ denotes the usual euclidean ball of radius $R$. If the embeddings $f_R$ all can be chosen to map the origin to some fixed point $p\in X$ we say that $\omega_0$ \emph{fits into $(X,L)$ at $p$}. 
\end{definition}

\begin{customthm}{D'} \label{bigmainthmfit}
Let $L$ be big and assume that $p\in Amp(L)$. Let also $\omega_0=dd^c\phi_0$ be a K\"ahler form on $\mathbb{C}^n$. If for some isomorphism $\mathbb{C}^n\cong T_p X$ we have that $\phi_0$ grows slower than $\phi_{L,p}$ then $\omega_0$ fits into $(X,L)$ at $p$.
\end{customthm}

\subsection{Outline of proofs} \label{secoutline}

\subsubsection{Ample case:} \label{Secamplecase}

We first discuss the proofs of Theorem \ref{mainthmvolume}, \ref{mainthmsesh}, \ref{mainthmok} and \ref{mainthmfit} where $L$ is assumed to be ample.

The proofs all rely on the use of singular positive metrics, especially with minimal singularities. 

Since $\phi_{L,p}$ is $S^1$-invariant one can split it into its $\lambda$-loghomogeneous components $\phi_{L,p}^{\lambda}$. A $\lambda$-loghomogeneous psh function of $T_p X$ is really the same thing as a singular positive metric of $\lambda \mathcal{O}(1)$ on $\mathbb{P}(T_p X).$ Let again $\pi:\tilde{X}\to X$ be the blowup of $p$ and $E$ the exceptional divisor. $E$ is naturally identified with $\mathbb{P}(T_p X)$. The first key observation is that there is a singular positive metric on $\pi^*L-\lambda E$ with minimal singularities which restricts to $\phi_{L,p}^{\lambda}$ on $E=\mathbb{P}(T_p X)$. Since the Seshadri constant $\epsilon(X,L,p)$ precisely measures when $\pi^*L-\lambda E$ fails to be nef, this can therefore be detected by $\phi_{L,p}^{\lambda}$ and hence $\phi_{L,p}$, leading to a proof of Theorem \ref{mainthmsesh}. 

For Theorem \ref{mainthmvolume} we consider the blowup $\Pi:\mathcal{X}\to X\times \mathbb{P}^1$ of $(p,0)\in  X\times \mathbb{P}^1.$ We also let $\mathcal{L}$ be the pullback of $L$ plus suitably chosen multiples of the exceptional divisor $\mathcal{E}$ and the pullback of $\mathcal{O}(1)$ from the base. The zero fiber has two components, $\mathcal{E}$ and $\tilde{X},$ that intersect along $E$ and $\mathcal{E}\setminus E$ is naturally isomorphic to $T_p X.$ We then prove that if we take a singular positive metric of $\mathcal{L}$ with minimal singularities and restrict it to $T_p X\cong \mathcal{E}\setminus E$ we get precisely $\phi_{L,p}$ up to a bounded term. To prove this we use Kiselman's minimum principle. Using a result of Hisamoto we then see that the restricted volume of $\mathcal{L}$ along the zero fiber is exactly $\int_{T_p X}MA(\Phi_{L,p})$ while the resticted volume along a generic fiber is the volume of $L$. It was proved independently by Boucksom-Favre-Jonsson and Lazarsfeld-Mustata that the resticted volume along a divisor only depends on the first Chern class of the divisor. Since in our case all fibers are cohomologuous Theorem \ref{mainthmvolume} follows. 

For Theorem \ref{mainthmfit} we use a Theorem of Berman which says that we can choose such a metric $\Phi$ with minimal singularities on $\mathcal{L}$ which is $C^{1,1}$ in a neighbourhood of $T_p X\subset \mathcal{X}$. If now $\phi_0$ is as in Theorem \ref{mainthmfit}, we can first arrange so that $\phi_0>\Phi$ on some large ball of $T_p X$ while by the growth assumption we must have $\phi_0<\Phi$ on $T_p X$ minus some larger ball $B_R$. We can extend $B_R$ to holomorphically embedded balls on nearby fibers $X_{\tau}$ of $\mathcal{X}$ and by the regularity of $\Phi,$ we still have that $\phi_0>\Phi$ on some large ball while $\phi_0<\Phi$ near the boundary of the ball. Therefore the regularized maximum of $\phi_0$ and $\Phi$ extends as a metric of $\mathcal{L}$ restricted to $X_{\tau}$, i.e. $L$ on $X$. In this process one can also make sure that the metric is in fact positive. This then proves Theorem \ref{mainthmfit}.

One can show that the total Monge-Amp\`ere mass of $\phi_{L,p}$ can be written as an integral of total Monge-Amp\`ere masses of $\phi_{L,p}^{\lambda}$ (thought of as metrics of $\lambda\mathcal{O}(1)$ on $\mathbb{P}^{n-1}$). Since $\phi_{L,p}^{\lambda}$ is the restriction of a metric with minimal singularities, by a result of Hisamoto we know that its total Monge-Amp\`ere mass is equal to the restricted volumes of $\pi^*L-\lambda E$ along $E$. On the other hand, by a theorem of Lazarsfeld-Musta\c{t}\u{a} we can express the volume of $L$ by the integral of these restricted volumes, which then proves Theorem \ref{mainthmvolume}.

To show Theorem \ref{mainthmok} we associate to the growth condition a graded linear series of polynomials on $T_p X$. The Okounkov body of the graded linear series can be easily seen to contain the corresponding infinitesimal Okounkov body of $L$. On the other hand we can use a result of Hisamoto \cite[Thm. 3]{Hisamoto}, which calculates the volume of a graded linear series as the total Monge-Amp\`ere mass of an associated equilibrium metric, to show that the volumes of the two Okounkov bodies coincide, so they must be equal.

\subsubsection{Big case:} Let us now discuss the proofs of Thereom \ref{bigmainthmvolume}, \ref{bigmainthmsesh}, \ref{bigmainthmok} and \ref{bigmainthmfit} where $L$ is assumed to be big and $p$ is assumed to lie in the ample locus $Amp(L)$ of $L$.

For Theorem \ref{bigmainthmsesh} we argue as in the proof of Theorem \ref{mainthmsesh}, only now the moving Seshadri constant $\epsilon_{mov}(X,L,p)$ rather measures when $E$ fails to lie in the ample locus of $\pi^*L-\lambda E$. It follows from Lemma \ref{lemmaboucksom} that this is detected by the minimal multiplicities of $\pi^*L-(\lambda+\delta) E$ on $E$ for $\delta>0$ small. As in the ample case these minimal multiplicities can be read from the $(\lambda+\delta)$-loghomogeneous components $\phi_{L,p}^{\lambda+\delta}$, leading to a proof of  Theorem \ref{bigmainthmsesh}.

The proof of Theorem \ref{bigmainthmvolume} is basically the same as for Theorem \ref{mainthmvolume}  (see Section \ref{Secamplecase}). We prove that also in this case, if we take a singular positive metric of $\mathcal{L}$ with minimal singularities and restrict it to $T_p X\cong \mathcal{E}\setminus E$ we get precisely $\phi_{L,p}$ up to a bounded term (see Theorem \ref{thmaltchar}). Importantly we also show that $\mathcal{E}$ intersects the ample locus of $\mathcal{L}$ (see Proposition \ref{propamploc}). Then the rest of the proof follows exactly as in the ample case since the results of Hisamoto, Boucksom-Favre-Jonsson and Lazarsfeld-Mustata on restricted volumes still apply.  

The proof of Theorem \ref{bigmainthmfit} also follows closely the proof of Theorem \ref{mainthmfit}. From Proposition \ref{propamploc} we see that $T_p X\subset \mathcal{X}$ is contained in the ample locus of $\mathcal{L}$ so we can use the result of Berman in the same way as in the ample case. The difference is that in the big case Theorem \ref{regthm} allows us to approximate not with positive metrics but with singular strictly positive metrics with analytic singularities. As this is what appears in Definition \ref{defbigfit} this leads to the proof of Theorem \ref{bigmainthmfit}.

For Theorem \ref{bigmainthmok} we again follow the proof of the ample case. Theorem \ref{bigmainthmsesh} and the fact that $\epsilon_{mov}(X,L,p)>0$ (since $p\in Amp(L)$) implies that the associated graded linear series of polynomials on $T_p X$ (see Section \ref{Secamplecase}) contains an ample series. Thus the standard results on Okounkov bodies and the result of Hisamoto \cite[Thm. 3]{Hisamoto} can still be applied. The proof of Theorem \ref{bigmainthmok} now proceeds exactly as in the ample case, only using Theorem \ref{bigmainthmvolume} instead of Theorem \ref{mainthmvolume}.

\subsection{Related work} \label{Section Okounkov}

The construction of the canonical growth condition $\phi_{L,p}+O(1)$ is inspired by the construction of Okounkov bodies, and is in particular related to the Chebyshev transform. In \cite{DWN} we showed how to any smooth metric $\phi$ of $L$ associate a convex function $c[\phi]$ on $\Delta(L)^{\circ},$ called the Chebyshev transform of $L.$ The metric volume of $(L,\phi)$ can then be computed as $$\textrm{vol}(L,\phi)=n!\int_{\Delta^{\circ}}(c[\phi_0]-c[\phi])dx,$$ where $\phi_0$ is a reference metric (see \cite{DWN}) (note that the metric volume is refered to as the energy at equilibrium in \cite{BB10}, and that when $\phi$ is positive this coincides with what is known as the relative Monge-Amp\`ere energy of $\phi$ with respect to $\phi_0$).

If $s_{hom}=\sum_{|\alpha|=ord_p(s)}a_{\alpha}z^{\alpha}$ is the leading homogeneous part of $s$ let $s_{mon}=a_{\alpha}z^{\alpha}$ denote the leading monomial part of $s,$ i.e. where $\alpha$ is minimal with respect to the additive order used in the definition of the Okounkov body. Recall that $B_1(kL,k\phi)$ was the unit ball in $H^0(X,kL)$ with respect to the supremum norm $||.||_{k\phi,\infty}$. We then define $$\phi_{mon}:=\sup^*\left\{\frac{1}{k}\ln|s_{mon}|^2: s\in B_1(kL,k\phi)\right\},$$ which will be a psh function on $\mathbb{C}^n.$ Note the similarity to how $\phi_{L,p}$ was defined. Since $s_{mon}$ is a monomial, $(1/k)\ln|s_{mon}|^2$ and hence $\phi_{mon}$ is $(S^1)^n$-invariant. Thus if we write $\phi_{mon}(z)=u(x),$ $x_i:=\ln|z_i|^2,$ then $u$ is convex. Clearly $(1/k)\ln|s_{mon}|^2(x)=(1/k)(x\cdot\alpha+\ln|a_{\alpha}|^2)$ which shows that image of the gradient of $u$ is $\Delta(L)^{\circ}$ (plus possibly part of its boundary). If we then take the Legendre transform of $u$ we get a convex function on $\Delta(L)^{\circ}$, which is precisely the Chebyshev transform $c[\phi].$ We leave this as an easy exercise for the interested reader.  

This paper also relates to work that have investigated the way in which the geometry of $(X,L)$ is encoded in the various Okounkov bodies of $L$. Some of it has already been mentioned in the introduction, e.g. the foundational work of Kaveh-Khovanskii \cite{Kaveh1,Kaveh2} and Lazarsfeld-Mustata \cite{LazMus} and the more recent work of K\"uronya-Lozovanu \cite{KL15a,KL15b}. We also want to mention the important work of Anderson \cite{Anderson}, which uses Okounkov bodies to construct toric degenerations, and the work of Harada-Kaveh \cite{HarKav} and Kaveh \cite{Kaveh3} which relates Okounkov bodies with the symplectic geometry of $(X,\omega)$, where $\omega$ is some K\"ahler form in $c_1(L)$ (note that it does not matter which K\"ahler form $\omega\in c_1(L)$ one uses since by Moser's trick all such K\"ahler manifolds are symplectomorphic). 

Anderson showed in \cite{Anderson} how, given some assumptions, the data generating the Okounkov body also gives rise to a degeneration of $(X,L)$ into a possibly singular toric variety $(X_{\Delta},L_{\Delta})$, where $\Delta=\Delta(L)$ (the assumptions force $\Delta(L)$ to be a polytope, which is not the case in general). In their important work \cite{HarKav} Harada-Kaveh used this to, under the same assumptions, to construct a completely integrable system $\{H_i\}$ on $(X,\omega)$, with $\omega$ a K\"ahler form in $c_1(L)$, such that $\Delta(L)$ precisely is the image of the moment map $\mu:=(H_1,...,H_n)$. More precisely, they find an open dense subset $U$ and a Hamiltonian $(S^1)^n$-action on $(U,\omega)$ such that the corresponding moment map $\mu:=(H_1,...,H_n)$ extends continuously to the whole of $X$.  The construction used the gradient-Hamiltonian flow introduced by Ruan \cite{Ruan}. 
 
Building on \cite{HarKav}, Kaveh shows in the recent work \cite{Kaveh3} that even without the previous assumption, Okounkov body data can be used to gain insight into the symplectic geometry of $(X,\omega)$, where $\omega$ is some K\"ahler form in $c_1(L)$.

In short, Kaveh constructs symplectic embeddings $f_k:((\mathbb{C}^*)^n,\omega_k) \hookrightarrow (X,\omega)$ where $\omega_{k}$ are $(S^1)^n$-invariant K\"ahler forms that depend on data related to a certain nonstandard Okounkov body $\Delta(L)$ (i.e. the order on $\mathbb{N}^n$ used is not the lexicographic one). As $k$ tends to infinity the image of the corresponding moment map will fill up more and more of $\Delta(L)$, showing that the symplectic volume of $((\mathbb{C}^*)^n,\omega_k)$ approaches that of $(X,\omega)$. Just as in \cite{HarKav} the construction uses the gradient-Hamiltonian flow introduced by Ruan \cite{Ruan}, and is thus fundamentally symplectic in nature. The main application is to study the Gromov width of $(X,\omega)$. 

We see that this is similar to our Theorem \ref{mainthmfit}, but a key difference is that in the work of Kaveh the embeddings are only symplectic. However, in a companion paper to this one, we prove a K\"ahler version of the above mentioned result of Kaveh \cite{DWN2}. Namely, the toric K\"ahler forms $\omega_k$ actually extend to $\mathbb{C}^n$ and they all fit into $(X,L)$ at $p$. Interestingly, the way in which they fit into $(X,L)$ is fundamentally different from the way it happens in this paper.  In \cite{DWN2} different directions are scaled at different rates, corresponding to a local toric deformation which depends on the Okounkov body data and hence the flag, while in the present paper the local coordinates are being uniformly scaled (corresponding to the standard blowup).

\subsection*{Acknowledgements}
I wish to thank Julius Ross for many fruitful discussions relating to the topic of this paper. I also want to thank the anonymous referees for their comments which helped me improve the quality of the paper. 

During the preparation of this paper the author has received funding from the People Programme (Marie Curie Actions) of the European Union's Seventh Framework Programme (FP7/2007-2013) under REA grant agreement no 329070.

\section{Preliminaries}

\subsection{Singular positive metrics}

Let $U_i$ be an open cover of $X$ together with a collection of transition functions $g_{ij}$ which defines a holomorphic line bundle $L$. A smooth (hermitian) metric of $L$ is then given by a collection of smooth functions $\phi_i$ on $U_i$ such that on each intersection $U_i\cap U_j$ we have that $$\phi_i=\phi_j+\ln|g_{ij}|^2.$$ If each of the functions $\phi_i$ are also strictly plurisubharmonic it is called a positive metric. The space of positive metrics on $L$ is denoted by $\mathcal{H}(X,L).$

The curvature form $dd^c\phi$ of a metric $\phi$ is on each $U_i$ defined by $$dd^c\phi:=dd^c\phi_i,$$ where we recall that $dd^c:=(i/2\pi)\partial \bar{\partial}.$ Note that since $dd^c\ln|g_{ij}|^2=0$ this gives a well defined form on $X.$ We see that $\phi$ is positive iff $dd^c\phi$ is a K\"ahler form.

If $L$ is ample, then it is easy to see that it has a positive metric, and the Kodaira Embedding Theorem says that the converse also is true.

A weaker notion than positive metric is that of a singular positive metric. A singular positive metric $\phi$ is by definition a collection of plurisubharmonic functions $\phi_i$ on $U_i$ such that on each intersection $U_i\cap U_j$ we have that $$\phi_i=\phi_j+\ln|g_{ij}|^2.$$

Note that by this definition a positive metric is a singular positive metric, but not necessarily vice versa.

The set of singular positive metrics on $L$ is denoted by $PSH(X,L).$ $L$ is said to be pseudoeffective if it has a singular positive metric (not identically equal to $-\infty$).

For ease of notation we will usually not distinguish between $\phi$ and its local representatives $\phi_i.$

The curvature form $dd^c\phi$ of a singular positive metric $\phi\in PSH(X,L)$ is on each $U_i$ defined by $$dd^c\phi:=dd^c\phi_i.$$ As for positive metrics these coincide on the intersections, so we get a well defined closed positive $(1,1)$-current on $X.$ If $dd^c\phi$ dominates some K\"ahler form then $dd^c\phi$ is called a K\"ahler current and $\phi$ is said to be strictly positive. 

If $s$ is a holomorphic section of $L,$ then on each $U_i$ we can represent $s$ as a holomorphic function $f_i,$ and we know that on each intersection $f_i=g_{ij}f_j$ and so $$\ln|f_i|^2=\ln|f_j|^2+\ln|g_{ij}|^2.$$ Thus the collection of psh functions $\ln|f_i|^2$ defines a singular positive metric of $L,$ which we denote by $\ln|s|^2.$ 

By a $\mathbb{R}$-line bundle on $X$ we mean a formal real combination of holomorphic line bundles $L$ on $X$: $$F:=\sum_k a_k L_k.$$ When each coefficient $a_k$ is rational we call $F$ a $\mathbb{Q}$-line bundle. If $U_{ij}$ is a common trivializing open cover and $g_{ij}^k$ are the transition functions for $L_k$ respectively, then a singular positive metric $\phi$ of $F$ is a collection   psh function $\phi_i$ on $U_i$ such that on each intersection $U_i\cap U_j:$ $$\phi_i=\phi_j+\sum_k a_k\ln|g_{ij}^k|^2.$$ The set of singular positive metrics of $F$ is denoted by $PSH(X,F)$ and $F$ is said to be pseudoeffective if it has a singular positive metric (not identically equal to $-\infty$).

Let $A$ be an ample line bundle. An $\mathbb{R}$-line bundle $F$ is said to be big if for some $\epsilon>0,$ $F-\epsilon A$ is pseudoeffective. This is equivalent to $F$ having a singular strictly positive metric (see e.g. \cite{BEGZ}).

We say that a singular positive metric $\psi\in PSH(X,F)$ has analytic singularities if it locally can be written as $$\psi=c\ln (\sum_i |f_i|^2),$$ where $c$ is some positive constant and $f_i$ are local holomorphic functions.

A point $x$ is said to lie in the ample locus of $F$, denoted by $Amp(F)$, if there is some singular strictly positive metric $\phi$ of $F$ which is smooth near $x$. The complement is known as the augmented base locus, denoted by $\mathbb{B}_+(F).$

\subsection{Minimal singularities and minimal multiplicities} \label{Sectminimal}

If $\phi,\psi\in PSH(X,F)$ we say that $\phi$ is less singular than $\psi$ if $\phi\geq \psi+O(1).$ A singular positive metric is said to have minimal singularities if it is less singular than any element in $PSH(X,F)$. One can easily show with an envelope construction that any pseudoeffective ($\mathbb{R}$-) line bundle $F$ has a singular positive metric with minimal singularities. Note however that when $F$ is big, a metric with minimal singularities is far from unique).

Let us now recall Boucksom's notion of minimal multiplicities of a big $(\mathbb{R})$-line bundle $F$ (see \cite{Boucksom}). 

\begin{definition}
For any point $x\in X$ we define the minimal multiplicity $\nu_x(F)$ of $F$ at $x$ to be the Lelong number $\nu_x(\psi)$ of $\psi\in PSH(X,F)$ at $x$ for any singular positive metric $\psi$ of $F$ with minimal singularities. If $Y$ is a subvariety we let $$\nu_Y(F):=\inf_{x\in Y}\{\nu_x(F)\}.$$ 
\end{definition}

From Siu decomposition \cite{Siu} it follows that $\nu_x(F)=\nu_Y(F)$ for a very general point $x\in Y.$

Let $A$ be some ample line bundle on $X.$ The ample locus of a pseudoeffective line bundle $F,$ denoted by $Amp(F),$ is defined as $$Amp(F):=\{x\in X: \exists \epsilon>0, \nu_x(F-\epsilon A)=0 \}.$$ Equivalently it is the set of points $x$ so that $F$ can be written as $A+D$ where $A$ is ample and $D$ is an effective divisor not containing $x$.

\begin{lemma} \label{lemmaboucksom}
Let $F$ and $G$ be two big $(\mathbb{R})$-line bundles and $x$ a point in $X$. If $\nu_x(F)=0$ If $x\in Amp(G)$ then for all $t\in(0,1)$ we have that $x\in Amp(tF+(1-t)G)$. In particular $F$ is big and nef iff $F$ is big and $\nu_x(F)=0$ for all $x\in X$.  
\end{lemma}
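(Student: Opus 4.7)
My plan is to establish the first assertion by forming an explicit convex combination of minimal singularities metrics, and then to deduce the ``in particular'' statement by specializing $G$ to be an ample line bundle.

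For the main assertion, the hypothesis $x\in Amp(G)$ gives, by the definition recalled just above the lemma, an $\epsilon>0$ with $G-\epsilon A$ pseudoeffective and $\nu_x(G-\epsilon A)=0$, where $A$ is the fixed ample line bundle. I would then choose a minimal singularities metric $\psi_F$ on $F$ with $\nu_x(\psi_F)=\nu_x(F)=0$, and similarly a minimal singularities metric $\psi_{G-\epsilon A}$ on $G-\epsilon A$ with $\nu_x(\psi_{G-\epsilon A})=0$. The convex combination
\[
\psi_t := t\psi_F + (1-t)\psi_{G-\epsilon A}
\]
transforms correctly as a singular positive metric on the $\mathbb{R}$-line bundle $tF+(1-t)(G-\epsilon A) = tF+(1-t)G-(1-t)\epsilon A$, because both the metric transformation law and plurisubharmonicity are preserved under positive linear combinations. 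Since Lelong numbers are additive under positive linear combinations of psh functions, $\nu_x(\psi_t)=0$. Minimal singularities metrics minimize Lelong numbers among all singular positive metrics of a given pseudoeffective class, so this forces $\nu_x(tF+(1-t)G-(1-t)\epsilon A)=0$; taking $\delta:=(1-t)\epsilon>0$ gives precisely $x\in Amp(tF+(1-t)G)$.

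For the converse direction of the ``in particular'' statement, I would take $G=A$, so that $Amp(A)=X$ trivially; the main assertion then yields $x\in Amp(tF+(1-t)A)$ for every $x\in X$ and every $t\in(0,1)$, i.e.\ $\mathbb{B}_+(tF+(1-t)A)=\emptyset$. For a big $\mathbb{R}$-line bundle, vanishing augmented base locus is equivalent to ampleness, so $tF+(1-t)A$ is ample for every $t\in(0,1)$. Rescaling shows $F+sA$ is ample for all $s>0$, which is the definition of nefness for $F$. The forward direction, $F$ big and nef implies $\nu_x(F)=0$ for all $x$, I would deduce from Boucksom's characterization of the diminished (non-nef) base locus of a pseudoeffective $\mathbb{R}$-line bundle as the set $\{x:\nu_x(F)>0\}$, together with the fact that nefness is equivalent to this locus being empty.

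The conceptual core of the argument is formal once one accepts that Lelong numbers of minimal singularities metrics combine linearly under convex combinations of classes; the only genuine obstacle is the forward direction of the ``in particular'' statement, which requires Boucksom's theorem identifying vanishing minimal multiplicities with nefness and is best handled by citation rather than reproof.
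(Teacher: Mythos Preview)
Your proof is correct. The paper's own argument is shorter and uses a slightly different decomposition: rather than invoking a minimal singularities metric on $G-\epsilon A$, the paper uses the equivalent algebraic characterization of the ample locus recalled just before the lemma, writing $G=A+D$ with $A$ ample and $D$ effective not passing through $x$. One then has $tF+(1-t)G=tF+(1-t)A+(1-t)D$; since $\nu_x(F)=0$ and $(1-t)A$ is ample, $x$ lies in the ample locus of $tF+(1-t)A$, and adding the effective divisor $(1-t)D$ away from $x$ preserves this. Your approach stays entirely on the analytic side (convex combinations of psh metrics and additivity of Lelong numbers), while the paper's stays on the algebraic side (ample plus effective); they are two faces of the same coin, and each is equally short once the relevant characterization of $Amp(\cdot)$ is in hand.

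The paper does not spell out the ``in particular'' clause at all, so your treatment of it---specializing $G=A$, deducing ampleness of $tF+(1-t)A$ from $\mathbb{B}_+=\emptyset$, and citing Boucksom for the forward implication---is more complete than what appears in the paper.
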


\begin{proof}
If $G\in Amp(F)$ it means that we can write $G=A+D$ where $A$ is ample and $D$ is an effective divisor not containing $x$. We write $tF+(1-t)G=tF+(1-t)A+(1-t)D$. Since $\nu_x(F)=0$ it follows that $x$ is in the ample locus of $tF+(1-t)A$ and hence of $tF+(1-t)A+(1-t)D$ since $D$ did not contain $x$.   
\end{proof}

\begin{theorem} \label{thmregularity}
If $F$ is a big ($\mathbb{Q}$-) line bundle then it has a singular positive metric $\psi$ with minimal singularities which is $C^{1,1}$ on $Amp(F).$
\end{theorem}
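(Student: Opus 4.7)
The plan is to construct $\psi$ as an upper envelope of admissible metrics and then establish its $C^{1,1}$ regularity on $Amp(F)$ by approximation through Bergman envelopes, in the spirit of Berman.

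After clearing denominators I may assume $F$ is a genuine line bundle. Fix a smooth (not necessarily positive) reference metric $\phi_0$ of $F$ and define
\[
\psi := \sup{}^{*}\{\phi \in PSH(X,F) : \phi \leq \phi_0\}.
\]
Standard envelope theory shows $\psi \in PSH(X,F)$, and $\psi$ has minimal singularities: for any $\phi' \in PSH(X,F)$ the smoothness of $\phi_0$ and compactness of $X$ force $C := \sup_X(\phi' - \phi_0)$ to be finite, so $\phi' - C$ lies in the admissible class and hence $\phi' \leq \psi + C$. This takes care of everything except the regularity claim.

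For the $C^{1,1}$ regularity I would approximate $\psi$ by the Bergman envelopes
\[
\psi_k := \tfrac{1}{k}\log \sup\!\bigl\{|s|^{2} : s \in H^0(X,kF),\ \|s\|_{k\phi_0,\infty} \leq 1\bigr\},
\]
which are singular positive metrics of $F$ bounded above by $\phi_0$, so $\psi_k \leq \psi$, and classical results give $\psi_k \to \psi$ locally uniformly on $Amp(F)$. The core step is to produce $k$-uniform $C^{1,1}$ bounds on any $U \Subset Amp(F)$. Fixing $x_0 \in U$, write $F = A + D$ with $A$ ample and $D$ an effective $\mathbb{Q}$-divisor not meeting a neighbourhood of $x_0$. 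The Ohsawa-Takegoshi extension theorem, applied to $kA$ tensored with a fixed defining section of $D$, produces for each $x \in U$ and each $k$ a section $s_{x,k} \in H^0(X,kF)$ peaked at $x$ with a $k$-uniform $L^\infty$-to-$L^2$ comparison. Feeding this into the local Hessian computation for the logarithm of a sum of squares of holomorphic functions yields
\[
-C\omega \leq dd^c \psi_k \leq C\omega \quad\text{on } U,
\]
with $C$ independent of $k$.

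Passing to the limit, the $\psi_k$ converge locally uniformly to $\psi$ on $Amp(F)$ while being uniformly bounded in $C^{1,1}(U)$, so extracting a weak-$*$ limit of their second derivatives shows $\psi \in C^{1,1}_{\mathrm{loc}}(Amp(F))$. The main obstacle is the upper bound $dd^c\psi_k \leq C\omega$: the lower bound is free from plurisubharmonicity, but the matching upper bound is the heart of Berman's regularity theorem, and it is exactly here that the ample locus enters, since only there does one have enough room to extend holomorphic jets with $L^2$ constants independent of $k$.
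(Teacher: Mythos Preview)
Your construction of the envelope $\psi = P(\phi_0)$ and the argument that it has minimal singularities coincide exactly with the paper's proof. For the $C^{1,1}$ regularity on $Amp(F)$ the paper simply cites Berman's theorem \cite[Thm.~3.4]{Berman} without further argument, whereas you sketch the mechanism behind that result via Bergman envelopes $\psi_k$ and uniform second-derivative bounds; your outline is sound in spirit and you correctly isolate the upper bound $dd^c\psi_k \le C\omega$ on compacta of the ample locus as the essential difficulty, which is precisely the content of the cited theorem. So the approaches are the same --- you have just unpacked one level deeper into the black box the paper invokes.
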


\begin{proof}
Pick a smooth but not necessarily positive metric $\phi.$ Then define $$P(\phi):=\sup\{\psi\leq \phi: \psi\in PSH(X,F)\}.$$ It is easy to see that $P(\phi)$ is a singular positive metric with minimal singularities, and a theorem of Berman, \cite[Thm. 3.4]{Berman}, asserts that $P(\phi)$ is $C^{1,1}$ on $Amp(F).$  
\end{proof}

\subsection{Regularization of metrics}

A fundamental result of Demailly states that any singular positive metric can be well approximated by metrics with analytic singularities.

\begin{theorem} \label{regthm}
Let $\psi\in PSH(X,F)$ and let $\phi$ be a smooth positive metric of an ample line bundle $A.$ Then there exists a sequence of metrics $\psi_m\in PSH(X,L+\frac{1}{m} A)$ with analytic singularities such that $$\psi(x)<\psi_m(x)-\frac{1}{m}\phi(x)\leq \sup_{|\zeta-x|<r}\psi(\zeta)+C\left(\frac{|\ln r|}{m}+r+m^{-1/2}\right)$$ with respect to some open cover.
\end{theorem}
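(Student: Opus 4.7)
The plan is to follow Demailly's Bergman kernel regularization. Assuming $F$ is a $\mathbb{Q}$-line bundle (the $\mathbb{R}$-case can be handled by approximation with rational classes), for every $m$ clearing the denominators I would introduce the Hilbert space $\mathcal{H}_m := H^0(X, mF+A)$ equipped with the weighted $L^2$ inner product
\[
\langle s, t \rangle_m := \int_X s\,\bar{t}\, e^{-m\psi - \phi}\, dV,
\]
where $dV$ is a fixed smooth volume form and $m\psi + \phi$ is understood as a singular positive metric on $mF+A$. Picking an orthonormal basis $(s_j)$, I would set
\[
\psi_m := \frac{1}{m}\log\Bigl(\sum_j |s_j|^2\Bigr).
\]
Since $mF+A$ has a strictly positive smooth metric (the piece $\phi$ gives the strict positivity), $\mathcal{H}_m$ is finite-dimensional and $\psi_m$ is manifestly a singular positive metric on $F + \frac{1}{m}A$ with analytic singularities along the common zero locus of the $s_j$.

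For the lower bound I would invoke the Ohsawa-Takegoshi extension theorem, applied to the singleton $\{x_0\}$: for every $x_0$ where $\psi(x_0) > -\infty$, extension produces a section $s \in \mathcal{H}_m$ whose value at $x_0$ satisfies $|s(x_0)|^2 e^{-m\psi(x_0) - \phi(x_0)} \geq c_0$, while $\|s\|_m^2 \leq C_0$, with constants $c_0, C_0$ uniform in $x_0$ and $m$ (the positivity of the twist $A$ is exactly what provides the curvature hypothesis). Using the extremal characterization of the Bergman function,
\[
\sum_j |s_j(x_0)|^2 \geq \frac{|s(x_0)|^2}{\|s\|_m^2} \geq \frac{c_0}{C_0}\, e^{m\psi(x_0) + \phi(x_0)},
\]
which after taking $\frac{1}{m}\log$ gives $\psi_m - \frac{1}{m}\phi \geq \psi - \frac{C}{m}$, and hence the strict lower bound after adjusting constants on an appropriate chart cover.

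For the upper bound I would apply the submean value inequality on small coordinate balls. For any holomorphic $s$ we have $|s(x)|^2 \leq C r^{-2n}\int_{B_r(x)} |s|^2\, dV$, and inserting the weight gives
\[
|s(x)|^2 \leq C r^{-2n}\, e^{m \sup_{B_r(x)}\psi + \sup_{B_r(x)}\phi}\, \|s\|_m^2.
\]
Summing this bound over an orthonormal basis, the Bergman function satisfies $\sum_j |s_j(x)|^2 \leq C N_m r^{-2n} e^{m\sup_{B_r(x)}\psi + \sup_{B_r(x)}\phi}$, where $N_m = \dim \mathcal{H}_m = O(m^n)$. Taking $\tfrac{1}{m}\log$ yields
\[
\psi_m(x) - \tfrac{1}{m}\phi(x) \leq \sup_{|\zeta - x|<r}\psi(\zeta) + \tfrac{1}{m}\bigl(\sup_{B_r}\phi - \phi(x)\bigr) + C\Bigl(\tfrac{|\log r|}{m} + \tfrac{\log m}{m}\Bigr),
\]
and smoothness of $\phi$ absorbs the middle term into an $O(r)$ error. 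The $\log m / m$ contribution is then dominated by $m^{-1/2}$, producing the stated upper bound.

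The main analytic obstacle is of course the Ohsawa-Takegoshi step, which requires a careful choice of ambient weight so that the curvature hypotheses hold uniformly in $m$ (this is why one twists by the ample $A$ and writes $mF+A$ rather than $mF$). The other delicate point is ensuring that constants such as $c_0$, $C_0$, the Bergman dimension growth, and the curvature of $\phi$ combine uniformly over $X$ and $m$; this is done by a finite trivializing cover on which $\phi$ is bounded and all computations are performed in standard coordinates, which is the origin of the phrase \emph{``with respect to some open cover''} in the statement.
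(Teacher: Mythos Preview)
The paper does not give its own proof of this theorem: it is stated as a citation of Demailly's regularization theorem \cite{Dem92}, with only the remark that the original is formulated for quasi-psh functions on compact complex manifolds. Your proposal is in fact a faithful sketch of Demailly's original argument (Bergman kernel approximation, with Ohsawa--Takegoshi supplying the lower bound and the submean value inequality plus the dimension count $N_m=O(m^n)$ supplying the upper bound), so there is nothing to compare against within the paper itself.

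Your outline is correct in its essentials. Two minor points worth tightening if you write it out in full: first, when $F$ is a $\mathbb{Q}$-line bundle, $mF$ is only an honest line bundle along a subsequence of $m$'s divisible by the denominator, so either restrict to that subsequence or clear denominators at the outset and rescale; second, the constant in the Ohsawa--Takegoshi step depends on the curvature of the twist, and one should check that twisting by the fixed ample $(A,\phi)$ (rather than something growing with $m$) already suffices to make the curvature hypothesis uniform --- this is standard but is the place where care is needed.
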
 

\begin{remark}
Note that in \cite{Dem92} the above result is given in the more general setting of quasi psh functions on compact complex manifolds.
\end{remark}

We will later have use of the following Corollary.

\begin{corollary} \label{regcor}
Let $L$ be an ample line bundle and $\psi\in PSH(X,L)$ be a singular positive metric which is $C^1$. Then there is a sequence of positive metrics $\phi_m\in \mathcal{H}(X,L)$ converging uniformly to $\psi.$ 

More generally, if $F$ is big and $\psi\in PSH(X,F)$ has minimal singularities and is locally $C^1$ on $Amp(F)$, then there is a sequence of $\phi_m\in PSH(X,F)$ with analytic singularities such that $dd^c\phi_m$ are all K\"ahler currents and $\phi_m$ converges to $\psi$ uniformly on compacts of $Amp(F).$
\end{corollary}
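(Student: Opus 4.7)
The plan is to derive both parts from Theorem \ref{regthm}, turning its pointwise estimate into a uniform one via the $C^1$ hypothesis on $\psi$. Take $A := L$ in the ample case and any fixed ample $A$ in the big case; let $\phi$ be a smooth positive metric on $A$ and set $\omega := dd^c\phi$ (K\"ahler). Theorem \ref{regthm} produces $\psi_m \in PSH(X, L + \tfrac{1}{m}A)$ (resp.\ $PSH(X, F + \tfrac{1}{m}A)$) with analytic singularities satisfying
$$\psi(x) < \psi_m(x) - \tfrac{1}{m}\phi(x) \leq \sup_{|\zeta-x|<r}\psi(\zeta) + C\bigl(|\ln r|/m + r + m^{-1/2}\bigr).$$
The $C^1$ hypothesis yields the Lipschitz bound $\sup_{|\zeta-x|<r}\psi(\zeta) \leq \psi(x) + K_\psi r$, uniformly on $X$ in the ample case and on any fixed compact $K \subset Amp(F)$ in the big case. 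Choosing $r = 1/\sqrt{m}$ makes the error $O(m^{-1/2}\ln m)$ and forces $\psi_m - \tfrac{1}{m}\phi \to \psi$ uniformly on the relevant set.

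For the ample case, the boundedness of $\psi$ forces $\psi_m$ to be globally bounded. A metric with analytic singularities locally of the form $c\log\sum_i|f_i|^2$ that is bounded below must have the $f_i$ with no common zeros (otherwise $\log\sum_i|f_i|^2\to-\infty$), hence $\psi_m$ is in fact smooth. Rescaling, $\tilde\phi_m := \tfrac{m}{m+1}\psi_m \in PSH(X,L)$ is smooth and converges uniformly to $\psi$, although its curvature is only semi-positive. I would restore strict positivity by a small perturbation,
$$\phi_m := (1-\epsilon_m)\tilde\phi_m + \epsilon_m \phi \in \mathcal{H}(X,L), \qquad \epsilon_m \to 0,$$
so that $dd^c\phi_m \geq \epsilon_m\omega > 0$ is K\"ahler while uniform convergence $\phi_m \to \psi$ is preserved.

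For the big case, the main obstacle is that $\psi_m$ lives on $F + \tfrac{1}{m}A$ rather than on $F$, and subtracting $\tfrac{1}{m}\phi$ to land on $F$ destroys positivity: $dd^c(\psi_m - \tfrac{1}{m}\phi) \geq -\tfrac{1}{m}\omega$. To fix this, for a given compact $K \subset Amp(F)$ I would choose an auxiliary $\chi \in PSH(X,F)$ smooth on $K$ with $dd^c\chi \geq c\omega$ for some $c>0$ (such $\chi$ exists from $K \subset Amp(F)$ by combining local Kodaira-type decompositions $F = A_x + D_x$, giving $\phi_{A_x} + \log|s_{D_x}|^2$ smooth near $x$, via a regularized maximum over a finite subcover of $K$) and set
$$\phi_m := (1-\delta_m)\bigl(\psi_m - \tfrac{1}{m}\phi\bigr) + \delta_m \chi, \qquad \delta_m := \tfrac{2}{cm}.$$
A direct computation yields $dd^c\phi_m \geq \tfrac{1}{m}\omega$, so $\phi_m \in PSH(X,F)$ has K\"ahler current curvature and analytic singularities (inherited from the two summands). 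Uniform convergence $\phi_m \to \psi$ on $K$ follows from uniform convergence of the first term, boundedness of $\chi-\psi$ on $K$, and $\delta_m \to 0$; a diagonal argument over an exhaustion $K_1 \subset K_2 \subset \cdots$ of $Amp(F)$ then yields uniform convergence on every compact of $Amp(F)$. This compact-dependent construction, needed to correct Demailly's overshoot on the line bundle while preserving the K\"ahler current property, is the key subtlety of the proof.
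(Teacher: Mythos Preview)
Your approach is correct and follows essentially the same line as the paper's proof: apply Theorem~\ref{regthm}, use the $C^1$ hypothesis to turn the local sup into a uniform error, and then rescale/convex-combine to land back on the correct line bundle with strict positivity. The paper does the ample case in one step via $\phi_m:=(1+\tfrac{2}{m})^{-1}(\psi_m+\tfrac{1}{m}\phi)$ (with $A=L$, $\phi\in\mathcal{H}(X,L)$), which already lies in $\mathcal{H}(X,L)$ since $dd^c\phi_m\ge(m+2)^{-1}\omega$, whereas you split this into the rescaling $\tilde\phi_m=\tfrac{m}{m+1}\psi_m$ followed by the perturbation $(1-\epsilon_m)\tilde\phi_m+\epsilon_m\phi$; these are equivalent. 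The paper takes $r=e^{-\sqrt{m}}$ rather than your $r=1/\sqrt m$; both give an $O(m^{-1/2})$ error (your stated bound $O(m^{-1/2}\ln m)$ is a harmless overestimate).

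For the big case the paper simply says ``proved similarly''; your write-up fills in the genuine extra step, namely producing an auxiliary $\chi\in PSH(X,F)$ with $dd^c\chi\ge c\omega$ and smooth on a given compact $K\subset Amp(F)$, and then mixing it in to compensate for the $-\tfrac{1}{m}\omega$ loss. This is exactly the kind of argument the paper has in mind. One small point worth tightening: to ensure $\phi_m$ literally has analytic singularities in the sense of the paper (locally $c\ln\sum|f_i|^2$), take $\delta_m$ rational so that the convex combination of $\psi_m-\tfrac{1}{m}\phi$ and $\chi$ keeps that form; this is cosmetic and does not affect the argument.
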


\begin{proof}
For the first assertion, let $A=L$ and $\phi\in \mathcal{H}(X,L)$ in Theorem \ref{regthm}. Since $\psi$ is $C^1$ it follows that $\psi_m$ must be smooth, and so $$\phi_m:=\left(1+\frac{2}{m}\right)^{-1}\left(\psi_m+\frac{1}{m}\phi\right)\in \mathcal{H}(X,L).$$ $\psi$ being $C^1$ also implies that $$\sup_{|\zeta-x|<r}\psi(\zeta)<\psi(x)+\delta(r)$$ where $\delta(r)\to 0$ as $r\to 0.$ From Theorem \ref{regthm} we get that $$\psi(x)+\frac{2}{m}\phi(x)<\left(1+\frac{2}{m}\right)\phi_m\leq \psi(x)+\frac{2}{m}\phi(x)+\delta(r)+C\left(\frac{|\ln r|}{m}+r+m^{-1/2}\right).$$ Letting e.g. $r=e^{-\sqrt{m}}$ shows that $\phi_m$ converges to $\psi$ uniformly. 

The general assertion is proved similarly.
\end{proof}

\begin{remark}
The first assertion is actually a special case of the easier Richberg's Theorem \cite{Ric68} (see also \cite[Lem. 3.2]{Dem92}), but for the conveniece of the reader we provide the argument using Theorem \ref{regthm} here. Also, the assumption that the metrics should be $C^1$ could be dropped by using Dini's Theorem.
\end{remark}

\subsection{Approximation of metrics using sections}

Let $L$ be a big line bundle and $\phi$ a smooth metric. Again let $$P(\phi):=\sup\{\psi\leq \phi: \psi\in PSH(X,L)\}.$$ If $s\in H^0(X,kL)$ we know that $1/k\ln |s|^2\in PSH(X,L).$ The following Proposition is then well-known to experts.

\begin{proposition} \label{approxsect}
We have that 
\begin{equation} \label{sectionenv}
P(\phi)=\sup^*\{\frac{1}{k}\ln|s|^2\leq \phi: s\in H^0(X,kL), k\in \mathbb{N}\}.
\end{equation}
\end{proposition}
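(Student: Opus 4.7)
Let $\Phi$ denote the right-hand side envelope of (\ref{sectionenv}). The plan is to establish $\Phi = P(\phi)$ by proving two inequalities in opposite directions.

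The direction $\Phi \leq P(\phi)$ is essentially tautological. For every $s \in H^0(X, kL)$ the function $\tfrac{1}{k}\ln|s|^2$ belongs to $PSH(X, L)$ by the discussion in the preliminaries; if it moreover satisfies $\tfrac{1}{k}\ln|s|^2 \leq \phi$, then it is an admissible competitor in the envelope defining $P(\phi)$, so $\tfrac{1}{k}\ln|s|^2 \leq P(\phi)$. Taking the supremum over all such sections and then the usc regularization preserves this inequality, giving $\Phi \leq P(\phi)$.

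For the reverse inequality, the plan is to approximate an arbitrary $\psi \in PSH(X, L)$ with $\psi \leq \phi$ from below, at each sufficiently generic point, by rescaled logarithms of sections. Fix a point $x_0$ where $\psi(x_0)$ is finite. After first reducing to the case in which $\psi$ has analytic singularities by applying Demailly's regularization (Theorem \ref{regthm}) and letting the perturbation parameter tend to zero, one can arrange $x_0$ to lie outside the singular locus of $\psi$. The key input is the Ohsawa--Takegoshi $L^2$ extension theorem applied to the weight $k\psi$ on $kL$, which produces, for all sufficiently large $k$, a section $s_k \in H^0(X, kL)$ with a lower bound of the form $|s_k(x_0)|^2 \geq e^{k\psi(x_0) - O(\log k)}$ together with the global $L^2$ estimate $\int_X |s_k|^2 e^{-k\psi}\,dV \leq C$, uniformly in $k$.

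The main obstacle is converting this $L^2$ information into the pointwise $L^\infty$ bound $|s_k|^2 \leq \text{const}\cdot e^{k\phi}$ needed to make $s_k$ (after rescaling) a valid competitor in the envelope defining $\Phi$. This is handled by the sub-mean value inequality for the plurisubharmonic function $\ln|s_k|^2$: averaging $|s_k|^2$ over a coordinate ball of radius $r = 1/k$ around an arbitrary point $x$, using the smoothness of $\phi$ and the bound $\psi \leq \phi$ to control $\sup_{B_r(x)} e^{k\psi} \leq e^{k\phi(x) + O(1)}$, yields $|s_k(x)|^2 \leq C'\, k^{2n}\, e^{k\phi(x)}$ uniformly on $X$. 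The rescaled section $\tilde s_k := (C')^{-1/2} k^{-n} s_k$ then satisfies $\tfrac{1}{k}\ln|\tilde s_k|^2 \leq \phi$ on all of $X$, while $\tfrac{1}{k}\ln|\tilde s_k(x_0)|^2 \geq \psi(x_0) - O(\log k / k) \to \psi(x_0)$. This shows $\Phi(x_0) \geq \psi(x_0)$ at every point where $\psi$ is finite; taking the usc regularization and then the supremum over all admissible $\psi$ yields $P(\phi) \leq \Phi$.
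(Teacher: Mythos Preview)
Your argument is essentially correct, but it follows a different route from the paper's. The paper does not build peak sections by hand: instead it introduces the $L^2$-norm $\|\cdot\|_{k\phi,dV}$, invokes the Bernstein--Markov inequality $\|\cdot\|_{k\phi,\infty}\leq Ce^{\epsilon k}\|\cdot\|_{k\phi,dV}$ to pass between sup-norm and $L^2$-norm unit balls at negligible cost, and then quotes Berman's theorem that the Bergman-type envelopes $\phi_k:=\sup\{\tfrac{1}{k}\ln|s|^2:\|s\|_{k\phi,dV}\leq 1\}$ converge to $P(\phi)$ on $Amp(L)$. Your approach instead unpacks what is effectively the proof of Berman's (or Demailly's) approximation: Ohsawa--Takegoshi produces a section peaking at $x_0$ with controlled $L^2(e^{-k\psi})$-norm, and the sub-mean-value inequality with radius $r\sim 1/k$ converts this into the required sup-norm bound against $e^{k\phi}$. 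Your version is more self-contained; the paper's is shorter because it outsources the analytic work.

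Two minor points worth tightening. First, the preliminary reduction to analytic singularities via Theorem~\ref{regthm} is both unnecessary and slightly awkward: the regularized metrics $\psi_m$ live in $PSH(X,L+\tfrac{1}{m}A)$ rather than $PSH(X,L)$, and need not satisfy $\psi_m\leq\phi$ exactly (only up to an error tending to zero). You can simply drop this step and apply Ohsawa--Takegoshi directly with the weight $k\psi$, working at points $x_0$ where the Lelong number of $\psi$ vanishes; this excludes only a pluripolar set, which is harmless after usc regularization. Second, on a compact manifold the extension theorem requires a curvature twist (one must absorb $K_X$ and ensure enough positivity), so for $k$ large you should use that $L$ is big to guarantee the hypotheses; this is standard but should be acknowledged.
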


\begin{proof}
Denote the right hand side of (\ref{sectionenv}) by $P_s(\phi)$ ($s$ for section). Since $P_s(\phi)\in PSH(X,L)$ and $P_s(\phi)\leq \phi$ we get that $$P_s(\phi)\leq P(\phi).$$

Pick a volume form $dV$ on $X$ with total volume $\int_X dV=1.$ We have the following norms on $H^0(X,kL):$ $$||s||^2_{k\phi,\infty}:=\sup_{x\in X}\{|s(x)|^2e^{-k\phi(x)}\}$$ and $$||s||^2_{k\phi,dV}:=\int_X |s(x)|^2e^{-k\phi(x)}dV.$$ Clearly $$||.||_{k\phi,dV}\leq ||.||_{k\phi,\infty}$$ but one can also show that for any $\epsilon>0,$ there is a constant $C$ independent of $k$ such that 
\begin{equation} \label{BernMark}
||.||_{k\phi,\infty}\leq Ce^{\epsilon k}||.||_{k\phi,dV}
\end{equation} 
(this is known as the Bernstein-Markov property of $dV,$ see e.g. \cite{DWN}). Let $$\phi_k:=\sup\left\{\frac{1}{k}\ln|s|^2: ||s||_{k\phi,dV}\leq 1\right\}.$$ It follows from (\ref{BernMark}) that $$P_s(\phi)\geq \limsup_{k\to \infty} \phi_k-\epsilon,$$ and since $\epsilon$ was arbitrary $$\sup^*\left\{\frac{1}{k}\ln|s|^2\leq \phi: s\in H^0(kL), k\in \mathbb{N}\right\}\geq \limsup_{k\to \infty} \phi_k.$$ Berman prove in \cite{Berman} that $\phi_k$ converges to $P(\phi)$ on $Amp(L),$ and thus $P_s(\phi)=P(\phi)$ on $Amp(L)$. Since the complement of this set is pluripolar the equality extends to the whole of $X.$
\end{proof}

\subsection{Seshadri constants} \label{Sectsesh}

The Seshadri criterion for ampleness says that $L$ is ample iff there exists a positive number $\epsilon$ such that $$L\cdot C\geq \epsilon \, \textrm{mult}_p C$$ for all curves $C$ and points $p.$ Inspired by this Demailly formalted the notion of Seshadri constant to quantify the local ampleness of a line bundle at a point.

\begin{definition}
The Seshadri constant of an ample line bundle $L$ at a point $p$ is given by $$\epsilon(X,L,p):=\inf_C \frac{L\cdot C}{\textrm{mult}_p C},$$ where the infimum is taken over all curves $C$ in $X.$ The global Seshadri constant of $L$ is then defined as $$\epsilon(X,L):=\inf_{p\in X}\epsilon(X,L,p).$$
\end{definition}

The next characterization of the Seshadri constant is often used as the definition. 

\begin{proposition} \label{propsesh1}
Let $\pi: \tilde{X}\to X$ denote the blowup of $X$ at $p$ and let $E$ denote the exceptional divisor. Then we have that $$\epsilon(X,L,p)=\sup\{\lambda: \pi^*L-\lambda E \textrm{ is nef}\}.$$ 
\end{proposition}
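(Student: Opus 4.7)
The plan is to reduce the nefness of $\pi^*L-\lambda E$ to the family of numerical inequalities $L\cdot C \geq \lambda\,\textrm{mult}_p C$ by classifying the irreducible curves on $\tilde X$ and computing their intersection numbers with $\pi^*L$ and $E$. Since nefness of an $\mathbb R$-divisor on a projective variety can be tested on irreducible curves, it suffices to check two families: curves contained in the exceptional divisor $E$, and strict transforms $\tilde C$ of irreducible curves $C\subset X$.

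For the first family, if $\ell\subset E\cong\mathbb P^{n-1}$ is a curve of degree $d$, then $\pi(\ell)=\{p\}$ gives $\pi^*L\cdot\ell=0$, while $\mathcal O_{\tilde X}(E)|_E \cong \mathcal O_{\mathbb P^{n-1}}(-1)$ yields $E\cdot \ell=-d$. Hence $(\pi^*L-\lambda E)\cdot\ell = \lambda d$, which is a nonnegative constraint exactly when $\lambda\geq 0$ and so produces no nontrivial upper bound. For the second family, the projection formula gives $\pi^*L\cdot \tilde C = L\cdot C$, and the standard blowup identity $E\cdot \tilde C = \textrm{mult}_p C$ yields $(\pi^*L-\lambda E)\cdot \tilde C = L\cdot C - \lambda\,\textrm{mult}_p C$. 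For curves $C$ that avoid $p$ this is automatically nonnegative (as $L$ is ample), so the only nontrivial constraints come from curves through $p$, and they read $\lambda \leq L\cdot C/\textrm{mult}_p C$. Taking the supremum of such $\lambda$ and the infimum over all such $C$ gives the claim.

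The one step that deserves care is the identity $E\cdot \tilde C = \textrm{mult}_p C$. I would establish it either by a direct local computation in affine charts of the blowup, using that $\textrm{mult}_p C$ equals the lowest-order homogeneous part of the local defining equations of $C$ at $p$, or by cutting with a general complete intersection of hyperplanes through $p$ to reduce to the surface case, where $\pi^*C=\tilde C + m E$ as divisors and intersecting with $E$ together with $E^2=-1$ immediately gives $E\cdot \tilde C = m$. Everything else is a routine application of the projection formula and standard intersection theory on a blowup.
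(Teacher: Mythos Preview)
The paper does not actually prove this proposition: it is stated as a well-known alternative characterization of the Seshadri constant (the paper writes ``The next characterization of the Seshadri constant is often used as the definition'') and no argument is supplied. Your proposal is the standard proof and it is correct. The classification of irreducible curves on $\tilde X$ into those contained in $E$ and strict transforms of curves on $X$ is complete because $\pi$ is an isomorphism off $E$; the intersection computations via the projection formula and $\mathcal{O}_{\tilde X}(E)|_E\cong\mathcal{O}_{\mathbb{P}^{n-1}}(-1)$ are right; and the identity $E\cdot\tilde C=\textrm{mult}_p C$ is the classical fact you indicate, provable either by the local computation with the tangent cone or by the surface reduction you outline. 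One tiny expositional point: when you conclude, it is worth saying explicitly that the set of admissible $\lambda$ is exactly the interval $[0,\inf_C L\cdot C/\textrm{mult}_p C]$, so that the supremum is the Seshadri constant; you have all the ingredients but the final sentence could be phrased a bit more directly.
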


For big line bundles $F$ we have the notion of moving Seshadri constant, introduced by Nakamaye in \cite{Nakamaye}.

\begin{definition}
Let $F$ be big and $p\in Amp(F).$ Then the moving Seshadri constant $\epsilon_{mov}(X,F,p)$ is defined by $$\epsilon_{mov}(X,F,p):=\sup\{\lambda: E\subseteq Amp(\pi^*F-\lambda E)\}.$$ When $p\in \mathbb{B}_+(F)$ we set $\epsilon_{mov}(X,F,p):=0.$
\end{definition}

\subsection{Volumes and Monge-Amp\`ere measures}

Recall that the volume of a holomorphic line bundle $L,$ denoted by $\textrm{vol}(L)$, is defined as $$\textrm{vol}(L):=\limsup_{k\to\infty}\frac{n!}{k^n}h^0(X,kL),$$ or if no positive multiple of $L$ has sections we set $\textrm{vol}(L):=0$. Here of course $n=\dim_{\mathbb{C}}X.$ By the Hirzebruch-Riemann-Roch and vanishing one gets that for an ample (or nef) line bundle $L$ $$\textrm{vol}(L)=(L^n).$$ This is however not true in general when $L$ is not nef, since then the selfintersection $(L^n)$ can be negative while clearly $\textrm{vol}(L)\geq 0$. It is a basic fact that $\textrm{vol}(L)>0$ iff $L$ is big in the sense defined above.

If $\phi$ is a positive metric of an ample line bundle $L$ we see that $$\textrm{vol}(L)=\int_X(dd^c\phi)^n.$$ For any $\phi\in PSH(X,L)$ one can use the work of Bedford-Taylor to define the nonpluripolar Monge-Ampere measure $MA(\phi)$ (we refer to \cite{BEGZ} for the precise definition). Boucksomproved in \cite{Boucksom} the following theorem.

\begin{theorem} \label{bvolthm}
If $L$ is a pseudoeffective line bundle and $\phi\in PSH(X,L)$ has minimal singularities, then we have that $$\textrm{vol}(L)=\int_X MA(\phi).$$ 
\end{theorem}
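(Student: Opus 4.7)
The plan is to pinch the non-pluripolar Monge-Amp\`ere mass $\int_X MA(\phi)$ between two asymptotic cohomological quantities that both converge to $\text{vol}(L)$: from above via analytic-singularity approximations obtained through Theorem \ref{regthm}, and from below via Fujita approximation. Both halves depend on a single monotonicity principle for the non-pluripolar product: if $\phi_1 \geq \phi_2 + O(1)$ in $PSH(X,L)$, then $\int_X MA(\phi_1) \geq \int_X MA(\phi_2)$. In particular any two minimal-singularity metrics give the same total mass, so it suffices to prove the equality for any convenient choice of $\phi$.

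For the upper bound, I would fix an auxiliary ample $A$ and apply Theorem \ref{regthm} to produce $\phi_m \in PSH(X, L + \frac{1}{m}A)$ with analytic singularities that approximate $\phi$ up to an additive $O(1/m)$. On a log resolution $\pi_m: \tilde X_m \to X$ the singularities of $\pi_m^*\phi_m$ are divisorial, so the Siu-type decomposition writes $dd^c \pi_m^*\phi_m = \beta_m + [N_m]$, with $\beta_m$ a smooth semipositive form representing a nef class $P_m$ and $N_m$ an effective $\mathbb{R}$-divisor; the non-pluripolar mass is then cohomological, $\int_X MA(\phi_m) = (P_m^n) \leq \text{vol}(L + \frac{1}{m}A)$. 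Combining monotonicity with the continuity of the volume as $m \to \infty$ yields $\int_X MA(\phi) \leq \text{vol}(L)$. For the reverse inequality I would invoke Fujita approximation: for each $\delta > 0$ choose a birational $\pi: Y \to X$ and a decomposition $\pi^* L = A_\delta + E_\delta$ with $A_\delta$ ample and $E_\delta$ effective, satisfying $(A_\delta^n) > \text{vol}(L) - \delta$. A smooth positive metric on $A_\delta$ together with $\ln|s_{E_\delta}|^2$ gives a singular positive metric on $\pi^*L$ which descends to $\psi_\delta \in PSH(X, L)$ with analytic singularities; minimality of $\phi$ gives $\phi \geq \psi_\delta + O(1)$, so monotonicity delivers $\int_X MA(\phi) \geq \int_X MA(\psi_\delta) = (A_\delta^n) > \text{vol}(L) - \delta$, and sending $\delta \to 0$ closes the argument.

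The main obstacle is the monotonicity property itself. The non-pluripolar product is delicate precisely where the metrics fail to be locally bounded: unlike the classical Bedford-Taylor product it is not a priori continuous under bounded perturbations, and it can lose mass across the pluripolar set $\{\phi = -\infty\}$. The standard route is to localize on the plurifine open sets $\{\phi_1, \phi_2 > -C\}$, where Bedford-Taylor theory applies directly, derive the comparison on those sets, and then let $C \to \infty$ using integration by parts and the comparison principle to control the boundary contributions. Once this monotonicity is in hand the rest of the argument is standard cohomological bookkeeping built on log resolutions and Fujita approximation.
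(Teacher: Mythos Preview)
The paper does not prove this theorem; it is stated as a result of Boucksom and simply cited from \cite{Boucksom}, so there is no ``paper's own proof'' to compare against. Your sketch is the standard route (essentially Boucksom's original argument, later streamlined in \cite{BEGZ}): squeeze the non-pluripolar mass between cohomological approximations from above (Demailly regularization plus log resolution) and from below (Fujita approximation), with the monotonicity of non-pluripolar Monge--Amp\`ere mass under domination of singularities as the hinge. You correctly identify that monotonicity as the genuine analytic difficulty; note that the paper in fact records exactly this principle as Theorem~\ref{compsing} (the comparison theorem \cite[Thm.~1.16]{BEGZ}), though only in the small-unbounded-locus case, which suffices once $L$ is big since any minimal-singularity metric then has small unbounded locus.

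Two small points worth tightening. First, in the upper-bound step you silently use $\int_X MA(\phi) \leq \int_X MA(\phi + \tfrac{1}{m}\phi_A)$ to pass from $L$ to $L+\tfrac{1}{m}A$; this is immediate from multilinearity and positivity of the non-pluripolar product, but should be said. Second, when $L$ is pseudoeffective but not big, Fujita approximation is vacuous and the lower bound is just $\geq 0$; the content then lies entirely in the upper bound together with the continuity $\textrm{vol}(L+\tfrac{1}{m}A)\to \textrm{vol}(L)=0$. With these remarks your outline is correct.
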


By homogeneity and continuity one can extend the definition of volume to arbitrary $\mathbb{R}$-line bundles $F.$ It is then still true that $$\textrm{vol}(F)=\int_X MA(\phi),$$ if $\phi\in PSH(X,F)$ has minimal singularities \cite{Boucksom}.   

A singular positive metric is said to have small unbounded locus if the points where it fails to be locally bounded is contained in a closed complete pluripolar set (see e.g. \cite{BEGZ}). Recall that a set is complete pluripolar if it locally is the $-\infty$ locus of a non-trivial plurisubharmonic function. If $F$ is big, then it is easy to see that any metric $\phi$ with minimal singularities has small unbounded locus. Also, in that case, if $Y$ is a smooth submanifold not contained in the augmented base locus, one can check that the restriction of $\phi$ to $Y$ also has small unbounded locus.

The following result is a special case of the fundamental theorem on comparison of singularities \cite[Thm. 1.16]{BEGZ}.

\begin{theorem} \label{compsing}
Let $\phi,\psi\in PSH(X,F)$ have small unbounded locus and assume that $$\psi\leq \phi+O(1).$$ Then it follows that $$\int_X MA(\psi)\leq \int_X MA(\phi).$$
\end{theorem}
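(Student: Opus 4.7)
The plan is to reduce to a truncated, bounded-type setting where Bedford--Taylor theory applies directly, and then exploit the plurifine locality of the non-pluripolar product together with monotone convergence.

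First I would normalize by replacing $\phi$ with $\phi+C$ for $C$ large enough that $\psi\leq\phi$ pointwise; this is allowed since adding a constant does not alter $MA(\phi)$ or the hypothesis. By the small unbounded locus assumption there is a closed complete pluripolar set $P$ outside which both $\phi$ and $\psi$ are locally bounded, so the non-pluripolar product reduces, on $X\setminus P$, to the classical Bedford--Taylor Monge--Amp\`ere operator. Next I would introduce the family of interpolating metrics
\[
\psi_k:=\max(\psi,\phi-k)\in PSH(X,F),\qquad k>0,
\]
and note the two defining properties: (i) $\phi-k\leq\psi_k\leq\phi$, so $\psi_k=\phi+O(1)$ has the same singularity type as $\phi$ and in particular the same small unbounded locus; (ii) $\psi_k$ decreases to $\psi$ pointwise, coincides with $\psi$ on the open set $\{\psi>\phi-k\}$, and coincides with $\phi-k$ on $\{\psi\leq\phi-k\}$. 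As $k\to\infty$, the latter set decreases to a subset of $\{\psi=-\infty\}\setminus\{\phi=-\infty\}$, which is pluripolar.

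Second, I would use the fact (a standard consequence of plurifine locality and a local application of Stokes' theorem on truncations $\max(\cdot,-j)$, exactly as in the proof of Theorem 1.16 of \cite{BEGZ}) that two metrics in $PSH(X,F)$ with the same singularity type and small unbounded locus have the same total non-pluripolar Monge--Amp\`ere mass. Applied to $\psi_k$ and $\phi$, this yields
\[
\int_X MA(\psi_k)=\int_X MA(\phi)\qquad\text{for every }k>0.
\]
On the other hand, by plurifine locality of the non-pluripolar product, on the open set $U_k:=\{\psi>\phi-k\}$ we have $MA(\psi_k)|_{U_k}=MA(\psi)|_{U_k}$, since $\psi_k\equiv\psi$ on $U_k$ and truncating both from below at $-j$ gives the same bounded psh function on $U_k$.

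Finally, I would let $k\to\infty$. The sets $U_k$ form an increasing exhaustion of $X\setminus\{\psi=-\infty\}$, and since $MA(\psi)$ puts no mass on the pluripolar set $\{\psi=-\infty\}$, monotone convergence of measures gives
\[
\int_X MA(\psi)=\lim_{k\to\infty}\int_{U_k}MA(\psi)=\lim_{k\to\infty}\int_{U_k}MA(\psi_k)\leq\lim_{k\to\infty}\int_X MA(\psi_k)=\int_X MA(\phi),
\]
which is the desired inequality. The main obstacle is the technical step of establishing invariance of the total non-pluripolar mass under bounded perturbation of singularity type; the slick proof goes through the canonical approximation by $\max(\cdot,-j)$, uses that these bounded metrics differ from one another by a bounded function on the (open) complement of $P$, and then invokes Stokes combined with an exhaustion argument to cancel out contributions near $P$. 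Given this ingredient, the rest of the argument is a clean monotone-convergence statement about non-pluripolar measures.
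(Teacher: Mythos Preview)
The paper does not actually prove this statement: it simply records it as a special case of \cite[Thm.~1.16]{BEGZ} and moves on. Your proposal is therefore not competing with any argument in the paper itself; rather, you have sketched (accurately) the proof strategy of the cited BEGZ result. The reduction via $\psi_k=\max(\psi,\phi-k)$, the invariance of total non-pluripolar mass under bounded perturbations of the singularity type, plurifine locality on $\{\psi>\phi-k\}$, and the final monotone-convergence step are exactly the ingredients of that proof, and your outline is correct. Since the paper offers no alternative route, there is nothing further to compare.
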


We will later need the following consequence.

\begin{lemma} \label{lemmacomp}
Let $u$ and $v$ be two psh functions on $\mathbb{C}^n$ with small unbounded locus, such that $$u(z)\leq v(z)+O(1).$$ We also assume that for some $C>0,$ $v(z)\leq C\ln(1+|z|^2)+O(1).$ Then we have that $$\int_{\mathbb{C}^n}MA(u)\leq \int_{\mathbb{C}^n}MA(v)<\infty.$$ In particular if $u$ and $v$ are locally bounded and $u(z)=v(z)+O(1)$ we have that $$\int_{\mathbb{C}^n}MA(u)=\int_{\mathbb{C}^n}MA(v).$$
\end{lemma}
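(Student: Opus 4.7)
The plan is to reduce to the compact comparison principle (Theorem \ref{compsing}) by compactifying $\mathbb{C}^n$ to $\mathbb{P}^n$. Let $\omega_{FS}=dd^c\ln(1+|z|^2)$ be the Fubini-Study form in the affine chart, so that $C\omega_{FS}$ represents $c_1(C\mathcal{O}(1))$ and $C\ln(1+|z|^2)$ is a smooth positive metric on $C\mathcal{O}(1)$ over $\mathbb{C}^n\subset\mathbb{P}^n$.

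First I would extend $u$ and $v$ to singular positive metrics on $C\mathcal{O}(1)$. The growth bound $v\leq C\ln(1+|z|^2)+O(1)$ makes $v-C\ln(1+|z|^2)$ a bounded-above $C\omega_{FS}$-psh function on $\mathbb{C}^n$, so its upper semicontinuous extension across the hyperplane at infinity $H:=\mathbb{P}^n\setminus\mathbb{C}^n$ produces a metric $\tilde{v}\in PSH(\mathbb{P}^n,C\mathcal{O}(1))$. Since $u\leq v+O(1)\leq C\ln(1+|z|^2)+O(1)$, the same construction yields $\tilde{u}\in PSH(\mathbb{P}^n,C\mathcal{O}(1))$, and the inequality $\tilde{u}\leq\tilde{v}+O(1)$ continues to hold globally on $\mathbb{P}^n$.

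Next I would check that $\tilde{u}$ and $\tilde{v}$ have small unbounded locus on $\mathbb{P}^n$. The hyperplane $H$ is complete pluripolar because it is locally the zero set of a single holomorphic function. By hypothesis, the unbounded loci of $u,v$ on $\mathbb{C}^n$ are contained in closed complete pluripolar sets, and summing their local defining psh functions with $\ln|s|^2$ (where $s$ is a local section cutting out $H$) shows the corresponding enlargements on $\mathbb{P}^n$ are still complete pluripolar. Theorem \ref{compsing} then yields $\int_{\mathbb{P}^n}MA(\tilde{u})\leq\int_{\mathbb{P}^n}MA(\tilde{v})$.

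To finish, I would identify these integrals with the ones over $\mathbb{C}^n$. Since $H$ is pluripolar in $\mathbb{P}^n$ and non-pluripolar Monge-Amp\`ere places no mass on pluripolar sets, $\int_{\mathbb{P}^n}MA(\tilde{v})=\int_{\mathbb{C}^n}MA(v)$, and likewise for $u$. Finiteness is automatic from Theorem \ref{bvolthm}, which gives $\int_{\mathbb{P}^n}MA(\tilde{v})\leq\textrm{vol}(C\mathcal{O}(1))=C^n<\infty$. The ``in particular'' statement follows by applying the main inequality to both hypotheses $u\leq v+O(1)$ and $v\leq u+O(1)$. The main obstacle I anticipate is the small-unbounded-locus book-keeping: one must verify that the extensions $\tilde{u},\tilde{v}$, which can equal $-\infty$ along all of $H$, still fit within the hypotheses of Theorem \ref{compsing}, and that the non-pluripolar Monge-Amp\`ere measure of the compactified metric on $\mathbb{P}^n$ restricts correctly to the non-pluripolar Monge-Amp\`ere on the affine chart; once these points are settled, the lemma is immediate.
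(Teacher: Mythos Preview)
Your proposal is correct and follows exactly the same strategy as the paper: extend $u$ and $v$ across the hyperplane at infinity to singular positive metrics of $C\mathcal{O}(1)$ on $\mathbb{P}^n$, observe that they still have small unbounded locus, and invoke Theorem~\ref{compsing}. The paper's proof is a two-line sketch of precisely this argument; you have simply supplied the bookkeeping details (the usc extension, the complete-pluripolarity of $H$, and the fact that the non-pluripolar Monge--Amp\`ere puts no mass on $H$) that the paper leaves implicit.
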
 

\begin{proof}
Because of the estimates $u(z)+\mathcal{O}(1)\leq v(z)+\mathcal{O}(1)\leq C\ln(1+|z|^2)+O(1)$ we can extend $u$ and $v$ to two singular positive metrics of $C\mathcal{O}(1)$ on $\mathbb{P}^n$, which still will have small unbounded locus. Thus the Lemma follows from Theorem \ref{compsing}.

\end{proof}

\subsection{Restricted volumes and positive intersection products} 

Let $Y$ be an irreducible subvariety of $X$ of complex dimension $m,$ and we assume that $Y$ is not contained in the augmented base locus of $L$. For all $k$ there is a restriction map $R_k:H^0(X,kL) \to H^0(Y,kL_{|Y}).$ Following Ein-Lazarsfeld-Musta\c{t}\u{a}-Nakamaye-Popa we define the restricted volume of $L$ along $Y$, denoted by $\textrm{vol}_{X|Y}(L),$ as $$\textrm{vol}_{X|Y}(L):=\limsup_{k\to\infty}\frac{m!}{k^m}\dim_{\mathbb{C}} R_k(H^0(X,kL)).$$ 

Provided that $Y$ meets the ample locus of an $\mathbb{R}$-line bundle $F$, the restricted volume can be defined using homogeneity and continuity. 
 
In \cite{LazMus} Lazarsfeld-Musta\c{t}\u{a} prove the following theorem (Corollary C):

\begin{theorem} \label{LMthm}
Let $Y$ be a prime divisor and $F$ a big ($\mathbb{R}$-) line bundle. Let $C_{max}$ denote the supremum of $\lambda$ such that $F-\lambda Y$ is pseudoeffective (or big). Then we have that $$\textrm{vol}(F)=n\int_{[0,C_{\max}]}\textrm{vol}_{X|Y}(F-\lambda Y)d\lambda.$$
\end{theorem}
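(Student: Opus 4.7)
\medskip
\noindent\textbf{Proof plan for Theorem \ref{LMthm}.}

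The plan is to reduce to the case where $F$ is an integral line bundle (the general case then follows by continuity and homogeneity of the volume and restricted volume on the big cone, together with rescaling $Y\mapsto \frac{1}{m}Y$ if needed), and then to extract the formula from a filtration of sections by order of vanishing along $Y$. Concretely, for each $m\in\mathbb{N}$ set
\begin{equation*}
\mathcal{F}^j H^0(X,mF):=H^0(X,mF-jY)\subseteq H^0(X,mF),
\end{equation*}
which is well-defined because $Y$ is a prime divisor. This is a decreasing filtration, and it is eventually zero since $\mathcal{F}^j=0$ once $j>mC_{\max}$ (the bundle $F-\frac{j}{m}Y$ then being non-pseudoeffective). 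Restriction to $Y$ sends $\mathcal{F}^j/\mathcal{F}^{j+1}$ injectively into $H^0(Y,(mF-jY)|_Y)$, its image being precisely $R_{m,j}\bigl(H^0(X,mF-jY)\bigr)$ where $R_{m,j}$ denotes the restriction map. Hence
\begin{equation*}
h^0(X,mF)=\sum_{j=0}^{\lfloor mC_{\max}\rfloor}\dim R_{m,j}\bigl(H^0(X,mF-jY)\bigr).
\end{equation*}

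Next I would take the limit $m\to\infty$. By the definition of the restricted volume, for fixed $t\in(0,C_{\max})$ the term with $j=\lfloor mt\rfloor$ satisfies
\begin{equation*}
\dim R_{m,\lfloor mt\rfloor}\bigl(H^0(X,m(F-tY))\bigr)=\frac{m^{n-1}}{(n-1)!}\,\mathrm{vol}_{X|Y}(F-tY)+o(m^{n-1}),
\end{equation*}
where the rescaling $mF-\lfloor mt\rfloor Y=m(F-tY)+O(1)$ is absorbed using homogeneity of the restricted volume and continuity on the big cone. Multiplying by $n!/m^n$ and summing, the right-hand side of the displayed identity becomes a Riemann sum of step $1/m$:
\begin{equation*}
\frac{n!}{m^n}h^0(X,mF)=\frac{n}{m}\sum_{j=0}^{\lfloor mC_{\max}\rfloor}\mathrm{vol}_{X|Y}\!\left(F-\tfrac{j}{m}Y\right)+o(1),
\end{equation*}
which in the limit gives exactly $n\int_0^{C_{\max}}\mathrm{vol}_{X|Y}(F-\lambda Y)\,d\lambda$. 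The passage $\limsup \to \lim$ on the left is legitimate because $F$ is big, so the volume is attained as an honest limit.

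The principal obstacles are precisely the two uniformity statements hidden in the argument above. First, one needs a uniform asymptotic $\dim R_{m,j}(H^0(X,mF-jY))=\frac{m^{n-1}}{(n-1)!}\mathrm{vol}_{X|Y}(F-\tfrac{j}{m}Y)+o(m^{n-1})$ that is uniform in $j\in[0,mC_{\max}]$; this requires continuity of $t\mapsto \mathrm{vol}_{X|Y}(F-tY)$ on $[0,C_{\max})$ (and an integrable bound near $C_{\max}$, where the restricted volume tends to $0$). Second, the exchange of $\limsup_m$ with the sum needs to be justified by a dominated-convergence-type estimate, for instance by bounding $\dim R_{m,j}\leq h^0(X,mF)/\#\{j\}$-type inequalities and using the log-concavity of volumes along $t\mapsto\mathrm{vol}(F-tY)$. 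These continuity properties of the restricted volume on the big cone are the real content of the theorem, and once they are in hand the filtration identity and its Riemann-sum limit deliver the formula. Finally, the extension from integral $F$ to $\mathbb{R}$-line bundles is immediate by homogeneity and continuity on the big cone.
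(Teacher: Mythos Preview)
The paper does not actually prove this statement: it is quoted as Corollary~C of Lazarsfeld--Musta\c{t}\u{a} \cite{LazMus} and used as a black box. So there is no in-paper proof to compare against.

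That said, it is worth remarking that your sketch follows a different route from the one in \cite{LazMus}. Lazarsfeld--Musta\c{t}\u{a} prove the formula via Okounkov bodies: choosing a flag whose first member is $Y$, the Okounkov body $\Delta(F)$ fibres over $[0,C_{\max}]$ with slice at height $\lambda$ equal (after a linear identification) to the Okounkov body of the restricted linear series of $F-\lambda Y$ along $Y$. Fubini then gives the integral formula immediately, and the uniformity issues you flag are absorbed into the general machinery (existence of the limit defining the Okounkov body, and the identification of slice volumes with restricted volumes). Your filtration-plus-Riemann-sum argument is the classical heuristic that the Okounkov body approach makes rigorous; you have correctly isolated the two genuine difficulties (uniformity of the asymptotic in $j$, and a dominated-convergence bound near $\lambda=C_{\max}$), but as written these remain unproved assertions. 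In particular, the suggested bound ``$\dim R_{m,j}\le h^0(X,mF)/\#\{j\}$'' is not valid, and log-concavity of $t\mapsto\mathrm{vol}(F-tY)$ does not by itself control the individual terms $\dim R_{m,j}$. If you want to complete your direct argument without Okounkov bodies, you would need an independent proof of the continuity of $t\mapsto \mathrm{vol}_{X|Y}(F-tY)$ on $[0,C_{\max})$ and a uniform comparison between $\dim R_{m,\lfloor mt\rfloor}$ and $\tfrac{m^{n-1}}{(n-1)!}\mathrm{vol}_{X|Y}(F-tY)$; both are nontrivial and are precisely what the Okounkov body formalism supplies.
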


Related to the restricted volume is a construction introduced in \cite{BDPP} called the positive intersection product. E.g., given a big ($\mathbb{R}$-) line bundle $F$ one can define $\langle F^{n-1}\rangle$ which is an element in $H^{n-1,n-1}(X,\mathbb{R})$, i.e. a real cohomology class of bidegree $(n-1,n-1)$. $\langle F^{n-1}\rangle$ is called the $(n-1)$:th positive selfintersection of $F$. For the definition we refer the reader to \cite{BDPP} or \cite{BFJ}. When $F$ is ample it is equal to $c_1(F)^{n-1}$, but this is not true in general.

In \cite[Thm. B]{BFJ} Boucksom-Favre-Jonsson proved the following connection between the positive intersection product and the restricted volume.

\begin{theorem} \label{BFJthm}
Let $Y$ be a prime divisor and $F$ a big ($\mathbb{R}$-) line bundle. If $Y$ meets the ample locus of $F$ then $$\langle F^{n-1}\rangle\cdot [Y]=\textrm{vol}_{X|Y}(F)$$
\end{theorem}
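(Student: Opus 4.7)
The plan is to reduce to the classical ample case via a Fujita-type approximation and then transfer the equality through the birational modifications used in approximating $F$. First, consider the base case where $F=A$ is ample. Then by definition of the positive intersection product, $\langle A^{n-1}\rangle=A^{n-1}$. The short exact sequence $0\to\mathcal{I}_Y\otimes kA\to kA\to kA|_Y\to 0$ together with Serre vanishing shows that the restriction map $R_k:H^0(X,kA)\to H^0(Y,kA|_Y)$ is surjective for $k\gg 0$. Since $A|_Y$ is ample on $Y$, asymptotic Riemann--Roch on $Y$ yields $h^0(Y,kA|_Y)=\frac{k^{n-1}}{(n-1)!}A^{n-1}\cdot[Y]+O(k^{n-2})$, so $\textrm{vol}_{X|Y}(A)=A^{n-1}\cdot[Y]$, which is the desired equality.

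For a general big $\mathbb{R}$-line bundle $F$, I would invoke Fujita's approximation theorem: for each $\epsilon>0$ there is a birational morphism $\pi:X'\to X$ and a decomposition $\pi^*F=A+E$ with $A$ ample and $E$ effective, such that $A^n\geq \textrm{vol}(F)-\epsilon$. By definition, $\langle F^{n-1}\rangle$ is a supremum (in the appropriate sense in $H^{n-1,n-1}(X,\mathbb{R})$) of $\pi_*(A^{n-1})$ over such approximations. Crucially, the hypothesis $Y\cap\textrm{Amp}(F)\neq \emptyset$ permits choosing these approximations so that the strict transform $Y'$ of $Y$ is not contained in $E$: using Demailly's regularization (Theorem \ref{regthm}) one produces singular positive metrics on $F$ with analytic singularities whose polar locus avoids a chosen point of $Y\cap \textrm{Amp}(F)$, and the algebraic Fujita approximation built from these metrics via the associated multiplier ideals has $Y'\not\subset E$. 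For such a compatible approximation, $A|_{Y'}$ is ample and $Y'$ is birational to $Y$.

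Combining the base case with this approximation yields both inequalities. For the lower bound, sections of $kA$ on $X'$ push forward under $\pi$ (since $Y'\not\subset E$) to sections of $k\pi^*F$ whose restriction to $Y$ recovers restricted sections of $kF$ along $Y$, giving $\textrm{vol}_{X|Y}(F)\geq \textrm{vol}_{X'|Y'}(A)=A^{n-1}\cdot[Y']$, which tends to $\langle F^{n-1}\rangle\cdot [Y]$ as $\epsilon\to 0$. For the upper bound, any restricted section of $kF$ pulls back to a restricted section of $k\pi^*F$ on $Y'$ and, after multiplication by the canonical section of $kE$ (which is nonvanishing on $Y'$), embeds in the space of sections of $k(A+E)|_{Y'}$; using that $\langle (\pi^*F)^{n-1}\rangle\cdot[Y']$ is controlled by $A^{n-1}\cdot[Y']$ up to $o(1)$, we obtain the reverse bound. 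Passing to the limit then produces $\textrm{vol}_{X|Y}(F)=\langle F^{n-1}\rangle\cdot[Y]$.

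The principal obstacle is constructing Fujita approximations compatible with $Y$ in the sense that $Y'\not\subset E$, under the sole hypothesis $Y\cap\textrm{Amp}(F)\neq\emptyset$: this is exactly where the ample locus assumption enters decisively, since without it both sides can fail to be well-behaved, and rigorously carrying it out demands an interplay between the analytic regularization of Theorem \ref{regthm} and its algebraic counterpart via multiplier/base ideals, together with some care about continuity of the positive intersection product under suprema in the real $(n-1,n-1)$-cohomology.
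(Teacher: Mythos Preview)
The paper does not prove this statement: it is quoted from \cite[Thm.~B]{BFJ} as a preliminary result, with no argument given. So there is no ``paper's own proof'' to compare against.

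As for your sketch on its own merits: the ample base case is fine, and the overall Fujita-approximation strategy is the natural one, since $\langle F^{n-1}\rangle$ is by definition a supremum of classes $\pi_*(A^{n-1})$ over such decompositions. Your lower bound is essentially right in spirit (though the phrase ``sections of $kA$ push forward to sections of $k\pi^*F$'' should read: multiply by $s_E^{\otimes k}$ to land in $H^0(X',k\pi^*F)$, then push down). The upper bound, however, is garbled. You write that a restricted section of $kF$, after pullback and ``multiplication by the canonical section of $kE$'', embeds in $H^0(Y',k(A+E)|_{Y'})$; but $\pi^*F=A+E$ already, so pullback alone lands you there, and multiplying by $s_E^{\otimes k}$ would put you in $k(A+2E)$, which is not what you want. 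What you actually need is to bound $\textrm{vol}_{X|Y}(F)$ from above by $A^{n-1}\cdot[Y']$ plus an error going to zero, and this does not follow from a naive section-counting argument on a single approximation: the supremum defining $\langle F^{n-1}\rangle$ is one-sided, and controlling $\textrm{vol}_{X|Y}(F)$ from above requires a genuinely different idea.

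In \cite{BFJ} the argument in fact proceeds differently: they first establish that $\textrm{vol}$ is $C^1$ on the big cone with directional derivative $\frac{d}{dt}\big|_{t=0}\textrm{vol}(F+tD)=n\,\langle F^{n-1}\rangle\cdot D$, and then compare this with the integral formula $\textrm{vol}(F)=n\int_0^{C_{\max}}\textrm{vol}_{X|Y}(F-\lambda Y)\,d\lambda$ (Theorem~\ref{LMthm} here), whose derivative at $\lambda=0$ gives $n\,\textrm{vol}_{X|Y}(F)$. The hypothesis that $Y$ meets $\textrm{Amp}(F)$ is used to ensure the restricted volume is continuous at $\lambda=0$.
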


An important consequence of this theorem is that (under the stated assumptions) the restricted volume $\textrm{vol}_{X|Y}(F)$ only depends on the cohomology classes of $F$ and $Y$.

We will also need the following result which is Lemma 4.10 in \cite{BFJ}.

\begin{lemma} \label{BFJlemma}
If $Y$ is a prime divisor which is contained in the augmented base locus of a big ($\mathbb{R}$-) line bundle $F$ then $$\langle F^{n-1}\rangle\cdot [Y]=0.$$
\end{lemma}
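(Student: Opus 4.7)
The plan is to compute $\langle F^{n-1}\rangle\cdot [Y]$ through a carefully chosen sequence of Fujita approximations in which the proper transform of $Y$ always appears with positive multiplicity in the effective part, and then to apply the Boucksom-Demailly-Paun-Peternell orthogonality estimate to make that effective contribution disappear in the limit. Recall that the positive intersection product is computed as $\langle F^{n-1}\rangle = \lim_k (\pi_k)_*(A_k^{n-1})$ over a sequence of modifications $\pi_k:X_k\to X$ equipped with Fujita decompositions $\pi_k^*F = A_k + E_k$, where $A_k$ is ample on $X_k$, $E_k$ is effective, and $(A_k^n)\to \textrm{vol}(F)$.

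The first substantive step is to exploit $Y\subseteq \mathbb{B}_+(F)$. Fix any ample class $H$ on $X$. Then $\mathbb{B}_+(F) = \mathbb{B}(F-\epsilon H)$ for all sufficiently small $\epsilon>0$, and in particular the minimal multiplicity satisfies $\nu_Y(F-\epsilon H)\geq c$ for some fixed $c>0$ once $\epsilon$ is small. Starting from a Fujita approximation $\pi_k^*(F-\epsilon_k H) = A'_k + E'_k$ of $F-\epsilon_k H$, I would set $A_k := A'_k + \epsilon_k\pi_k^*H$ and $E_k := E'_k$. This produces a Fujita approximation $\pi_k^*F = A_k + E_k$ of $F$ in which the proper transform $\tilde Y_k$ of $Y$ appears as a component of $E_k$ with multiplicity at least $c$. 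By choosing the centers carefully I may also arrange that $\pi_k$ does not blow up anything contained in $Y$, so that $\pi_k^*[Y] = [\tilde Y_k]$ as divisor classes.

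With this setup the projection formula gives
\[\langle F^{n-1}\rangle\cdot [Y] = \lim_k A_k^{n-1}\cdot \pi_k^*[Y] = \lim_k A_k^{n-1}\cdot [\tilde Y_k]\leq \frac{1}{c}\lim_k A_k^{n-1}\cdot E_k.\]
The BDPP orthogonality estimate (Theorem 3.12 of \cite{BDPP}) provides a bound of the shape $(A_k^{n-1}\cdot E_k)^2\leq C\,(\textrm{vol}(F)-(A_k^n))$, so as the Fujita approximation improves $(A_k^{n-1}\cdot E_k)\to 0$, and hence $\langle F^{n-1}\rangle\cdot [Y]=0$.

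The step I expect to require the most care is the second one, namely producing a sequence of Fujita approximations of $F$ whose effective parts contain $\tilde Y_k$ with a uniform positive multiplicity while still satisfying $(A_k^n)\to \textrm{vol}(F)$. This calls for a diagonal argument: $\epsilon_k\to 0$ slowly enough that the multiplicity bound coming from $\mathbb{B}_+(F)=\mathbb{B}(F-\epsilon_k H)$ survives in the limit, but fast enough that the ample twist $\epsilon_k\pi_k^*H$ does not corrupt the convergence of volumes. Once this bookkeeping is set up correctly, the orthogonality estimate completes the argument.
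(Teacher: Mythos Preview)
The paper does not give its own proof of this lemma; it is simply quoted as Lemma~4.10 of \cite{BFJ}. So there is nothing to compare against, and the question is whether your argument stands on its own.

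Your argument works in the case $\nu_Y(F)>0$, i.e.\ when $Y$ already lies in the diminished base locus $\mathbb{B}_-(F)$. In that situation every Fujita decomposition $\pi_k^*F=A_k+E_k$ automatically has $\mathrm{mult}_{\tilde Y_k}(E_k)\geq \nu_Y(F)=:c>0$, no diagonal argument is needed, and orthogonality finishes the proof exactly as you describe.

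The gap is in the remaining case $Y\subseteq \mathbb{B}_+(F)\setminus\mathbb{B}_-(F)$, where $\nu_Y(F)=0$. Here your claimed uniform bound $\nu_Y(F-\epsilon H)\geq c>0$ fails: one only has $\nu_Y(F-\epsilon H)>0$ for each individual $\epsilon>0$, and this quantity tends to $0$ as $\epsilon\to 0$. The orthogonality estimate is then too crude to compensate. For a concrete test case take $X=\mathrm{Bl}_p\mathbb{P}^2$, $F=H$ the pull-back of a line (nef and big), and $Y=E$ the exceptional curve. The decompositions $F=(H-\epsilon E)+\epsilon E$ have $c_\epsilon=\epsilon$ and $\mathrm{vol}(F)-A_\epsilon^2=\epsilon^2$, so orthogonality gives $A_\epsilon\cdot E_\epsilon\leq \sqrt{C}\,\epsilon$ and your inequality yields only $A_\epsilon\cdot[Y]\leq \sqrt{C}$, a constant. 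The actual value $A_\epsilon\cdot E=\epsilon$ does tend to zero, but not for the reason you invoke; the diagonal bookkeeping you propose cannot repair this, since the discrepancy is in the exponent.

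The clean fix is a continuity step rather than a diagonal one: for each fixed $\epsilon>0$ one has $\nu_Y(F-\epsilon H)>0$ (this is where $Y\subseteq\mathbb{B}_+(F)$ is used, via $\mathbb{B}_+(F)=\mathbb{B}_-(F-\epsilon H)$ for small $\epsilon$), so the first case gives $\langle (F-\epsilon H)^{n-1}\rangle\cdot[Y]=0$; then let $\epsilon\to 0$ and use the continuity of the positive intersection product on the big cone, established in \cite{BFJ}, to conclude $\langle F^{n-1}\rangle\cdot[Y]=0$.
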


Hisamoto proved in \cite{Hisamoto} (Theorem 1.3) a formula for the restricted volume along a smooth hypersurface as a total Monge-Amp\`ere mass:

\begin{theorem} \label{Histhm}
Let $F$ be a big ($\mathbb{R}$-) line bundle, $Y$ a smooth hypersurface not contained in $\mathbb{B}_+(F),$ and $\phi\in PSH(X,F)$ a singular positive metric with minimal singularities. Then $$\textrm{vol}_{X|Y}(F)=\int_Y MA(\phi_{|Y}).$$ 
\end{theorem}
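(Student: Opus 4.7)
The plan is to approach this via Bergman kernel asymptotics combined with the Ohsawa--Takegoshi $L^2$-extension theorem. Fix an auxiliary smooth (not necessarily positive) metric $\phi_0$ of $F$ and a smooth volume form $dV$ on $X$, and use these to equip each $H^0(X,kF)$ with an $L^2$-inner product. Since any two singular positive metrics of $F$ with minimal singularities differ by $O(1)$, Theorem \ref{compsing} together with Lemma \ref{lemmacomp} shows that $\int_Y MA(\phi_{|Y})$ does not depend on the choice of $\phi$, so it suffices to prove the formula for $\phi = P(\phi_0)$.

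Next I would introduce the restricted Bergman function $B_k^{\mathrm{res}}$ on $Y$, namely the Bergman function of the quotient Hilbert space $H^0(X,kF)/\ker R_k$ equipped with its quotient norm, transported to $Y$ via $R_k$. By the standard trace identity one then has
\begin{equation*}
\dim R_k\bigl(H^0(X,kF)\bigr) = \int_Y B_k^{\mathrm{res}}(y)\, e^{-k\phi_0(y)}\, dV_Y(y).
\end{equation*}
After dividing by $k^{n-1}/(n-1)!$ the left side tends to $\textrm{vol}_{X|Y}(F)$, so the theorem reduces to the weak-$\ast$ convergence
\begin{equation*}
\frac{1}{k^{n-1}}\, B_k^{\mathrm{res}}(y)\, e^{-k\phi_0(y)}\, dV_Y(y) \;\longrightarrow\; \frac{1}{(n-1)!}\, MA\bigl(P(\phi_0)_{|Y}\bigr).
\end{equation*}

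This convergence splits into two subproblems. First, one identifies the restricted equilibrium envelope $\psi_{\infty}:=\bigl(\limsup_k \tfrac{1}{k}\log B_k^{\mathrm{res}}\bigr)^{*}$ on $Y$ with $P(\phi_0)_{|Y}$ up to bounded error. The inequality $\psi_{\infty}\le P(\phi_0)_{|Y}+O(1)$ is immediate from the extremal characterization, since $\frac{1}{k}\log|R_k(s)|^2\le \phi_{|Y}+O(1)$ for any unit-norm $s$ and any minimal-singularity $\phi$. The reverse inequality is where Ohsawa--Takegoshi enters: every germ on $Y$ of controlled $L^2$-norm extends to a global section on $X$ of uniformly comparable $L^2$-norm, so $B_k^{\mathrm{res}}$ pointwise dominates (up to a $\frac{1}{k}\log k$ correction) the intrinsic Bergman function on $Y$ built from restrictions, and Proposition \ref{approxsect} then gives $\psi_{\infty}\ge P(\phi_0)_{|Y}-O(1)$ on $Amp(F)\cap Y$.

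The second, and hardest, step is upgrading this envelope convergence to convergence of the non-pluripolar Monge--Amp\`ere masses. Because $P(\phi_0)$ only has minimal singularities and $Y$ can meet its unbounded locus, Bedford--Taylor continuity is not directly available. I would handle this by combining Berman's quantitative envelope estimates (which give uniform convergence $\frac{1}{k}\log B_k\to P(\phi_0)$ on compact subsets of $Amp(F)$, as used in the proof of Proposition \ref{approxsect}) with the non-pluripolar Monge--Amp\`ere calculus: since $Y$ is not contained in $\mathbb{B}_+(F)$, Theorem \ref{thmregularity} implies that $P(\phi_0)_{|Y}$ is $C^{1,1}$ and has small unbounded locus on $Y$, so the truncations $\max(P(\phi_0)_{|Y},-C)$ exhaust $MA(P(\phi_0)_{|Y})$ as $C\to\infty$. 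Matching Bergman-kernel convergence on compact subsets of $Amp(F)\cap Y$ with the exhaustion, and showing that no mass is lost in the unbounded locus (using that the relevant currents live in a fixed cohomology class on a compactification of $Amp(F)\cap Y$ and applying Theorem \ref{compsing}), should then yield the claimed weak-$\ast$ convergence and close the argument.
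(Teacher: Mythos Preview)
The paper does not prove this statement at all: Theorem~\ref{Histhm} is quoted from Hisamoto~\cite{Hisamoto} (Theorem~1.3 there) and used as a black box, so there is no ``paper's own proof'' to compare against. Your outline is, in spirit, close to Hisamoto's actual strategy (restricted Bergman kernel asymptotics combined with Ohsawa--Takegoshi extension), so you are not on the wrong track, but since the paper itself supplies nothing here, let me comment directly on the soundness of your sketch.

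There are two points where the argument, as written, is not yet a proof. First, the theorem is stated for $\mathbb{R}$-line bundles, for which $H^0(X,kF)$ makes no sense; your whole Bergman-kernel setup presupposes an honest (or at least $\mathbb{Q}$-) line bundle. One must first establish the integral case and then pass to $\mathbb{R}$-coefficients by a continuity argument for both sides (continuity of restricted volume on the big cone away from $\mathbb{B}_+$, and continuity of $\int_Y MA(P(\phi_0)_{|Y})$ under perturbation of the class). This step is not automatic and should be isolated.

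Second, and more seriously, your ``no mass is lost'' step is where the real content lies, and the sketch does not supply it. Knowing that $\psi_\infty = P(\phi_0)_{|Y}+O(1)$ on $Amp(F)\cap Y$ gives, via Theorem~\ref{compsing}, only the inequality $\int_Y MA(\psi_\infty)\le \int_Y MA(P(\phi_0)_{|Y})$; to turn the weak-$\ast$ limit of the Bergman measures into $MA(P(\phi_0)_{|Y})$ one needs an \emph{a priori} upper bound on the total limiting mass by $\textrm{vol}_{X|Y}(F)$ together with a matching lower bound, and your appeal to ``currents living in a fixed cohomology class on a compactification of $Amp(F)\cap Y$'' is too vague to deliver this. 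In Hisamoto's argument this is handled by a careful local--global comparison of Bergman kernels and a Fujita-type approximation, not by a soft compactness statement. Also note that Theorem~\ref{thmregularity} gives $C^{1,1}$ regularity of $P(\phi_0)$ only on $Amp(F)$, so your assertion that $P(\phi_0)_{|Y}$ is $C^{1,1}$ holds only on $Y\cap Amp(F)$, which is consistent with ``small unbounded locus'' but should be stated precisely.
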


\subsection{Volumes of linear series} \label{Sectlinear}

Let $L$ be a big line bundle and $W_{\bullet}$ a graded linear series of $L$, i.e. $\oplus_{k\in \mathbb{N}}W_k$ being a graded subalgebra of the section ring $\oplus_{k\in \mathbb{N}}H^0(X,kL)$. In analogy with the volume of $L$, the volume of $W_{\bullet}$ is defined as $$\textrm{vol}(W_{\bullet}):=\limsup_{k\to\infty}\frac{n!}{k^n}\dim_{\mathbb{C}} W_k.$$

We say that $W_{\bullet}$ contains an ample series if for some $m\gg 0$, $\oplus_{k\in \mathbb{N}}W_{km}$ contains as a subalgebra the section ring of an ample line bundle. If that is the case, then for $k$ divisible enough, the Kodaira map from $X$ minus the base locus of $W_k$ to $\mathbb{P}(W_k^*)$ is an embedding on an open dense set.

Let $\phi$ be a smooth metric of $L$. We define $$P_{W_{\bullet}}(\phi):=\sup^*\{\frac{1}{k}\ln|s|^2:s\in W_k\cap B_1(kL,k\phi), k\geq 1\}.$$ The following theorem was proved by Hisamoto \cite[Thm. 3]{Hisamoto2}. 

\begin{theorem} \label{Histhm2}
If $W_{\bullet}$ contains an ample series then $$\textrm{vol}(W_{\bullet})=\int_X MA(P_{W_{\bullet}}(\phi)).$$
\end{theorem}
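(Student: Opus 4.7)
The plan is to adapt the $L^2$-Bergman-kernel approach of Berman-Boucksom (used to prove Theorem \ref{bvolthm}) from the full section ring to the graded subalgebra $W_{\bullet}$, with the ample series assumption playing the role that ampleness plays in the classical case.

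First I would fix a smooth volume form $dV$ on $X$ satisfying the Bernstein-Markov property with respect to $\phi$, and work with the $L^2$-unit balls $B_1^{dV}(W_k,k\phi):=\{s\in W_k:\|s\|_{k\phi,dV}\le 1\}$ instead of the sup-norm balls. Setting
\[
\phi_{k}:=\sup\Bigl\{\tfrac{1}{k}\ln|s|^2:s\in B_1^{dV}(W_k,k\phi)\Bigr\}=\tfrac{1}{k}\ln K_k,
\]
where $K_k$ is the Bergman kernel of the Hilbert space $(W_k,\|\cdot\|_{k\phi,dV})$, one gets the fundamental accounting identity $\dim W_k=\int_X e^{k(\phi_k-\phi)}dV$. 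The Bernstein-Markov inequality, exactly as in the proof of Proposition \ref{approxsect}, shows that $\phi_k$ agrees with its sup-norm counterpart up to $o(1)$, so $(\sup_{\ell\ge k}\phi_\ell)^*$ decreases (along the natural subsequence in $k$) to $P_{W_\bullet}(\phi)$ on the complement of a pluripolar set.

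Next I would prove that $\phi_k\to P_{W_\bullet}(\phi)$ in $L^1_{\mathrm{loc}}$ on the ample locus. This is where the ample series hypothesis is essential: by assumption there exists $m\gg 0$ and an ample line bundle $A$ whose section ring embeds into $\bigoplus_k W_{km}$, so the Kodaira map defined by $W_{km}$ is a birational embedding on a dense open set for $k$ large. Pulling back Fekete-point/Bergman-kernel estimates for $A$ through this map, together with sub/superadditivity of $\dim W_k$ that follows from the graded algebra structure, gives $\phi_k\to P_{W_\bullet}(\phi)$ in the required sense, and in particular $P_{W_\bullet}(\phi)$ has minimal singularities as an element of $PSH(X,L)$ when viewed on the ample locus.

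With convergence of envelopes in hand, the third step is the convergence of non-pluripolar Monge-Amp\`ere masses $\int_X MA(\phi_k)\to \int_X MA(P_{W_\bullet}(\phi))$, which follows from the continuity of the non-pluripolar Monge-Amp\`ere operator along sequences that are uniformly bounded and decrease a.e.\ to a limit with small unbounded locus (this is exactly the $L^1$-continuity used in \cite{BEGZ} and in the proof of Theorem \ref{bvolthm}). Combined with the Bergman-kernel asymptotic
\[
\frac{n!}{k^n}\dim W_k=\frac{n!}{k^n}\int_X K_k e^{-k\phi}dV \longrightarrow \int_X MA\bigl(P_{W_\bullet}(\phi)\bigr),
\]
which one obtains by applying the classical dimension-density asymptotics ($K_k e^{-k\phi}dV/k^n\to MA$ weakly) adapted to the graded subseries, this yields the desired equality.

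The main obstacle is precisely this last adaptation: in the full section ring case, Berman's pointwise Bergman-kernel asymptotics give the convergence of $K_k e^{-k\phi}dV/k^n$ directly, whereas for a general graded linear series with only an ample series one has no such pointwise statement. I would circumvent this by working globally with the total mass only, exploiting the ample-series hypothesis to sandwich $\textrm{vol}(W_\bullet)$ between $\textrm{vol}(A)$-type quantities coming from the embedded ample series on modifications, and then using monotonicity and homogeneity to pass to the limit --- mirroring the argument by which Theorem \ref{bvolthm} is extended from ample to big $\mathbb{R}$-line bundles.
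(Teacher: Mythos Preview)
The paper does not give its own proof of this statement: Theorem \ref{Histhm2} is quoted verbatim as \cite[Thm.~3]{Hisamoto2} and used as a black box. So there is nothing in the paper to compare your argument against; what follows is an assessment of your sketch on its own terms.

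Your overall strategy---adapt the Berman--Boucksom equilibrium/Bergman-kernel machinery from the full section ring to $W_\bullet$, using the ample-series hypothesis in place of ampleness---is the right one, and is indeed how Hisamoto proceeds. However, the sketch as written has a genuine gap at the crucial point. The displayed ``Bergman-kernel asymptotic''
\[
\frac{n!}{k^n}\int_X K_k\,e^{-k\phi}\,dV \;\longrightarrow\; \int_X MA\bigl(P_{W_\bullet}(\phi)\bigr)
\]
is not a consequence of anything you have established up to that point. Knowing that $\phi_k\to P_{W_\bullet}(\phi)$ in $L^1_{\mathrm{loc}}$ (or even locally uniformly on a Zariski-open set) does not by itself control the integral of $e^{k(\phi_k-\phi)}$: the integrand is exponentially sensitive to $\phi_k-\phi$, and the convergence $\int_X MA(\phi_k)\to\int_X MA(P_{W_\bullet}(\phi))$ you invoke is a different statement (and is also not justified, since $\phi_k$ is not monotone and need not have uniformly small unbounded locus). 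You correctly identify this as the obstacle, but the proposed fix---``sandwich $\textrm{vol}(W_\bullet)$ between $\textrm{vol}(A)$-type quantities''---is too vague to count as a proof.

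What is actually needed here is a Fujita-type approximation: the ample-series hypothesis gives, for each $m$, a birational modification $\mu_m:X_m\to X$ and a decomposition $\mu_m^*(mL)=A_m+E_m$ with $A_m$ ample (or big and nef) and $E_m$ effective, coming from resolving the base ideal of $W_m$, such that $\textrm{vol}(A_m)/m^n\nearrow \textrm{vol}(W_\bullet)$. One then shows that the equilibrium metrics $P_{A_m}(\mu_m^*\phi)$ increase (after pushing forward and normalising) to $P_{W_\bullet}(\phi)$, and applies the already-known big-line-bundle case (Theorem \ref{bvolthm}) on each $X_m$ together with monotone continuity of the non-pluripolar Monge--Amp\`ere mass. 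This is the ``sandwich'' you gesture at, but both the approximation statement for the envelopes and the passage to the limit in the Monge--Amp\`ere masses require real work that your sketch does not supply.
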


A singular positive metric $\psi\in PSH(X,L)$ gives rise to a graded linear series $W^{\psi}_{\bullet}$ in the following way: $$W^{\psi}_k:=\{s\in H^0(X,kL): \ln|s|^2\leq k\psi+O(1)\}.$$ 

\begin{theorem} \label{volseries}
If $W^{\psi}_{\bullet}$ contains an ample series then $$\textrm{vol}(W^{\psi}_{\bullet})\leq \int_X MA(\psi).$$
\end{theorem}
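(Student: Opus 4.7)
The strategy is to combine Hisamoto's formula for the volume of a graded linear series (Theorem \ref{Histhm2}) with the comparison principle for Monge-Amp\`ere masses (Theorem \ref{compsing}). Fixing a smooth metric $\phi$ on $L$, the hypothesis that $W^{\psi}_{\bullet}$ contains an ample series allows one to apply Theorem \ref{Histhm2} to obtain
\[
\textrm{vol}(W^{\psi}_{\bullet}) = \int_X MA(P_{W^{\psi}_{\bullet}}(\phi)),
\]
reducing the problem to showing $\int_X MA(P_{W^{\psi}_{\bullet}}(\phi)) \leq \int_X MA(\psi)$.

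The plan is to derive this from Theorem \ref{compsing} by establishing the envelope comparison
\[
P_{W^{\psi}_{\bullet}}(\phi) \leq \psi + O(1)
\]
globally on $X$. By the definition of $W^{\psi}_{k}$, any contributing section $s$ satisfies $\ln|s|^2 \leq k\psi + C_s$ for some $C_s < \infty$, so $(1/k)\ln|s|^2 \leq \psi + C_s/k$. A naive estimate gives $C_s \leq k\sup_X(\phi-\psi)$, which is only finite when $\psi$ is bounded below. In the general case one can instead work with the $\psi$-normalized envelope $\sup^*\{(1/k)\ln|s|^2 : s \in W^{\psi}_{k},\ ||s||_{k\psi,\infty} \leq 1\}$, which is trivially bounded above by $\psi$, and show that it differs from $P_{W^{\psi}_{\bullet}}(\phi)$ by at most a uniform $O(1)$ term via a Bernstein-Markov-type comparison of the $\phi$- and $\psi$-sup norms along extremal sections.

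Once the envelope comparison is in hand, both $\psi$ and $P_{W^{\psi}_{\bullet}}(\phi)$ lie in $PSH(X,L)$ and have small unbounded locus ($\psi$ because $L$ is big, and the envelope because it is dominated by the smooth metric $\phi$), so Theorem \ref{compsing} applied to $P_{W^{\psi}_{\bullet}}(\phi) \leq \psi + O(1)$ yields $\int_X MA(P_{W^{\psi}_{\bullet}}(\phi)) \leq \int_X MA(\psi)$, which combined with the preceding display completes the proof. The main obstacle is the envelope comparison itself: controlling the constants $C_s/k$ uniformly is the technical heart of the argument, since the $\phi$-normalization does not directly bound the sections near the singular set of $\psi$, and one must exploit the fact that sections with disproportionately large $C_s/k$ are effectively suppressed in the upper semicontinuous regularization defining the envelope.
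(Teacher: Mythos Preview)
Your overall strategy matches the paper's: apply Hisamoto's Theorem \ref{Histhm2} to reduce to $\int_X MA(P_{W^\psi_\bullet}(\phi)) \leq \int_X MA(\psi)$, then invoke the comparison principle. The divergence is in how you establish the comparison, and that is where a real gap appears.

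You aim for a global bound $P_{W^\psi_\bullet}(\phi) \leq \psi + O(1)$, and you correctly identify the obstacle: the constants $C_s/k$ need not be uniform in $k$. Your proposed fix, a ``Bernstein--Markov-type comparison'' between the $\phi$- and $\psi$-sup norms, is not a standard result and you give no indication of how to prove it. The Bernstein--Markov property invoked elsewhere in the paper compares a sup norm to an $L^2$ norm against a smooth volume form; comparing two sup norms, one with a smooth weight and one with a possibly very singular weight $\psi$, is a genuinely different problem, and there is no reason to expect $\|s\|_{k\psi,\infty}\le e^{Ck}\|s\|_{k\phi,\infty}$ uniformly for $s\in W^\psi_k$. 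Without this, the argument is incomplete.

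The paper sidesteps the difficulty entirely by truncating. Set
\[
\psi_m := \sup\nolimits^*\Bigl\{\tfrac{1}{k}\ln|s|^2 : s\in W^\psi_k\cap B_1(kL,k\phi),\ 1\le k\le m\Bigr\}.
\]
For each fixed $k$, the $\phi$- and $\psi$-sup norms are two norms on the \emph{finite-dimensional} space $W^\psi_k$, hence equivalent; this yields $\psi_m \le \psi + O_m(1)$ with a constant depending on $m$. The comparison principle then gives $\int_X MA(\psi_m)\le \int_X MA(\psi)$ for each $m$. Since $\psi_m$ increases to $P_{W^\psi_\bullet}(\phi)$ almost everywhere, continuity of the Monge--Amp\`ere operator along increasing sequences gives $\int_X MA(P_{W^\psi_\bullet}(\phi))=\lim_m \int_X MA(\psi_m)\le \int_X MA(\psi)$. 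The $m$-dependence of the constants is harmless because one never needs to bound the envelope itself, only its mass.

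A minor point: your justification that $\psi$ has small unbounded locus ``because $L$ is big'' is not correct for an arbitrary $\psi\in PSH(X,L)$; bigness ensures this only for metrics with minimal singularities. The paper's proof tacitly assumes small unbounded locus for $\psi$, which holds in the intended application to $\phi_{L,p}$.
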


\begin{proof}
By Theorem \ref{Histhm2} we only need to prove that $$\int_X MA(P_{W^{\psi}_{\bullet}}(\phi))\leq \int_X MA(\psi).$$ For any $m\in \mathbb{N}$ we let $$\psi_m:=\sup^*\{\frac{1}{k}\ln|s|^2:s\in W^{\psi}_k\cap B_1(kL,k\phi), 1\leq k \leq m\}.$$ One easily sees that $\psi_m\leq \psi+O(1)$. Since both $\psi_m$ and $\psi$ has small unbounded locus it follows from the comparison principle that $$\int_X MA(\psi_m)\leq \int_X MA(\psi).$$ We also have that $\psi_m$ increases to $P_{W^{\psi}_{\bullet}}(\phi)$ a.e., and so by continuity properties of the Monge-Amp\`ere operator we get that $$\int_X MA(P_{W^{\psi}_{\bullet}})=\lim_{m\to \infty}\int_X MA(\psi_m),$$ which proves the theorem.
\end{proof}

\subsection{$S^1$-invariant psh functions on $\mathbb{C}^n$} \label{Secinv}

We say that a psh function $v$ on $\mathbb{C}^n$ is $\lambda$-\\loghomogeneous if for every $\tau\in \mathbb{C}$ and $z\in \mathbb{C}^n$ we have that $$v(\tau z)=v(z)+\lambda \ln|\tau|^2.$$ 

Note that a $\lambda$-loghomogeneous psh function $v$ can be identified with an element in $PSH(\mathbb{P}^{n-1},\lambda \mathcal{O}(1)).$ On e.g. the affine part $U_n:=\{[z_1:...:z_n]: z_n\neq 0\}\subseteq \mathbb{P}^{n-1}$ the metric is given by the psh function $\phi_i(z_1,...,z_{n-1}):=v(z_1,...,z_{n-1},1)$. When there is no risk of confusion we will denote this metric also by $v.$

The following Lemma says that any $S^1$-invariant psh function can be split into its $\lambda$-\\loghomogeneous parts. 

\begin{lemma} \label{lem2}
Let $u$ be an $S^1$-invariant psh function on a complex vector space $V$. Then $u$ can be written as a supremum $$u=\sup_{\lambda\in \mathbb{R}_{\geq 0}}\{v_{\lambda}(z)\}$$ where $v_{\lambda}$ is a concave family of $\lambda$-homogeneous psh functions. The $v_{\lambda}$:s are given by the formula
\begin{equation} \label{formula1}
v_{\lambda}(z)=\inf_{\tau\in \mathbb{C}}\{u(\tau z)-\lambda \ln|\tau|^2\}.
\end{equation} 
\end{lemma}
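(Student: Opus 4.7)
The plan is to take the formula in the statement as \emph{defining} $v_\lambda$ and verify the four required properties: pshness of each $v_\lambda$, $\lambda$-loghomogeneity, concavity of the family $\lambda \mapsto v_\lambda$, and the supremum identity $u = \sup_\lambda v_\lambda$. Pshness will come from Kiselman's minimum principle, $\lambda$-loghomogeneity and concavity are straightforward manipulations of the infimum, and the supremum identity reduces, orbit by orbit along the $\mathbb{C}^*$-action through the origin, to the one-variable Fenchel--Moreau duality.

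For the first three properties I would first set $F(z,\tau) := u(\tau z) - \lambda \ln|\tau|^2$; this is psh on $V \times \mathbb{C}^*$, being a pullback of $u$ along the holomorphic map $(z,\tau) \mapsto \tau z$ minus the pluriharmonic term $\lambda \ln|\tau|^2$, and it is $S^1$-invariant in $\tau$ by the assumption on $u$. Kiselman's minimum principle then yields that $z \mapsto \inf_\tau F(z,\tau)$ is psh, after an upper semicontinuous regularization if needed. The substitution $\tau \mapsto \tau/\sigma$ in the infimum gives $v_\lambda(\sigma z) = v_\lambda(z) + \lambda \ln|\sigma|^2$, i.e.\ $\lambda$-loghomogeneity. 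Concavity in $\lambda$ is automatic because for each fixed $z$ and $\tau$ the expression $u(\tau z) - \lambda \ln|\tau|^2$ is affine in $\lambda$, and an infimum of affine functions is concave.

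The main step is establishing $u(z) = \sup_{\lambda \geq 0} v_\lambda(z)$. Fixing $z \in V$, the $S^1$-invariance of $u$ together with pshness forces $\tau \mapsto u(\tau z)$ to be a radial subharmonic function on $\mathbb{C}$, hence $g_z(s) := u(e^{s/2}z)$ is convex and non-decreasing in $s \in \mathbb{R}$ (the classical fact that radial subharmonic functions are non-decreasing in $|\tau|$). Rewriting, $v_\lambda(z) = \inf_s \{g_z(s) - \lambda s\} = -g_z^*(\lambda)$, where $g_z^*$ denotes the Legendre--Fenchel transform. Applying Fenchel--Moreau to the lower semicontinuous convex function $g_z$ gives $g_z(0) = \sup_\lambda \{-g_z^*(\lambda)\}$, and the monotonicity of $g_z$ forces $g_z^*(\lambda) = +\infty$ for $\lambda < 0$ (let $s \to -\infty$, so $\lambda s \to +\infty$ while $g_z(s) \leq g_z(0)$), so the supremum is really over $\lambda \geq 0$. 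Evaluating at $s = 0$ thus yields $u(z) = g_z(0) = \sup_{\lambda \geq 0} v_\lambda(z)$.

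The main obstacle I anticipate is semicontinuity bookkeeping around Kiselman's principle, which can produce an infimum that fails to be usc on a (pluripolar) exceptional set; one has to pass to the usc regularization and verify this does not disturb the identities on the full-measure set where it agrees with the raw infimum. The Fenchel--Moreau step itself is routine once the fact that $g_z$ is non-decreasing is in hand, and that monotonicity is the small but essential input that forces the range of $\lambda$ to lie in $[0,\infty)$ rather than all of $\mathbb{R}$.
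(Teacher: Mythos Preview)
Your proposal is correct and follows essentially the same route as the paper: define $v_\lambda$ by the formula, invoke Kiselman's minimum principle for plurisubharmonicity, read off $\lambda$-loghomogeneity from the definition, and reduce the supremum identity to one-variable convexity of $g_z(s)=u(e^{s/2}z)$. The only differences are cosmetic: the paper uses the elementary supporting-line formulation (there exists $\lambda$ with $g(t)\geq g(0)+\lambda t$, hence $v_\lambda(z_0)=u(z_0)$) where you invoke Fenchel--Moreau, and the paper omits both the concavity verification and the argument that the relevant $\lambda$ is nonnegative, points on which you are more careful.
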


\begin{proof}
So we let $u$ be $S^1$-invariant and psh and define $v_{\lambda}$, $\lambda\in \mathbb{R}_{\geq 0}$, by the expression in (\ref{formula1}). That $v_{\lambda}$ is $\lambda$-loghomogeneous is apparent from the definition, and that it is psh follows from Kiselman's minimum principle \cite{Kiselman}.
   
Pick $z_0\in V$. From the definition we see that $v_{\lambda}(z_0)\leq u(z_0)$ and so $$u\geq\sup_{\lambda\in \mathbb{R}_{\geq 0}}\{v_{\lambda}(z)\}.$$ Since $u$ is psh and $S^1$-invariant it follows that $g(t):=u(e^{t/2}z_0)$ is a convex function in $t\in \mathbb{R}.$ There is then a $\lambda$ such that $g(t)\geq g(0)+\lambda t.$ It follows that for that $\lambda,$ $v_{\lambda}(z_0)=u(z_0)$ and thus $$u=\sup_{\lambda\in \mathbb{R}_{\geq 0}}\{v_{\lambda}(z)\}.$$ 
\end{proof}

\begin{lemma} \label{splitting2}
Assume that $v_{\lambda}$ is a concave family of $\lambda$-loghomogeneous psh functions on $\mathbb{C}^n$, $\lambda\in [0,C]$ (or $\mathbb{Q}\cap [0,C]$), with $v_0\equiv c\neq -\infty.$ Then if we set $u':=\sup_{\lambda}^*\{v_{\lambda}(z)\}$ we have that $$\sup_{\lambda}\{v_{\lambda}(z)\}=u'+O(1).$$ Thus if we let $$v_{\lambda}'(z):=\inf_{\tau\in \mathbb{C}}\{u'(\tau z)-\lambda \ln|\tau|^2\}$$ then $$v_{\lambda}=v'_{\lambda}+O(1).$$
\end{lemma}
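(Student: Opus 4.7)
The plan is to prove the second identity $v_\lambda = v'_\lambda + O(1)$ uniformly in $\lambda$ first, and derive the first identity from it by taking suprema. Since each $v_\lambda$ is $S^1$-invariant (loghomogeneity forces $S^1$-invariance) and the upper regularization preserves $S^1$-invariance, $u'$ is $S^1$-invariant and plurisubharmonic, so Lemma \ref{lem2} applied to $u'$ yields the honest (non-regularized) decomposition $u' = \sup_\lambda v'_\lambda$ with $v'_\lambda(z) = \inf_\tau[u'(\tau z) - \lambda \ln |\tau|^2]$ forming a concave family of $\lambda$-loghomogeneous psh functions. The easy inequality $v_\lambda \le v'_\lambda$ comes from $v_\lambda \le \sup^*_\mu v_\mu = u'$ combined with $\lambda$-loghomogeneity: for every $\tau \in \mathbb{C}^*$ we have $v_\lambda(z) = v_\lambda(\tau z) - \lambda \ln |\tau|^2 \le u'(\tau z) - \lambda \ln |\tau|^2$, and taking $\inf_\tau$ gives $v_\lambda \le v'_\lambda$.

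For the reverse bound $v'_\lambda \le v_\lambda + M$, I use that $v'_\lambda - v_\lambda$ is $0$-loghomogeneous, so it suffices to bound the difference uniformly on the unit sphere. By $S^1$-invariance of $u'$, $v'_\lambda(\theta) = \inf_{r > 0}[u'(r\theta) - 2\lambda \ln r]$. Along the ray $r \mapsto r\theta$, the function $u(r\theta) = \sup_\mu[v_\mu(\theta) + 2\mu \ln r]$ is a supremum of affine functions in $\ln r$, hence convex in $\ln r$ and automatically continuous there, so $u$ and $u'$ agree on the ray up to a uniform constant $M$ coming from transverse fluctuations (bounded because the family $v_\mu$ is uniformly bounded above on the sphere, thanks to the assumption $v_0 \equiv c$ and concavity). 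Substituting the explicit formula for $u$ and setting $s = 2 \ln r$,
\[
v'_\lambda(\theta) \;\le\; M + \inf_{s \in \mathbb{R}} \sup_{\mu \in [0,C]} \bigl[\, v_\mu(\theta) + (\mu - \lambda) s \,\bigr].
\]
The concavity of $\mu \mapsto v_\mu(\theta)$ on the compact interval $[0,C]$ together with its upper semicontinuity at the endpoints (at $\mu = 0$ this is forced by $v_0 \equiv c$ being a finite constant, together with the fact that $v'_0$, a $0$-loghomogeneous psh function on $\mathbb{C}^n$, is necessarily a genuine constant) permits Sion's minimax theorem to exchange $\inf_s$ and $\sup_\mu$. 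Since $\inf_s (\mu - \lambda) s = -\infty$ unless $\mu = \lambda$, the right-hand side collapses to $v_\lambda(\theta) + M$, yielding the desired uniform bound.

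Once $v'_\lambda \le v_\lambda + M$ is established, the second identity is complete. The first follows by taking $\sup_\lambda$: $u' = \sup_\lambda v'_\lambda \le \sup_\lambda v_\lambda + M$, and combined with the trivial $\sup_\lambda v_\lambda \le u'$ from the definition of the upper regularization this gives $\sup_\lambda v_\lambda = u' + O(1)$. The main technical obstacle is the minimax exchange itself: concavity in $\mu$ is standard, but upper semicontinuity at the endpoints of $[0,C]$ is subtle, and it is precisely this regularity that is forced by the hypothesis that $v_0$ is a finite constant together with the $\lambda$-loghomogeneity constraint on the family.
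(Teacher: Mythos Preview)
Your argument contains a circularity at the crucial step. You assert that ``$u$ and $u'$ agree on the ray up to a uniform constant $M$ coming from transverse fluctuations'', but this is precisely the first identity $\sup_\lambda v_\lambda = u' + O(1)$ that you later claim to derive as a \emph{consequence} of the second. Continuity of $u$ along each ray $r\mapsto r\theta$ (which is genuine, since $u(r\theta)$ is convex in $\ln r$) says nothing about $u'(r\theta)-u(r\theta)$: the upper semicontinuous regularization is taken over all of $\mathbb{C}^n$, not along the ray, so $u'$ can pick up mass from nearby rays $\theta'$ where $\sup_\mu v_\mu(\theta')$ is larger. Your justification (``the family $v_\mu$ is uniformly bounded above on the sphere'') gives $u'(\theta)-u(\theta)\le C'-c$ \emph{on the unit sphere}, but extending this to a bound valid for all $r$ requires controlling the slopes of the two convex functions at infinity, and when $v_C(\theta)=-\infty$ this control is not available from your argument. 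In short, the minimax step would work \emph{given} $u'\le u+M$, but you have not established that inequality independently.

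The paper proceeds in the opposite order for exactly this reason. It first proves $u'\le \sup_\lambda v_\lambda + O(1)$ directly, via a concavity estimate: for $\eta\ge\epsilon$ and $|z|=1$ one shows $v_{\eta+\delta}(z)\le v_\eta(z)+\delta C'/\epsilon$, which by loghomogeneity globalizes to $v_{\eta+\delta}(z)\le v_\eta(z)+\delta\ln|z|^2+\delta C'/\epsilon$. This lets one replace the supremum over $\lambda\in[0,C]$ by a finite supremum over a grid $\{k\epsilon\}$ at the cost of an error $\epsilon\ln_+|z|^2+C'$, and letting $\epsilon\to 0$ kills the unbounded term. Only then does the paper invoke Legendre duality to pass from $u=u'+O(1)$ to $v_\lambda=v'_\lambda+O(1)$. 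Your minimax formulation of that final step is equivalent to the paper's Legendre argument, but it cannot replace the first step.
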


\begin{proof}
Without loss of generality we can assume that $v_0\equiv 0. $Let $$C':=\sup_{|z|\leq 1}\{u'(z)\}.$$ Also pick some $\epsilon>0.$ One can then use the concavity of $v_{\lambda}$ to show that for $\eta\geq \epsilon,$ $\delta>0$ and $|z|=1,$ $$v_{\eta+\delta}(z)\leq v_{\eta}(z)+\frac{\delta C'}{\epsilon}.$$ By homogeneity this implies that for all $z$
\begin{equation} \label{homest1}
v_{\eta+\delta}(z)\leq v_{\eta}(z)+\delta\ln|z|^2+\frac{\delta C'}{\epsilon}.
\end{equation}
It follows that $$\sup_{\lambda\in[\eta,\eta+\epsilon]}^*\{v_{\lambda}(z)\}\leq v_{\eta}(z)+\epsilon\ln_+|z|^2+C'.$$ We also have that $$\sup_{\lambda\in[0,\epsilon]}^*\{v_{\lambda}(z)\}\leq \epsilon\ln_+|z|^2+C',$$ and thus 
\begin{eqnarray*}
\sup_{\lambda\in [0,C]}^*\{v_{\lambda}(z)\}\leq \sup_{k\in \mathbb{N}\cap [O,C/\epsilon]}\{v_{k\epsilon}(z)\}+\epsilon\ln_+|z|^2+C'\leq \\ \leq \sup_{\lambda\in \mathbb{R}_{\geq 0}}\{v_{\lambda}(z)\}+\epsilon\ln_+|z|^2+C'.
\end{eqnarray*} 
Letting $\epsilon$ tend to zero we finally get that $$u'\leq \sup_{\lambda}\{v_{\lambda}(z)\}+O(1).$$ 

Write $$u:=\sup_{\lambda}\{v_{\lambda}\}.$$ Pick $z_0\in V$ and write $g(t):=u(e^{t/2}z_0)$. Also write $f(\lambda):=v_{\lambda}(z_0)$. Note that by the $\lambda$-loghomogeneity of $v_{\lambda}$ we have that $v_{\lambda}(e^{t/2}z_0)=v_{\lambda}(z_0)+\lambda t=f(\lambda) + \lambda t.$  It follows that $$g(t)=\sup_{\lambda}\{v_{\lambda}(e^{t/2}z_0)\}=\sup_{\lambda}\{\lambda t-(-f(\lambda))\},$$ i.e. $g$ is the Legendre transform of the convex function $-f.$ From the involution property of the Legendre transform it follows that $-f$ is the Legendre transform of $g,$ i.e. $$-f(\lambda)=\sup_{t}\{t\lambda-g(t)\}.$$ If we let $e^{t/2}=|\tau|$ and switch signs this shows that $$v_{\lambda}(z)=\inf_{\tau\in \mathbb{C}}\{u(\tau z)-\lambda \ln|\tau|^2\}.$$ Using that $u=u'+O(1)$ then shows that $v_{\lambda}=v'_{\lambda}+O(1).$ 
\end{proof}

\section{The toric picture} \label{sectoric}

In this section we will define the canonical growth condition in the toric case and show how to prove the main results in this setting.   

Let $\Delta$ be a polytope given as the convex hull of a finite number of integer points in $\mathbb{R}^n.$ Also assume that the polytope is Delzant, i.e. each vertex has a neighbourhood isomorphic to a neighbourhood of the origin in the positive orthant, via translations and the action of $GL(n,\mathbb{Z})$. It is classical that such a polytope corresponds to a toric projective manifold $X_{\Delta}$ equipped with an ample toric line bundle $L_{\Delta}$, and that $\Delta$ can be recovered from the data $(X_{\Delta},L_{\Delta})$. The group $(\mathbb{C}^*)^n$ acts algebraically on $X_{\Delta}$ and this action lifts to $L_{\Delta}$. There is an open dense orbit in $X_{\Delta}$ and on that orbit the line bundle has a natural trivialization.  

The vertices of $\Delta$ correspond to fixed points of the torus action. Pick such a fixed point $p$, and move $\Delta$ so that it lies in the positive orthant with the corresponding vertex at the origin. This means that we have chosen a particular set of coordinates on the open $(\mathbb{C}^*)^n$-orbit, so we can write $(\mathbb{C}^*)^n \subseteq X_{\Delta}$. 

A fundamental fact is that the polytope encodes the spaces $H^0(X_{\Delta},kL_{\Delta})$, namely: $$H^0(X_{\Delta},kL_{\Delta}) \cong \oplus_{\alpha\in k\Delta_{\mathbb{Z}}}\langle z^{\alpha}\rangle,$$ where $k\Delta_{\mathbb{Z}}$ is shorthand for $k\Delta \cap \mathbb{Z}^n$ and the equivalence map is given by restriction to the $(\mathbb{C}^*)^n$, using the natural trivialization. Let $s_{\alpha}$ denote the section of $kL_{\Delta}$ whose restriction is $z^{\alpha}$. We can use the sections to extend the embedding of $(\mathbb{C}^*)^n$ to an embedding of $\mathbb{C}^n$ in $X_{\Delta}$. Note that the fixed point $p$ now lies at the origin.

For $k$ large enough $kL_{\Delta}$ is very ample which means that the metric $$\phi:=\frac{1}{k} \log \left(\sum_{\alpha \in k\Delta_{\mathbb{Z}}}|s_{\alpha}|^2\right)$$ is positive. Via the same trivialization as above this metric restricts to the function $$\phi_{\Delta,p}:=\frac{1}{k} \log \left(\sum_{\alpha \in k\Delta_{\mathbb{Z}}}|z^{\alpha}|^2\right)$$ on $\mathbb{C}^n \subseteq X_{\Delta}$. This function is clearly plurisubharmonic and invariant under the action of the real torus $(S^1)^n\subset (\mathbb{C}^*)^n.$

We define a growth condition on a complex vector space $V$ to be an equivalence class of plurisubharmonic functions on $V$, where two plurisubharmonic functions are equivalent if their difference is bounded. If $u$ is plurisubharmonic we let $u+O(1)$ denote the associated growth condition.

Returning to the toric picture, we can choose an identification of $\mathbb{C}^n$ with the tangent space $T_p X_{\Delta}$ and thus think of $\phi_{\Delta,p}$ as being defined on $T_p X.$ 

If $m$ is some other natural number such that $mL$ is very ample then $\phi':=\frac{1}{m} \log \left(\sum_{\alpha \in m\Delta_{\mathbb{Z}}}|s_{\alpha}|^2\right)$ is a different positive metric of $L_{\Delta}$, and the restriction of $\phi'$ to $\mathbb{C}^n \subseteq X_{\Delta}$ would yield a different plurisubharmonic function $\phi'_{\Delta,p}$. But since the difference of any two smooth metrics of the same line bundle is a bounded function we will nevertheless have that $$\phi'_{\Delta,p}=\phi_{\Delta,p}+O(1).$$ This shows that $\phi_{\Delta,p}+O(1)$ is a well-defined growth condition on $T_p X_{\Delta}$, independent of any choice we have made. We will call it the canonical toric growth condition.

Since $\phi_{\Delta,p}$ is $(S^1)^n$-invariant we have that $\phi_{\Delta,p}(z)=u(\ln|z_1|^2,...,\ln|z_n|^2)$ where $u$ is convex. From the formula for $\phi_{\Delta,p}$ one sees that the image of the gradient of $u$ is $\Delta^{\circ}$, which shows how to recover $\Delta$ from the growth condition.

Let us write down three basic ways in which information about $(X_{\Delta},L_{\Delta})$ is encoded by the growth condition. They correspond to Theorem \ref{mainthmvolume}, Theorem \ref{mainthmsesh} and Theorem \ref{mainthmfit} applied to the toric setting.

\begin{proposition} \label{proptoric1}
$$(L_{\Delta}^n)=\int_{T_p X_{\Delta}}(dd^c\phi_{\Delta,p})^n.$$
\end{proposition}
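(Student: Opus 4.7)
The plan is to realize $\phi_{\Delta,p}$ as the coordinate expression of a genuine smooth positive metric on $L_\Delta$ in the natural trivialization over the affine chart $\mathbb{C}^n \subset X_{\Delta}$, and then transfer the integral over $T_p X_{\Delta} \cong \mathbb{C}^n$ to an integral over all of $X_{\Delta}$, where it becomes the top self-intersection by standard Chern–Weil theory.

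First, I would fix $k$ large enough that $kL_\Delta$ is very ample and consider the positive metric $\phi := \frac{1}{k}\log\sum_{\alpha\in k\Delta_\mathbb{Z}}|s_\alpha|^2$ of $L_\Delta$. By construction, its restriction to $\mathbb{C}^n$ in the natural trivialization is exactly $\phi_{\Delta,p}$. Since $dd^c\phi$ is a smooth K\"ahler form on $X_\Delta$ representing $c_1(L_\Delta)$, we have $\int_{X_\Delta}(dd^c\phi)^n = (L_\Delta^n)$.

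Next I would observe that the complement $X_\Delta \setminus \mathbb{C}^n$ is a proper closed toric subvariety, namely the union of the toric prime divisors corresponding to the facets of $\Delta$ not containing the vertex $p$, and hence has real Lebesgue measure zero. The smooth volume form $(dd^c\phi)^n$ therefore puts no mass on it, so
$$\int_{\mathbb{C}^n}(dd^c\phi)^n = \int_{X_\Delta}(dd^c\phi)^n = (L_\Delta^n).$$
Transporting this equality via the chosen linear isomorphism $\mathbb{C}^n \cong T_p X_\Delta$, which preserves the Monge–Amp\`ere operator, then yields the proposition.

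There is no serious obstacle: the whole argument is essentially a translation of the toric dictionary between metrics and polytopes, and the only point worth checking is that the Monge–Amp\`ere mass lost by restricting from $X_\Delta$ to the affine chart is zero, which holds because the complement is a proper subvariety and the form is smooth. As a sanity check, one could also give a purely convex-analytic proof: exploit the $(S^1)^n$-invariance to write $\phi_{\Delta,p}(z) = u(\log|z_1|^2,\ldots,\log|z_n|^2)$ with $u$ convex, apply the classical identity $\int_{\mathbb{C}^n}(dd^c\phi)^n = n!\,\mathrm{vol}(\nabla u(\mathbb{R}^n))$, and use the already-noted facts that $\nabla u(\mathbb{R}^n) = \Delta^\circ$ and $(L_\Delta^n) = n!\,\mathrm{vol}(\Delta)$ to conclude.
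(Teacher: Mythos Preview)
Your proof is correct and follows exactly the same route as the paper: identify $\phi_{\Delta,p}$ as the restriction of a positive metric $\phi$ on $L_\Delta$, use that $dd^c\phi$ represents $c_1(L_\Delta)$ to get $(L_\Delta^n)=\int_{X_\Delta}(dd^c\phi)^n$, and then pass to $\mathbb{C}^n\cong T_pX_\Delta$ since the complement has zero volume. Your additional convex-analytic sanity check via the gradient image of $u$ is a nice extra remark not present in the paper's proof.
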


\begin{proof}
Since the curvature form $dd^c\phi$ represents the first Chern class of $L_{\Delta}$ we get that $$(L_{\Delta}^n)=\int_{X_{\Delta}}(dd^c\phi)^n=\int_{T_p X_{\Delta}}(dd^c\phi_{\Delta,p})^n,$$ where we in the last step used an identification between $T_p X_{\Delta}$ and $\mathbb{C}^n\subseteq X_{\Delta}$ as above, and the fact that $X_{\Delta}\setminus \mathbb{C}^n$ has zero volume.
\end{proof}

It is well known that the Seshadri constants of a toric line bundle $L_{\Delta}$ can be read off from the moment polytope $\Delta.$ In particular, if $p$ is a fixed point for the torus action, and we put $\Delta$ in normal position, then $\epsilon(X,L,p)$ equals the supremum of $\lambda$ such that $\lambda \Sigma \subseteq \Delta$ ($\Sigma$ here denotes the standard simplex). Not surprisingly this can be calculated using the growth condition.  

\begin{proposition} \label{proptoric2}
$$\epsilon(X_{\Delta},L_{\Delta},p)=\sup\{\lambda: \lambda\ln(1+|z|^2)\leq \phi_{\Delta,p}+O(1)\}.$$
\end{proposition}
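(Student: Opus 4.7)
My plan is to prove both sides of the claimed equality reduce to a comparison of support functions of convex bodies, and then invoke the classical toric description of $\epsilon(X_\Delta,L_\Delta,p)$ recalled just before the statement.

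The first step is to identify $\phi_{\Delta,p}$ with a support function in logarithmic coordinates. Set $y_i:=\ln|z_i|^2$. From the defining formula, and using $\log\sum\leq \log(N\max)=\max+O(1)$ with $N=|k\Delta_{\mathbb{Z}}|$, I get
\begin{equation*}
\phi_{\Delta,p}(z)=\tfrac{1}{k}\max_{\alpha\in k\Delta_{\mathbb{Z}}}\langle\alpha,y\rangle+O(1).
\end{equation*}
Since $\Delta$ is Delzant with integer vertices, every vertex $v$ of $\Delta$ satisfies $kv\in k\Delta_{\mathbb{Z}}$, so the maximum of $\langle\beta,y\rangle$ over $\tfrac{1}{k}k\Delta_{\mathbb{Z}}$ is attained at a vertex of $\Delta$ and equals the support function $h_\Delta(y):=\sup_{\beta\in\Delta}\langle\beta,y\rangle$. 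Hence $\phi_{\Delta,p}(z)=h_\Delta(y)+O(1)$. An analogous elementary estimate shows
\begin{equation*}
\ln(1+|z|^2)=\max(0,y_1,\dots,y_n)+O(1)=h_\Sigma(y)+O(1),
\end{equation*}
so $\lambda\ln(1+|z|^2)=h_{\lambda\Sigma}(y)+O(1)$.

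Once both functions are identified with support functions, the inequality $\lambda\ln(1+|z|^2)\leq \phi_{\Delta,p}+O(1)$ translates to $h_{\lambda\Sigma}(y)\leq h_\Delta(y)+C$ for all $y\in\mathbb{R}^n$. The $\Leftarrow$ direction of the proposition is then immediate: if $\lambda\Sigma\subseteq\Delta$ then $h_{\lambda\Sigma}\leq h_\Delta$ pointwise. For the $\Rightarrow$ direction I use the positive $1$-homogeneity of support functions: replacing $y$ by $ty$ and dividing by $t>0$ yields $h_{\lambda\Sigma}(y)\leq h_\Delta(y)+C/t$, and letting $t\to\infty$ gives $h_{\lambda\Sigma}\leq h_\Delta$ everywhere, hence $\lambda\Sigma\subseteq\Delta$ by the standard duality between closed convex sets containing the origin and their support functions.

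Combining these two paragraphs gives
\begin{equation*}
\sup\{\lambda:\lambda\ln(1+|z|^2)\leq\phi_{\Delta,p}+O(1)\}=\sup\{\lambda:\lambda\Sigma\subseteq\Delta\},
\end{equation*}
and the right-hand side equals $\epsilon(X_\Delta,L_\Delta,p)$ by the well-known toric formula for the Seshadri constant recalled in the paragraph preceding the proposition.

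The only delicate point is the $O(1)$ estimate $\phi_{\Delta,p}(z)=h_\Delta(\log|z|^2)+O(1)$, since it requires the max over $k\Delta_{\mathbb{Z}}$ to recover $h_\Delta$ exactly (not just asymptotically as $k\to\infty$); the Delzant hypothesis, which ensures integer vertices, is precisely what makes this work. The passage from the bounded-difference inequality to pointwise comparison of support functions via rescaling is the other small subtlety, but it is a clean application of $1$-homogeneity.
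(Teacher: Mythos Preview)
Your proof is correct and takes essentially the same approach as the paper: the paper's proof is the one-line assertion that $\lambda\ln(1+|z|^2)\leq\phi_{\Delta,p}+O(1)$ iff $\lambda\Sigma\subseteq\Delta$, and you have simply made this explicit via the support-function identification and the homogeneity rescaling. One minor remark: the fact that the vertices of $\Delta$ are integer points comes from the paper's standing assumption that $\Delta$ is a lattice polytope (the convex hull of integer points), not from the Delzant condition per se, but this does not affect the argument.
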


\begin{proof}
From the formula for $\phi_{\Delta,p}$ one easily sees that $$\lambda\ln(1+|z|^2)\leq \phi_{\Delta,p}+O(1)$$ iff $\Delta$ contains $\lambda \Sigma$.
\end{proof}

\begin{proposition} \label{proptoric3}
Let $\omega_0=dd^c\phi_0$ be a K\"ahler form on $\mathbb{C}^n$. If for some isomorphism $\mathbb{C}^n\cong T_p X_{\Delta}$ and $\epsilon>0$ we have that \begin{equation} \label{grcondtor}
\phi_0\leq (1-\epsilon)\phi_{\Delta,p}+O(1)
\end{equation} 
then $\omega_0$ fits into $(X_{\Delta},L_{\Delta})$ at $p.$
\end{proposition}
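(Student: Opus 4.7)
The plan is to build, for each $R>0$, a K\"ahler form $\omega_R\in c_1(L_{\Delta})$ on $X_{\Delta}$ and take the K\"ahler embedding $f_R$ to be the restriction of the natural inclusion $\mathbb{C}^n\cong T_pX_{\Delta}\hookrightarrow X_{\Delta}$ (which sends $0$ to the fixed point $p$) to the ball $B_R$. The point is to arrange things so that on $B_R$ the local representative of $\omega_R$ in the natural trivialization coincides with $\omega_0$, i.e.\ its potential equals $\phi_0$ up to an additive constant.

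First I would fix a smooth positive metric $\phi$ of $L_{\Delta}$ whose restriction to $\mathbb{C}^n\subseteq X_{\Delta}$ is $\phi_{\Delta,p}$ (obtained, as in the excerpt, as $(1/k)\log\sum_{\alpha}|s_{\alpha}|^2$ for $k$ large). Using the isomorphism $\mathbb{C}^n\cong T_pX_{\Delta}$ from the hypothesis, I would view $\phi_0$ as a smooth strictly plurisubharmonic function on the same $\mathbb{C}^n$. Since the Delzant condition at the origin forces $k\Delta_{\mathbb{Z}}$ to contain all of the standard basis vectors $e_1,\dots,e_n$ for $k$ large, we get $\phi_{\Delta,p}(z)\geq (1/k)\log(1+|z|^2)$, so $\phi_{\Delta,p}$ is proper. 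Combined with the hypothesis $\phi_0\leq(1-\epsilon)\phi_{\Delta,p}+C$ this yields
\[
\phi_0-\phi_{\Delta,p}\;\leq\;-\epsilon\,\phi_{\Delta,p}+C\;\longrightarrow\;-\infty
\]
as $|z|\to\infty$, which is the key quantitative input.

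Now for a given $R$, I would choose a constant $M$ large enough that $\phi_0+M\geq\phi_{\Delta,p}+1$ on $B_R$ (both sides being bounded there) and, by the previous paragraph, choose $R'>R$ so that $\phi_0+M\leq\phi_{\Delta,p}-1$ on $\mathbb{C}^n\setminus B_{R'}$. Setting
\[
\tilde\phi_R:=\max\nolimits_\eta\bigl(\phi_0+M,\;\phi_{\Delta,p}\bigr)
\]
on $\mathbb{C}^n$ for a sufficiently small regularization parameter $\eta$, one obtains a smooth strictly plurisubharmonic function that agrees with $\phi_0+M$ on $B_R$ and with $\phi_{\Delta,p}$ on $\mathbb{C}^n\setminus B_{R'}$. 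Defining $\phi_R$ to equal $\tilde\phi_R$ on the preimage of $B_{R'}$ and to equal $\phi$ on $X_{\Delta}\setminus B_{R'}$ then yields a globally well-defined smooth positive metric of $L_{\Delta}$ (the overlap of the two definitions sits in a collar where $\tilde\phi_R=\phi_{\Delta,p}$, i.e.\ coincides with the restriction of $\phi$). The K\"ahler form $\omega_R:=dd^c\phi_R\in c_1(L_{\Delta})$ restricts to $\omega_0$ on $B_R$, which is exactly the fitting condition.

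The main obstacle is conceptual rather than technical: one must see that the $(1-\epsilon)$-factor in the growth hypothesis is essential, because it is precisely what gives the ``safety margin'' forcing $\phi_0+M$ to be eventually dominated by $\phi_{\Delta,p}$ even after the large constant $M$ has been added; without it, the regularized-max patch could not be closed up at infinity. Once that is in place, the remainder is the standard regularized-maximum gluing technique applied to two smooth strictly plurisubharmonic functions, and no deeper input is needed.
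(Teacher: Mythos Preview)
Your proposal is correct and follows essentially the same approach as the paper's own proof: add a large constant so that $\phi_0+M$ dominates $\phi_{\Delta,p}$ on $B_R$, use the growth hypothesis (with the crucial $(1-\epsilon)$ factor) to ensure $\phi_{\Delta,p}$ eventually dominates outside some $B_{R'}$, take a regularized maximum, and glue to the global positive metric $\phi$ to obtain $\omega_R$. The paper's write-up is terser and phrases the glued metric as $\phi+\max_{\epsilon}(\phi_0+C_R,\phi_{\Delta,p})-\phi_{\Delta,p}$, but the content is identical to what you outline.
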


\begin{proof}
We fix the identification $T_p X_{\Delta}=\mathbb{C}^n\subseteq X_{\Delta}$. Pick an $R>0$. Let also $\max_{\epsilon}$ be a regularized max function, i.e. a smooth convex function on $\mathbb{R}^2$ which coincides with the usual $max(x,y)$ when $|x-y|>\epsilon$ say. We can always find a constant $C_R$ such that $\max_{\epsilon}(\phi_0+C_R,\phi_{\Delta,p})$ is equal to $\phi_0+C_R$ on $B_R:=\{z:|z|\leq R\}$. On the other hand, by the growth condition (\ref{grcondtor}), the maximum will be equal to $\phi_{\Delta,p}$ for $|z|>R'$ say. It is also smooth and strictly plurisubharmonic, so we can define $\phi_R$ to be the positive metric of $L_{\Delta}$ equal to $$\phi+\max_{\epsilon}(\psi+C_R,\phi_{\Delta,p})-\phi_{\Delta,p}.$$ We then see that the standard inclusion $f_R:B_R \to \mathbb{C}^n\subseteq X_{\Delta}$ gives a K\"ahler embedding of $(B_R,{\omega_0}_{B_R})$ into $(X_{\Delta},dd^c\phi_R)$. Since $R$ was arbitrary this shows that $\omega_0$ fits into $(X_{\Delta},L_{\Delta})$ at $p$.  
\end{proof}

\section{The general construction}

We will now consider the general case where $X$ is some complex projective manifold, $L$ an ample line bundle on $X$ and $p$ a point in $X.$ We recall the construction of the canonical growth condition $\phi_{L,p}+O(1)$ on $T_p X$ from the introduction. In fact, in the construction we allow $L$ to be just big.

Pick local holomorphic coordinates $z_i$ centered at $p,$ and choose a local trivialization of $L$ near $p.$ Then any holomorphic section of $L$ (or more generally $kL$) can be written locally as a Taylor series $$s=\sum a_{\alpha}z^{\alpha}.$$ Let $ord_p(s)$ denote the order of vanishing of $s$ at $p.$ The leading order homogeneous part of $s$, which we will denote by $s_{hom},$ is then given by $$s_{hom}:=\sum_{|\alpha|=ord_p(s)}a_{\alpha}z^{\alpha},$$ or if $s\equiv 0$ we let $s_{hom}\equiv 0$. If $\gamma(t)$ is a smooth curve in $\mathbb{C}^n$ of the form $\gamma(t)=tz_0+t^2h(t)$ then one easily checks that $$\lim_{t\to 0}\frac{s(\gamma(t))}{t^{ord_p(s)}}=\lim_{t\to 0}\frac{s(tz_0)}{t^{ord_p(s)}}=s_{hom}(z_0),$$ which shows that $s_{hom}$ in fact is a well-defined homogeneous holomorphic function on the tangent space $T_p X.$ We also see that a different choice of trivialization would have the trivial effect of multiplying each $s_{hom}$ by a fixed constant (the quotient of the two trivializations at $p$ to the power $k$).

Pick a smooth (not necessarily positive) metric $\phi$ on $L$. This gives rise to supremum norms on each vector space $H^0(X,kL),$ by simply $$||s||^2_{k\phi,\infty}:=\sup_{x\in X}\{|s(x)|^2e^{-k\phi}\}.$$ Let $$B_1(kL,k\phi):=\{s\in H^0(X,kL): ||s||_{k\phi,\infty}\leq 1\}$$ be the corresponding unit balls in $H^0(X,kL)$.

\begin{definition}
Let $$\phi_{L,p}:=\sup^*\left\{\frac{1}{k}\ln|s_{hom}|^2: s\in B_1(kL,k\phi)\setminus \{0\},k\in \mathbb{N} \right\}.$$ Here $^*$ means taking the upper semicontinuous regularization. 
\end{definition}
\begin{proposition} \label{proplocbound}
The function $\phi_{L,p}$ is locally bounded from above and hance it is a plurisubharmonic function on $T_p X.$
\end{proposition}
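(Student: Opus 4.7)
My plan is to derive a Cauchy-type estimate for the coefficients of any section $s\in B_1(kL,k\phi)$ in a fixed holomorphic chart around $p$, and then use it to bound $\frac{1}{k}\ln|s_{\hom}|^2$ uniformly on compact subsets of $T_pX$. Once local upper boundedness is established, the plurisubharmonicity of $\phi_{L,p}$ follows from the standard fact that the upper semicontinuous regularization of a locally bounded family of psh functions is psh.

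Concretely, fix $R>0$ and identify $T_pX$ with $\mathbb{C}^n$ via the chosen local coordinates $z_i$, arranging the chart so that the closed polydisk $\{|z|\leq 2R\}$ lies in its domain. Set $C:=\sup_{|z|\leq 2R}\phi(z)$, a finite constant depending only on $\phi$ and $R$. For any $s\in B_1(kL,k\phi)$, the definition of the supremum norm gives $|s(z)|^2\leq e^{k\phi(z)}$, hence $|s(z)|\leq e^{kC/2}$ on $\{|z|\leq 2R\}$. Writing $s=\sum_\alpha a_\alpha z^\alpha$ in our trivialization, Cauchy's inequalities yield
\begin{equation*}
|a_\alpha|\leq \frac{e^{kC/2}}{(2R)^{|\alpha|}}\qquad\text{for every }\alpha\in\mathbb{N}^n.
\end{equation*}

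Now let $m:=\mathrm{ord}_p(s)$. For $z\in T_pX$ with $|z|\leq R$ we estimate
\begin{equation*}
|s_{\hom}(z)|\leq \sum_{|\alpha|=m}|a_\alpha|\,|z^\alpha|\leq \binom{m+n-1}{n-1}\,\frac{e^{kC/2}\,R^m}{(2R)^m}=\binom{m+n-1}{n-1}\,e^{kC/2}\,2^{-m},
\end{equation*}
so that
\begin{equation*}
\frac{1}{k}\ln|s_{\hom}(z)|^2\leq C+\frac{2}{k}\ln\binom{m+n-1}{n-1}-\frac{2m\ln 2}{k}.
\end{equation*}
The right-hand side is uniformly bounded above in both $s$ and $k$: if $m/k$ stays bounded, the logarithmic binomial term tends to $0$ as $k\to\infty$ and is finite for any fixed $k$; if $m/k\to\infty$, the linear-in-$m/k$ subtraction dominates the at-most-logarithmic binomial contribution and the whole expression tends to $-\infty$. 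The bound depends only on $R$ and on $\phi$, hence the family $\{(1/k)\ln|s_{\hom}|^2:s\in B_1(kL,k\phi)\setminus\{0\},\,k\in\mathbb{N}\}$ is locally uniformly bounded from above on $T_pX\cong\mathbb{C}^n$.

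The only mildly delicate point is the dichotomy on $m/k$, but no deep input about the Seshadri constant or volume is needed: the negative term $-2m(\ln 2)/k$ in the estimate absorbs arbitrary growth of the order of vanishing. Once local upper boundedness is in hand, Choquet's lemma together with the standard theorem that the usc regularization of a locally bounded supremum of psh functions is psh (see e.g.\ H\"ormander) shows that $\phi_{L,p}$ is plurisubharmonic on $T_pX$, completing the proof.
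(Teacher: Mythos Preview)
Your coefficient estimate via Cauchy's inequalities is fine and in spirit close to the paper's use of $L^2$-orthogonality on the distinguished torus. The gap is in the step ``arranging the chart so that the closed polydisk $\{|z|\leq 2R\}$ lies in its domain.'' The local coordinates $z_i$ are fixed on a neighbourhood of $p$ in the compact manifold $X$, and their image contains a polydisk of some \emph{fixed} radius $\rho_0$; you cannot enlarge it to $2R$ for arbitrary $R$. Rescaling the coordinates $z\mapsto z/t$ does make the chart look larger, but it simultaneously rescales the induced identification $T_pX\cong\mathbb{C}^n$, so the set $\{|w|\leq R\}$ you are trying to bound moves with it. In a fixed identification your argument therefore only produces a bound on $\{|w|\leq \rho_0/2\}$.

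For $R>\rho_0/2$ the Cauchy estimate gives
\[
\frac{1}{k}\ln|s_{\hom}(w)|^2 \;\leq\; C_0+\frac{2}{k}\ln\binom{m+n-1}{n-1}+\frac{2m}{k}\ln\!\Big(\frac{R}{\rho_0}\Big),
\]
and now $\ln(R/\rho_0)>0$, so your dichotomy breaks down: when $m/k\to\infty$ the last term goes to $+\infty$ rather than $-\infty$. The negative $-2m(\ln 2)/k$ term in your write-up was precisely an artifact of taking the Cauchy radius to be $2R$, which is not available. What is actually needed to control this term is the uniform bound $\mathrm{ord}_p(s)/k\leq C$ for all nonzero $s\in H^0(X,kL)$ and all $k$; this is exactly the ``well-known'' fact the paper invokes, and with it the estimate becomes $\frac{1}{k}\ln|s_{\hom}|^2\leq C'\,\ln_+|z|^2+C''$, locally bounded on all of $T_pX$. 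So your overall strategy is sound, but you do need that a priori bound on the vanishing order---you cannot sidestep it by choice of chart.
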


\begin{proof}
Without loss of generality we can assume that the local coordinates $z_i$ are chosen such that the coordinate chart contains the unit polydisc $D_1^n$ in $\mathbb{C}^n$. It is easy to see that adding some constant $C$ to $\phi$ simply adds the same constant to $\phi_{L,p}$, thus without loss of generality we can assume that with respect to the chosen trivialization of $L$, $\phi\leq 0$ on the unit polydisc. Pick a $(S^1)^n$-invariant probability measure $\mu$ on $(S^1)^n\subseteq D_1^n$. If $s\in B_1(kL,k\phi)$ and locally $s=\sum a_{\alpha}z^{\alpha}$ we thus get $$1\geq \sup_{x\in X}\{|s(x)|^2e^{-k\phi}\}\geq \int_X |s|^2e^{-k\phi}d\mu\geq \int_{(S^1)^n}|\sum a_{\alpha}z^{\alpha}|^2d\mu=\sum|a_{\alpha}|^2,$$ where in the last step we used the orthogonality of the monomials. In particular we see that for each $\alpha,$ $|a_{\alpha}|\leq 1$. Let $\lambda:=ord_p(s)/k$. It follows that $$|s_{hom}(z)| \leq \sum_{|\alpha|=k\lambda}|z^{\alpha}|\leq (k\lambda)^n(\max_i\{|z_i|\})^{k\lambda}$$ and so $$\frac{1}{k}\ln|s_{hom}(z)|^2\leq 2n\frac{\ln(k\lambda)}{k}+\lambda\ln(\max_i\{|z_i|^2\}).$$ It is well known that $\lambda:=ord_p(s)/k$ is bounded by some uniform constant $C$ and hence $$\frac{1}{k}\ln|s_{hom}(z)|^2\leq C\max\{0,\ln(\max_i\{|z_i|^2\})\}+C',$$ where $C'$ is some other uniform constant. It follows that $$\phi_{L,p}(z)\leq C\max\{0,\ln(\max_i\{|z_i|^2\})\}+C',$$ proving the proposition.
\end{proof}

If $\phi'$ is some other positive metric of $L$ we can similarly define $\phi'_{L,p}$. It is easy to see that if $|\phi'-\phi|<C$ then $|\phi'_{L,p}-\phi_{L,p}|<C$. Thus the equivalence class of plurisubharmonic functions on $T_p X$ that only differ from $\tilde{\phi}_{L,p}$ by a bounded term is welldefined and only depends on the data $X$, $L$ and $p.$ The growth condition $\phi_{L,p}+O(1)$ on $T_p X$ is thus well-defined and we call it the \emph{canonical growth condition} of $L$ at $p.$  

A crucial aspect of the canonical growth condition is that it is $S^1$-invariant, i.e. there is an $S^1$-invariant representative. Indeed since each $\frac{1}{k}\ln|s_{hom}|^2$ is $S^1$-invariant the same is true for the supremum $\phi_{L,p}$.

\section{The equivalence between $\phi_{\Delta,p}$ and $\phi_{L_{\Delta},p}$}

Before describing the properties of our canonical growth conditions $\phi_{L,p}+O(1)$ we will show that in the toric setting described earlier they coincide with toric growth conditions $\phi_{\Delta,p}+O(1)$ on $T_p X_{\Delta}$ for fixed points $p$.

\begin{theorem} \label{thm1}
In the toric setting $(X_{\Delta},L_{\Delta})$ and $p$ a fixed point for the torus action then the canonical growth condition $\phi_{L_{\Delta},p}+O(1)$ coincides with the toric growth condition $\phi_{\Delta,p}+O(1)$.
\end{theorem}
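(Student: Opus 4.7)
The plan is to establish the two inequalities $\phi_{L_\Delta,p}\geq \phi_{\Delta,p}-C$ and $\phi_{L_\Delta,p}\leq \phi_{\Delta,p}+C$ for some constant $C$, which together give the asserted equivalence of growth conditions. Since both growth conditions are independent of the choices used to define them, I fix once and for all an integer $k_0$ with $k_0L_\Delta$ very ample, take as smooth (positive) metric on $L_\Delta$ the Fubini-Study type metric $\phi := \tfrac{1}{k_0}\log\sum_{\alpha\in k_0\Delta_{\mathbb Z}}|s_\alpha|^2$, and use this same $\phi$ in the definition of $\phi_{L_\Delta,p}$. With this choice, the restriction of $\phi$ to $\mathbb{C}^n\subseteq X_\Delta\cong T_pX_\Delta$ equals $\phi_{\Delta,p}$.

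For the lower bound, I would simply plug the generators $s_\alpha$ into the definition of $\phi_{L_\Delta,p}$. Each $s_\alpha$ satisfies $|s_\alpha|^2e^{-k_0\phi}=|z^\alpha|^2/\sum_\beta|z^\beta|^2\leq 1$ globally, so $s_\alpha\in B_1(k_0L_\Delta,k_0\phi)$, and its local Taylor expansion at $p$ is already the monomial $z^\alpha$, so $(s_\alpha)_{hom}=z^\alpha$. Hence $\phi_{L_\Delta,p}(z)\geq\tfrac{1}{k_0}\log|z^\alpha|^2$ for every $\alpha\in k_0\Delta_{\mathbb Z}$, and taking the maximum yields
$$\phi_{L_\Delta,p}\geq\tfrac{1}{k_0}\log\max_\alpha|z^\alpha|^2\geq\tfrac{1}{k_0}\log\sum_\alpha|z^\alpha|^2-\tfrac{\log N_{k_0}}{k_0}=\phi_{\Delta,p}-\tfrac{\log N_{k_0}}{k_0},$$
where $N_{k_0}:=|k_0\Delta_{\mathbb Z}|$.

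For the upper bound, I would use the orthogonality of monomials made available by the $(S^1)^n$-invariance of $\phi$. Write an arbitrary $s\in B_1(kL_\Delta,k\phi)$ as $s=\sum_{\alpha\in k\Delta_{\mathbb Z}}c_\alpha s_\alpha$. Integrating the pointwise bound $|s(z)|^2\leq e^{k\phi(z)}=\sum_\beta|z^\beta|^2$ over the real torus $(S^1)^n\subset(\mathbb{C}^*)^n$ and using orthogonality of distinct monomials gives $\sum_\alpha|c_\alpha|^2\leq N_k:=|k\Delta_{\mathbb Z}|$, hence $|c_\alpha|\leq\sqrt{N_k}$. Setting $d:=ord_p(s)$, the leading homogeneous part $s_{hom}=\sum_{\alpha\in k\Delta_{\mathbb Z},\,|\alpha|=d}c_\alpha z^\alpha$ consists of at most $N_k$ monomials, each of which also appears in the defining sum of $e^{k\phi_{\Delta,p}}$. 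Thus
$$|s_{hom}(z)|^2\leq N_k^2\max_\alpha|z^\alpha|^2\leq N_k^2\sum_{\beta\in k\Delta_{\mathbb Z}}|z^\beta|^2=N_k^2e^{k\phi_{\Delta,p}(z)},$$
so $\tfrac{1}{k}\log|s_{hom}|^2\leq\phi_{\Delta,p}+\tfrac{2\log N_k}{k}$, and taking the supremum over $s$ and $k$ gives $\phi_{L_\Delta,p}\leq\phi_{\Delta,p}+O(1)$.

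There is no real obstacle here: the toric setting makes every estimate explicit. The only point that requires a moment's care is that the error $\tfrac{2\log N_k}{k}$ appearing in the upper bound must be bounded uniformly in $k$; this follows from the polynomial growth $N_k=O(k^n)$ of the number of lattice points in $k\Delta$. The essential mechanism of the proof is the orthogonality of distinct monomials over $(S^1)^n$, which is available precisely because the chosen representative metric $\phi$ is $(S^1)^n$-invariant.
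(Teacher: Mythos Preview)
Your lower bound matches the paper's exactly. For the upper bound, however, you take a genuinely different and more elementary route. The paper argues via the envelope $\psi^\lambda := \sup\{\psi \leq \phi : \psi \in PSH(X_\Delta, L_\Delta),\ \nu_p(\psi) \geq \lambda\}$: using only the diagonal $S^1$-invariance of $\phi$, it observes that $t \mapsto \psi^\lambda(e^{t/2}z) - \lambda t$ is convex and increasing, so the homogenized limit $\psi^\lambda_{hom}(z):=\lim_{\tau\to 0}(\psi^\lambda(\tau z)-\lambda\ln|\tau|^2)$ satisfies $\psi^\lambda_{hom} \leq \psi^\lambda \leq \phi$ on $\mathbb{C}^n$, while $\tfrac{1}{m}\ln|s_{hom}|^2 \leq \psi^\lambda_{hom}$ for every section with $ord_p(s)=m\lambda$. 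Your argument instead exploits the full $(S^1)^n$-symmetry to bound Fourier coefficients directly. Your approach is shorter and entirely explicit in the toric setting; the paper's approach, by contrast, is a rehearsal of the $\psi^\lambda$ machinery that drives the general (non-toric) theory developed in the later sections, and in particular only needs the $S^1$-symmetry that survives outside the toric world.

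One technical slip to fix: your displayed identity $e^{k\phi(z)} = \sum_\beta |z^\beta|^2$ (and likewise $\sum_{\beta\in k\Delta_{\mathbb Z}}|z^\beta|^2 = e^{k\phi_{\Delta,p}}$) is false for $k \neq k_0$; with $\phi$ built from $k_0$ one has $e^{k\phi} = \bigl(\sum_{\alpha\in k_0\Delta_{\mathbb Z}}|z^\alpha|^2\bigr)^{k/k_0}$. This does not damage the argument. On $(S^1)^n$ the right-hand side is the constant $N_{k_0}^{k/k_0}$, so orthogonality gives $\sum_\alpha|c_\alpha|^2 \leq N_{k_0}^{k/k_0}$ and hence $\tfrac{1}{k}\log|c_\alpha|^2 \leq \tfrac{\log N_{k_0}}{k_0}$, uniformly in $k$; and for any $\alpha \in k\Delta_{\mathbb Z}$, writing $\alpha/k\in\Delta$ as a convex combination of (integral) vertices gives $\tfrac{1}{k}\log|z^\alpha|^2 \leq \phi_{\Delta,p}(z)$ directly. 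With these corrections your upper bound goes through as intended.
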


\begin{proof}
In the construction of $\phi_{L_{\Delta},p}$ we will assume that $kL_{\Delta}$ is very ample and use the positive metric $\phi:=\frac{1}{k} \log (\sum_{\alpha \in k\Delta_{\mathbb{Z}}}|s_{\alpha}|^2),$ where $s_{\alpha}$ restricts to $z^{\alpha}$ on $\mathbb{C}^n\subseteq X_{\Delta}$. It is then clear that $||s_{\alpha}||_{k\phi,\infty}\leq 1.$ We also have that $(s_{\alpha})_{hom}=z^{\alpha}$ and thus we get that $$\phi_{L,p}(z)\geq \sup_{\alpha \in k\Delta_{\mathbb{Z}}}\left\{\frac{1}{k}\ln|z^{\alpha}|^2\right\}.$$ On the other hand it is easy to see that $$\sup_{\alpha \in k\Delta_{\mathbb{Z}}}\left\{\frac{1}{k}\ln|z^{\alpha}|^2\right\}=\frac{1}{k} \log \left(\sum_{\alpha \in k\Delta_{\mathbb{Z}}}|z^{\alpha}|^2\right)+O(1)=\phi_{\Delta,p}+O(1).$$

For the other inequality, let $s\in H^0(mL_{\Delta}),$ $ord_p(s)=m\lambda,$ $$||s||_{m\phi,\infty}\leq 1.$$ It follows that $$\frac{1}{m}\ln|s|^2\leq \psi^{\lambda}:=\sup\{\psi\leq \phi: \psi\in PSH(X_{\Delta},L_{\Delta}),\nu_p(\psi)\geq \lambda\}.$$ Since $\phi$ is $S^1$-invariant it follows that $\psi^{\lambda}$ is $S^1$-invariant. This implies that $\psi^{\lambda}(e^{t/2}z)$ is convex in $t$. Combined with the fact that $\nu_0(\psi^{\lambda})=\lambda$ it shows that $\psi^{\lambda}(e^{t/2}z)-\lambda t$ is increasing in $t$ and therefore we have that $$\psi^{\lambda}_{hom}(z):=\lim_{\tau\to 0}(\psi^{\lambda}(\tau z)-\lambda\ln |\tau|^2)\leq \psi^{\lambda}$$ on $\mathbb{C}^n\cong T_p X_{\Delta}.$ On the other hand $$\frac{1}{m}\ln|s_{hom}|^2\leq \psi^{\lambda}_{hom},$$ and together this shows that $$\phi_{L,p}\leq \phi_{\Delta,p}+O(1).$$
\end{proof}

\section{On the loghomogeneous parts of $\phi_{L,p}$}

In this section we will just assume $L$ to be big.

Recall that a $psh$ function $v$ on some complex vector space $V$ is $\lambda$-loghomogeneous if for every $\tau\in \mathbb{C}$ and $z\in V$ we have that $$v(\tau z)=v(z)+\lambda \ln|\tau|^2.$$ Note that if $ord_p(s)=k\lambda$ then $\frac{1}{k}\ln|s_{hom}|^2$ is $\lambda$-loghomogeneous.

Pick $\lambda\in \mathbb{Q}_{\geq 0}.$ If there are $k$:s and sections $s\in H^0(X,kL)$ with $ord_p(s)=k\lambda$ we define $$\phi_{L,p}^{\lambda}:=\sup^*\left\{\frac{1}{k}\ln|s_{hom}|^2: s\in B_1(kL)\setminus \{0\}, ord_p(s)=k\lambda, k\in \mathbb{N} \right\}.$$ If no such sections exist we set $\phi_{L,p}^{\lambda}:=-\infty$.

It is easy to see that 
\begin{equation} \label{homsplitting}
\phi_{L,p}=\sup_{\lambda\in \mathbb{Q}_{\geq 0}}^*\{\phi_{L,p}^{\lambda}\}.
\end{equation} 

Clearly $\phi_{L,p}^{\lambda}$ is $\lambda$-loghomogeneous. 

\begin{lemma}
$\phi_{L,p}^{\lambda}$ is concave in $\lambda$ (recall that $\lambda\in \mathbb{Q}_{\geq 0}$).
\end{lemma}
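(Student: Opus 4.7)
The plan is to exploit the multiplicativity of leading homogeneous parts under products of sections. Fix $\lambda_1,\lambda_2\in \mathbb{Q}_{\geq 0}$ with $\phi_{L,p}^{\lambda_i}\not\equiv -\infty$ (the remaining case is trivial) and let $\lambda=t\lambda_1+(1-t)\lambda_2\in \mathbb{Q}_{\geq 0}$. Then necessarily $t=p/q\in \mathbb{Q}\cap[0,1]$, and the boundary cases $t\in\{0,1\}$ are trivial, so we may assume $0<p<q$. Given $s_i\in B_1(k_iL,k_i\phi)$ with $ord_p(s_i)=k_i\lambda_i$, I would form the product $s:=s_1^a s_2^b$ with $a:=pk_2$ and $b:=(q-p)k_1$. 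One checks that $s\in H^0(X,(ak_1+bk_2)L)$ satisfies $ord_p(s)=(ak_1+bk_2)\lambda$, that $\|s\|_{(ak_1+bk_2)\phi,\infty}\leq 1$ by submultiplicativity of the sup norm, and that $s_{hom}=(s_1)_{hom}^a(s_2)_{hom}^b$; hence
\[
\tfrac{1}{ak_1+bk_2}\ln|s_{hom}|^2 \;=\; t\cdot\tfrac{1}{k_1}\ln|(s_1)_{hom}|^2 + (1-t)\cdot\tfrac{1}{k_2}\ln|(s_2)_{hom}|^2.
\]

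Next I would observe that this pointwise identity exhibits the pre-regularization $\tilde{\phi}_{L,p}^{\lambda}$ (the supremum in the definition of $\phi_{L,p}^{\lambda}$ before applying $^*$) as bounded below, for each admissible pair, by a sum whose two summands depend only on $(s_1,k_1)$ and on $(s_2,k_2)$ respectively. Taking these two suprema independently is therefore legal and yields
\[
\tilde{\phi}_{L,p}^{\lambda}(z)\;\geq\;t\,\tilde{\phi}_{L,p}^{\lambda_1}(z)+(1-t)\,\tilde{\phi}_{L,p}^{\lambda_2}(z).
\]

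The hard part will be passing from these pre-regularized sups to their upper semicontinuous regularizations $\phi_{L,p}^{\lambda_i}=(\tilde{\phi}_{L,p}^{\lambda_i})^*$. Since a locally-bounded-above supremum of plurisubharmonic functions agrees with its regularization off a pluripolar (hence Lebesgue-null) set, the inequality $\phi_{L,p}^{\lambda}\geq t\,\phi_{L,p}^{\lambda_1}+(1-t)\,\phi_{L,p}^{\lambda_2}$ will hold almost everywhere. Both sides are psh on $T_pX$ (each $\phi_{L,p}^{\lambda_i}$ by the argument of Proposition \ref{proplocbound} applied on the fixed loghomogeneous stratum of order $\lambda_i$), and the classical fact that a psh function is recovered as the decreasing limit of its averages over shrinking balls promotes any almost-everywhere inequality between psh functions to one holding everywhere. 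This would give the desired pointwise concavity of $\lambda\mapsto \phi_{L,p}^{\lambda}(z)$.
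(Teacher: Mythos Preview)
Your proof is correct and follows essentially the same approach as the paper: both exploit the multiplicativity $(s_1s_2)_{hom}=(s_1)_{hom}(s_2)_{hom}$ and the submultiplicativity of the sup norm to bound $\phi_{L,p}^{\lambda}$ from below by a convex combination. You are in fact more careful than the paper in two respects: you take powers $s_1^a s_2^b$ so that the convex-combination weight is exactly the prescribed rational $t$ (the paper just multiplies $s$ and $t$ once and leaves the weight as $k/(k+m)$, implicitly relying on the freedom to replace sections by powers), and you explicitly justify the passage through the usc regularization via the negligible-set and mean-value arguments, a step the paper omits entirely.
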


\begin{proof}
Let $s\in B_1(kL)$ and $t\in B_1(mL)$ with $ord_p(s)=k\lambda_1$ and $ord_p(t)=m\lambda_2$. It is easy to see that $st\in B_1((k+m)L)$ with $ord_p(st)=k\lambda_1+m\lambda_2.$ We also have that $(st)_{hom}=s_{hom}t_{hom}.$ If $\eta:=\frac{k\lambda_1+m\lambda_2}{k+m}$ we get that 
\begin{eqnarray*}
\phi_{L,p}^{\eta}\geq \frac{1}{k+m}\ln|(st)_{hom}|^2=\frac{k}{k+m}\frac{1}{k}\ln|s_{hom}|^2+\frac{m}{k+m}\frac{1}{m}\ln|t_{hom}|^2.
\end{eqnarray*} 
Since $\phi_{L,p}^{\lambda_1}$ and $\phi_{L,p}^{\lambda_2}$ are given a supremum of functions of the form $\frac{1}{k}\ln|s_{hom}|^2$ and $\frac{1}{m}\ln|t_{hom}|^2$ respectively this yields the concavity.
\end{proof}

From this and Lemma \ref{splitting2} it follows that 
\begin{equation} \label{eqloghom}
\phi_{L,p}^{\lambda}(z)=\inf_{\tau\in \mathbb{C}}\{\phi_{L,p}(\tau z)-\lambda \ln|\tau|^2\}+O(1).
\end{equation}

In Section \ref{Secinv} we saw that a $\lambda$-loghomogeneous psh function on $\mathbb{C}^n$ was equivalent to a singular positive metric of $\lambda\mathcal{O}(1)$ on $\mathbb{P}^{n-1}.$ Thus we can think of $\phi_{L,p}^{\lambda}$ as an element in $PSH(\mathbb{P}(T_p X),\lambda \mathcal{O}(1)).$

Let $\pi: \tilde{X}\to X$ denote the blowup of $X$ at $p$ and let $E$ denote the exceptional divisor. Note that $E$ is naturally identified with $\mathbb{P}(T_p X).$ Let $s_E$ be a defining section for $E$.  

\begin{lemma} \label{obvequiv}
For any $k,m\in \mathbb{N}$ we have that $$\{s\in H(X,kL): ord_p(s)\geq m\}\cong H^0(\tilde{X},k\pi^*L-mE)$$ where the equivalence map is given by $s\mapsto \pi^*s/s_E^m.$
\end{lemma}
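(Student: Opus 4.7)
The plan is to establish the isomorphism by exhibiting an inverse map $t \mapsto (\pi^{-1})^*(t \cdot s_E^m)$ and verifying that the two constructions are inverse to each other. The key ingredients are: a local computation on the blowup showing that $\pi^*$ respects orders of vanishing in the expected way, and the standard fact that $\pi_*\mathcal{O}_{\tilde X} = \mathcal{O}_X$ (which follows from $X$ being smooth, hence normal, and $\pi$ being a birational morphism with connected fibers), giving by the projection formula the identification $H^0(\tilde X, k\pi^*L) \cong H^0(X, kL)$.

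First I would verify well-definedness of the forward map. Working in one of the standard blowup charts with coordinates $(w_1,\dots,w_n)$ in which $E = \{w_1 = 0\}$ and $\pi$ takes the form $(w_1,w_2,\dots,w_n) \mapsto (w_1, w_1 w_2, \dots, w_1 w_n)$, the pullback of a monomial is $\pi^*(z^\alpha) = w_1^{|\alpha|}\, w_2^{\alpha_2}\cdots w_n^{\alpha_n}$. Hence if $s = \sum_{|\alpha|\geq m} a_\alpha z^\alpha$ has $\mathrm{ord}_p(s)\geq m$, then $\pi^* s$ vanishes along $E$ to order at least $m$, and (after choosing a local trivialization of $\pi^*L$ and writing $s_E$ as $w_1$ times a unit in this chart) the quotient $\pi^*s / s_E^m$ extends holomorphically across $E$, defining a section of $k\pi^*L - mE$. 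A similar check in the other charts, together with the fact that $\pi$ is an isomorphism on $\tilde X \setminus E$, shows global holomorphicity.

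Next I would argue injectivity: since $\pi$ is birational, $\pi^*s = 0$ forces $s = 0$, so multiplication by $s_E^{-m}$ is also injective. For surjectivity, given $t \in H^0(\tilde X, k\pi^*L - mE)$, the product $t \cdot s_E^m$ lies in $H^0(\tilde X, k\pi^*L)$; by the projection formula and $\pi_*\mathcal{O}_{\tilde X} = \mathcal{O}_X$ it descends to a unique $s \in H^0(X, kL)$ with $\pi^* s = t\cdot s_E^m$. It remains to check $\mathrm{ord}_p(s)\geq m$: in the same local chart as above, $\pi^*s$ is divisible by $w_1^m$, and expanding $s = \sum a_\alpha z^\alpha$ gives $\pi^*s = \sum a_\alpha w_1^{|\alpha|} w_2^{\alpha_2}\cdots w_n^{\alpha_n}$, so divisibility by $w_1^m$ forces $a_\alpha = 0$ for all $|\alpha| < m$, i.e.\ $\mathrm{ord}_p(s) \geq m$. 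The two constructions are mutually inverse by construction.

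The only nontrivial step is the local verification that order of vanishing at $p$ on $X$ matches order of vanishing along $E$ on $\tilde X$; this is a standard property of the blowup in smooth coordinates but is the one place where the specific geometry of $\pi$ enters. Everything else is formal manipulation with the projection formula.
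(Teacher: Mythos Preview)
Your proof is correct. The paper itself gives no argument for this lemma beyond the single sentence ``The proof is obvious,'' so there is nothing to compare against; you have supplied exactly the standard verification that the author deemed unnecessary to write out. The local blowup computation matching $\mathrm{ord}_p(s)$ with the vanishing order along $E$, together with the projection formula and $\pi_*\mathcal{O}_{\tilde X}=\mathcal{O}_X$ for the inverse, is the expected route.
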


The proof is obvious.

\begin{definition} \label{defcmax}
We let $C_{\max}$ be defined as the supremum of all $\lambda$ such that the $\mathbb{R}$-line bundle $\pi^*L-\lambda E$ is pseudoeffective.
\end{definition} 
We note that for $\lambda\in[0,C_{max})$ $\pi^*L-\lambda E$ is big. From Lemma \ref{obvequiv} we see that when $\lambda>C_{max}$ there are no sections of $H^0(X,kL)$ with $ord_p(s)=k\lambda$ and thus $\phi_{L,p}^{\lambda}\equiv -\infty$.

Let $\lambda\in[0,C_{max})$, so $\pi^*L-\lambda E$ is big. Note that $\pi^*\phi-\lambda\ln|s|^2$ is a metric on $\pi^*L-\lambda E$ which is smooth except for having a $+\infty$ singularity along $E.$ We let $$P_{\lambda}(\phi):=\sup^*\{\psi\leq \pi^*\phi-\lambda\ln|s_E|^2: \psi\in PSH(\tilde{X},\pi^*L-\lambda E)\}.$$ We then have that $P_{\lambda}(\phi)$ is a singular positive metric of $\pi^*L-\lambda E$ with minimal singularities. $P_{\lambda}(\phi)$ restricted to $E$ is then a singular positive metric of $-\lambda E_{|E}$. Recall that $E\equiv \mathbb{P}(T_p X)$ and thus $P_{\lambda}(\phi)\in PSH(\mathbb{P}(T_p X),\lambda\mathcal{O}(1)),$ i.e. the same kind of object as $\phi_{L,p}^{\lambda}.$ This leads to the main result of this section, which will be used in the proof of Theorem \ref{thmgrowsesh}.

\begin{proposition} \label{asenv}
For $\lambda\in[0,C_{max})\cap \mathbb{Q}$ we have that $P_{\lambda}(\phi)_{|E}=\phi_{L,p}^{\lambda}.$
\end{proposition}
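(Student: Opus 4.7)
Write $\lambda=a/b$ in lowest terms, set $M:=b\pi^{*}L-aE$, and consider the singular metric $\hat\phi:=b\pi^{*}\phi-a\ln|s_{E}|^{2}$ on $M$, which is $+\infty$ on $E$. By Lemma~\ref{obvequiv}, sections $t\in H^{0}(\tilde X,kM)$ correspond bijectively to $s\in H^{0}(X,kbL)$ with $\mathrm{ord}_{p}(s)\geq ka$ via $t=\pi^{*}s/s_{E}^{ka}$, and one computes $|t|^{2}e^{-k\hat\phi}=|s|^{2}e^{-kb\phi}$ on $\tilde X\setminus E$; in particular $\|t\|_{k\hat\phi,\infty}=\|s\|_{kb\phi,\infty}$. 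In standard blow-up coordinates the restriction $t|_{E}$ is identified with $s_{\hom}$, viewed as a section of $(M|_{E})^{\otimes k}=\mathcal{O}_{\mathbb P(T_{p}X)}(ka)$ precisely when $\mathrm{ord}_{p}(s)=ka$, while $t|_{E}\equiv 0$ when $\mathrm{ord}_{p}(s)>ka$. With these dictionaries the easy inequality $\phi_{L,p}^{\lambda}\leq P_{\lambda}(\phi)|_{E}$ is immediate: each $\tfrac{1}{kb}\ln|t|^{2}$ is a psh metric on $\pi^{*}L-\lambda E$ dominated by $\pi^{*}\phi-\lambda\ln|s_{E}|^{2}$, hence by $P_{\lambda}(\phi)$, and restricting while re-indexing by $k'=kb$ (so that $\gcd(a,b)=1$ yields $k'\lambda\in\mathbb N\iff b\mid k'$) reproduces the defining family of $\phi_{L,p}^{\lambda}$.

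For the reverse inequality the key step is to upgrade Proposition~\ref{approxsect} to the singular upper bound $\hat\phi$, namely to prove
$$b\,P_{\lambda}(\phi)\;=\;\sup{}^{*}\Bigl\{\tfrac{1}{k}\ln|t|^{2}:t\in H^{0}(\tilde X,kM),\ \|t\|_{k\hat\phi,\infty}\leq 1,\ k\geq 1\Bigr\}.$$
Since $\pi^{*}L-\lambda E$ is big for $\lambda<C_{\max}$, $M$ is big and $\mathrm{Amp}(M)$ is non-empty; moreover the pull-back identity $|t|^{2}e^{-k\hat\phi}=|s|^{2}e^{-kb\phi}$ transports the Bernstein--Markov property of any smooth volume form on $X$ (against $\phi$) to $\tilde X$ (against $\hat\phi$). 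This lets us run Berman's convergence-of-equilibrium-metrics argument---the same input used in the proof of Proposition~\ref{approxsect}---on $\mathrm{Amp}(M)$ and extend the equality to $\tilde X$ by the standard pluripolar cleanup. Granted the identity, restricting each $\tfrac{1}{k}\ln|t|^{2}$ to $E$ drops the sections with $\mathrm{ord}_{p}(s)>ka$ out of the supremum and replaces the others with $\tfrac{1}{k}\ln|s_{\hom}|^{2}$; after dividing by $b$ and re-indexing the resulting family on $E$ is exactly the one defining $\phi_{L,p}^{\lambda}$.

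The main obstacle is the envelope-by-sections identity with the singular upper bound $\hat\phi$, as Proposition~\ref{approxsect} is stated for smooth upper bounds. The pull-back formula is what makes this tractable: it collapses the relevant $L^{\infty}$ and Bernstein--Markov $L^{2}$ norms on $\tilde X$ down to the smooth setting on $X$, so that Berman's input applies on $\mathrm{Amp}(M)$ essentially verbatim. A secondary delicate point is that taking the regularised supremum must be checked to commute with restriction to $E$; this reduces to the observation that each candidate $\tfrac{1}{k}\ln|t|^{2}$ is locally bounded near every point of $E$ where it is not identically $-\infty$, so no extra upper-semicontinuous mass is produced upon restriction.
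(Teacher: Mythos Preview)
Your approach is essentially the same as the paper's: it too invokes the section-envelope formula for $P_\lambda(\phi)$ (via Proposition~\ref{approxsect} combined with Lemma~\ref{obvequiv}) and then restricts to $E$, identifying $(\pi^*s/s_E^{k\lambda})|_E$ with the $\lambda$-loghomogeneous function $\tfrac{1}{k}\ln|s_{\hom}|^2$. You are in fact more explicit than the paper about the point that the upper bound $\pi^*\phi-\lambda\ln|s_E|^2$ is singular along $E$---the paper simply asserts the envelope identity without comment---and your pull-back argument transporting the Bernstein--Markov input from $(X,\phi)$ to $(\tilde X,\hat\phi)$ is exactly the right way to fill this in.
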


\begin{proof}
By rescaling $L$ we can assume that $\lambda\in \mathbb{N}$ and by continuity we can assume that $\lambda\geq 1$. Proposition \ref{approxsect} combined with Lemma \ref{obvequiv} tells us that $$P_{\lambda}(\phi)=\sup^*\left\{\frac{1}{k}\ln|\pi^*s/s_E^{k\lambda}|^2: s\in B_1(kL,k\phi), ord_p(s)\geq k\lambda\right\}.$$ From this we see that $E$ is not contained in $\mathbb{B}_+(\pi^*L-\lambda E)$ and hence $$P_{\lambda}(\phi)_{|E}=\sup^*\left\{\frac{1}{k}\ln|(\pi^*s/s_E^{k\lambda})_{|E}|^2: s\in B_1(kL,k\phi), ord_p(s)\geq k\lambda\right\}.$$ It is not hard to see that $\frac{1}{k}\ln|(\pi^*s/s_E^{k\lambda})_{|E}|^2$ is precisely the singular positive metric of $\lambda\mathcal{O}(1)$ associated to the $\lambda$-loghomogeneous psh function $\frac{1}{k}\ln|s_{hom}|^2$ (see Section \ref{Secinv}). This then shows that $P_{\lambda}(\phi)_{|E}=\phi_{L,p}^{\lambda}.$
\end{proof}

\section{Seshadri constants}

First we will assume $L$ to be ample.

Recall that by Proposition \ref{propsesh1} the Seshadri constant $\epsilon(X,L,p)$ of an ample line bundle $L$ at a point $p$ equals the supremum of $\lambda$ such that $\pi^*L-\lambda E$ is nef. Note that since $L$ is ample $\epsilon(X,L,p)$ is also the supremum of all $\lambda$ such that $\pi^*L-\lambda E$ is nef and big.

\begin{proposition} \label{propnef}
The line bundle $\pi^*L-\lambda E$ is nef and big iff it is big and $\nu_x(\pi^*L-\lambda E)=0$ for all $x\in E.$
\end{proposition}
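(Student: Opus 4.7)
The plan is to handle the two directions separately and to reduce everything to Lemma \ref{lemmaboucksom}. The forward direction is immediate: if $F:=\pi^*L-\lambda E$ is big and nef, then by the ``in particular'' part of Lemma \ref{lemmaboucksom} we have $\nu_x(F)=0$ for every $x\in\tilde X$, and in particular for every $x\in E$.

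For the converse, I would assume $F$ is big with $\nu_x(F)=0$ for all $x\in E$, and aim to upgrade this to $\nu_x(F)=0$ for every $x\in\tilde X$; Lemma \ref{lemmaboucksom} would then close the proof. So the real content is that vanishing of the minimal multiplicities on $E$ propagates automatically to $\tilde X\setminus E$, and this is where the ampleness of $L$ enters.

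Concretely, for a fixed $x\in\tilde X\setminus E$ my plan is to exhibit an element of $PSH(\tilde X,F)$ that is locally bounded near $x$. The minimal-singularity metric $\psi$ of $F$ would then be locally bounded below near $x$, and is automatically locally bounded above (as an upper semicontinuous function), hence locally bounded near $x$, which gives $\nu_x(F)=\nu_x(\psi)=0$. To construct such a metric, I would first reduce to $\lambda\in\mathbb{N}$ by replacing $L$ with a suitable multiple and then arguing back to general $\lambda\in\mathbb{R}$ by continuity of $\nu_x$ for $\mathbb{R}$-line bundles. Since $F$ is big, $\dim H^0(\tilde X,mF)$ grows like $m^n$, and Lemma \ref{obvequiv} identifies this space with the subspace of $H^0(X,mL)$ consisting of sections with $ord_p(s)\geq m\lambda$. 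Since $\pi(x)\neq p$, the condition $s(\pi(x))=0$ cuts out a proper linear subspace, so for $m$ large there exists $s\in H^0(X,mL)$ with $ord_p(s)\geq m\lambda$ and $s(\pi(x))\neq 0$. The associated section $t:=\pi^*s/s_E^{m\lambda}\in H^0(\tilde X,mF)$ then satisfies $t(x)\neq 0$, so $\tfrac{1}{m}\ln|t|^2\in PSH(\tilde X,F)$ is locally bounded near $x$, as required.

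The main technical obstacle I foresee is the rational/real bookkeeping: the section-theoretic construction above only works for $\lambda$ rational, so I will need to invoke continuity of the minimal multiplicity under perturbation of $\lambda$ within the big cone to deduce the statement for general $\lambda\in\mathbb{R}$. This should be routine, but needs to be set up so that the local boundedness near $x$ survives taking the limit.
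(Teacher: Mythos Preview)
Your forward direction is fine and matches the paper. The converse, however, has a genuine gap.

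The problematic step is the assertion that, for $x\in\tilde X\setminus E$, ``the condition $s(\pi(x))=0$ cuts out a proper linear subspace'' of $\{s\in H^0(X,mL):\mathrm{ord}_p(s)\ge m\lambda\}$. Evaluation at $\pi(x)$ is certainly a hyperplane in $H^0(X,mL)$ (for $m$ large, by ampleness of $L$), but there is no reason this hyperplane cannot \emph{contain} the whole subspace $\{\mathrm{ord}_p\ge m\lambda\}$. That containment is exactly the statement that $x$ lies in the base locus of $|mF|$, and your dimension count does nothing to rule it out. Concretely, take $X$ to be $\mathbb P^2$ blown up at two points $q_1,q_2$, $L=3H-E_1-E_2$ ample, and blow up a third point $p$ on the line through $q_1,q_2$. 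For $\lambda$ slightly larger than $1$ the class $F=\pi^*L-\lambda E$ is big, yet the strict transform $\ell$ of that line satisfies $F\cdot\ell<0$, so every section of $mF$ vanishes along $\ell$ and in particular at every point of $\ell\setminus E$. Thus your argument, which uses only bigness of $F$ and never the hypothesis $\nu_x(F)=0$ on $E$, cannot succeed: the hypothesis is essential even for points off $E$.

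The paper proceeds quite differently. From $\nu_x(F)=0$ on $E$ (together with Lemma~\ref{lemmaboucksom} applied with the ample class $\pi^*L-\epsilon'E$) one obtains, for each small $\epsilon>0$, a singular positive metric of $\pi^*L-(\lambda-\epsilon)E$ that is smooth in a neighbourhood of $E$. Pushed down to $X$, this is a $\psi\in PSH(X,L)$ with an \emph{isolated} singularity $(\lambda-\epsilon)\ln|z|^2$ at $p$. Now the ampleness of $L$ enters: choose a smooth positive metric $\phi$ of $L$, arrange $\phi<\psi$ on the boundary of a small ball around $p$, and glue $\psi$ inside the ball with $\max(\psi,\phi)$ outside. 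The result is a metric of $L$ with the prescribed Lelong number at $p$ and locally bounded everywhere else, hence a locally bounded metric of $\pi^*L-(\lambda-\epsilon)E$ on all of $\tilde X$. This gives nefness of $\pi^*L-(\lambda-\epsilon)E$; letting $\epsilon\to0$ and using that nefness is closed finishes the proof. The point is that the paper produces a single global metric by gluing, rather than trying to find sections point-by-point.
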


\begin{proof}
One direction follows immediately from Theorem \ref{thmboucksom}. For the other direction, assume that $\nu_x(\pi^*L-\lambda E)=0$ for all $x\in E.$ Then for any small $\epsilon$ there is a singular positive metric of $\pi^*L-(\lambda -\epsilon)E$ which is smooth in a neighbourhood of $E.$ This corresponds to a $\psi\in PSH(X,L)$ with an isolated singularity of the form $(\lambda-\epsilon)\ln|z|^2$ near $p.$ Choose a smooth positive metric $\phi$ of $L$ which can assume to be strictly less than $\psi$ on the boundary of some small ball centered at $p.$ We can then glue $\psi$ on the ball with $\max(\psi,\phi)$ on the complement of the ball, and this will be a positive singular metric that is locally bounded away from $p.$ This then yields a locally bounded element of $PSH(\tilde{X},\pi^*L-(\lambda -\epsilon)E)$ and thus by Theorem \ref{thmboucksom} $\pi^*L-(\lambda -\epsilon)E$ is nef. Since $\epsilon>0$ was arbitrary and nefness is a closed condition we get that $\pi^*L-\lambda E$ is nef. 
\end{proof}

We then get the following characterization of the Seshadri constant using the canonical growth condition of an ample line bundle, generalizing Proposition \ref{proptoric2} in the toric case.

\begin{customthm}{B} \label{thmgrowsesh}
$$\epsilon(X,L,p)=\sup\{\lambda: \lambda\ln(1+|z|^2)\leq \phi_{L,p}+O(1)\}.$$
\end{customthm}

\begin{proof}
By Proposition \ref{asenv} we see that $\phi_{L,p}^{\lambda}(z)=\lambda \ln|z|^2+O(1)$ iff $P_{\lambda}(\phi)$ is locally bounded on $E$. In particular it implies that $\pi^*L-\lambda E$ is big whenever $\lambda<\sup\{\lambda: \lambda\ln(1+|z|^2)\leq \phi_{L,p}+O(1)\}$. So for such a $\lambda$ $\pi^*L-\lambda E$ is big, while $P_{\lambda}(\phi)$ being locally bounded on $E$ implies that $\nu_x(\pi^*L-\lambda E)=0$ for all $x\in E.$ By Proposition \ref{propnef} this is equivalent to $\pi^*L-\lambda E$ being nef and big. Thus $\epsilon(X,L,p)$ is equal to the supremum of $\lambda$ such that 
\begin{equation} \label{eqsesh1}
\phi_{L,p}^{\lambda}(z)=\lambda \ln|z|^2+O(1).
\end{equation}
If (\ref{eqsesh1}) holds for all $0\leq \lambda\leq \eta$ then clearly 
\begin{equation} \label{eqsesh2}
\eta\ln(1 +|z|^2)\leq \phi_{L,p}+O(1).
\end{equation} 
On the other hand, if (\ref{eqsesh2}) holds then since $$\phi_{L,p}^{\lambda}(z)=\inf_{\tau\in \mathbb{C}}\{\phi_{L,p}(\tau z)-\lambda \ln|\tau|^2\}+O(1)$$ we get that (\ref{eqsesh1}) holds for all $0\leq \lambda \leq \eta$, which concludes the proof.  
\end{proof}

More generally, all the numbers $\nu_x(\pi^*L-\lambda E)$ for $\lambda\in [0,C_{\max})$ and $x\in E$, which together contain rich information about the triple $(X,L,p)$, can similarly be recovered from $\phi_{L,p}+O(1)$.

We now turn to the case when $L$ is just big. We do however make the assumption that $p\in Amp(L)$.

\begin{proposition} \label{propseshbig}
We have that $\nu_x(\pi^*L-\lambda E)=0$ for all $x\in E$ iff $E\subseteq Amp(\pi^*L-\lambda' E)$ for all $\lambda'\in (0,\lambda)$.
\end{proposition}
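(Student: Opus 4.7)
My plan is to treat the two implications separately, interpolating classes in the forward direction and exploiting convexity in the reverse direction.

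For the forward implication, I first aim to find $\delta>0$ such that $E\subseteq Amp(\pi^*L-\delta E)$. Using the hypothesis $p\in Amp(L)$, I would decompose $L=A+D$ with $A$ ample on $X$ and $D$ effective avoiding $p$. Pulling back, $\pi^*L-\delta E=(\pi^*A-\delta E)+\pi^*D$, and for $\delta$ below the Seshadri constant $\epsilon(X,A,p)$ the class $\pi^*A-\delta E$ is ample on $\tilde{X}$ (by Proposition \ref{propsesh1}), while $\pi^*D$ is effective and disjoint from $E$; so every point of $E$ lies in the ample locus of $\pi^*L-\delta E$. Now for any $\lambda'\in(0,\lambda)$, shrink $\delta$ so that also $\delta<\lambda'$, and apply Lemma \ref{lemmaboucksom} to $F:=\pi^*L-\lambda E$ (big, with $\nu_x(F)=0$ by assumption) and $G:=\pi^*L-\delta E$ (with $x\in Amp(G)$). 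The lemma delivers $x\in Amp(tF+(1-t)G)$ for each $t\in(0,1)$, and the choice $t=(\lambda'-\delta)/(\lambda-\delta)\in(0,1)$ makes the convex combination equal to $\pi^*L-\lambda' E$. Hence $E\subseteq Amp(\pi^*L-\lambda' E)$.

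For the reverse implication, note first that $x\in Amp(H)$ trivially gives $\nu_x(H)=0$, so the hypothesis provides $\nu_x(\pi^*L-\lambda' E)=0$ for every $\lambda'\in(0,\lambda)$ and every $x\in E$. The task is to extend this vanishing to $\mu=\lambda$. My approach would be to show that $\mu\mapsto\nu_x(\pi^*L-\mu E)$ is a convex function on the open set where the class is big: given singular positive metrics with minimal singularities $\phi_i$ of $\pi^*L-\mu_i E$ for $i=1,2$, the convex combination $t\phi_1+(1-t)\phi_2$ lies in $PSH(\tilde{X},\pi^*L-(t\mu_1+(1-t)\mu_2)E)$ with Lelong number $t\nu_x(\pi^*L-\mu_1 E)+(1-t)\nu_x(\pi^*L-\mu_2 E)$ at $x$, which bounds the left-hand minimal multiplicity from above. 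Since convex functions on open intervals are continuous, letting $\lambda'\nearrow\lambda$ forces $\nu_x(\pi^*L-\lambda E)=0$.

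I expect the main obstacle will be precisely this final continuity step in the reverse direction. A direct metric construction fails: converting a smooth-near-$E$ singular positive metric of $\pi^*L-\lambda' E$ into one of $\pi^*L-\lambda E$ by subtracting $(\lambda-\lambda')\ln|s_E|^2$ breaks plurisubharmonicity along $E$, and there is no compensating singular positive metric of the non-pseudoeffective class $-(\lambda-\lambda')E$ to repair this. Working with $\nu_x$ as a function of the class, which is convex by the metric-averaging argument above, lets one bypass this obstacle cleanly, reducing everything to a one-variable convexity fact.
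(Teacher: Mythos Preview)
Your proof is correct and follows essentially the same route as the paper. The forward direction is identical: decompose $L=A+D$ with $A$ ample and $D$ effective missing $p$, deduce $E\subseteq Amp(\pi^*L-\delta E)$ for small $\delta$, then interpolate via Lemma~\ref{lemmaboucksom}. For the reverse direction the paper simply cites continuity of minimal multiplicities from \cite{Boucksom}, whereas you supply a self-contained convexity argument for $\mu\mapsto\nu_x(\pi^*L-\mu E)$; this is precisely how Boucksom establishes that continuity, so the difference is one of packaging rather than substance. One small point worth making explicit: your continuity-from-convexity step requires $\lambda$ to lie in the interior of the bigness interval $[0,C_{\max})$, which is implicit since $\nu_x(\pi^*L-\lambda E)$ is only defined for big classes.
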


\begin{proof}
If $E\subseteq Amp(\pi^*L-\lambda' E)$ for all $\lambda'\in (0,\lambda)$ then clearly $\nu_x(\pi^*L-\lambda' E)=0$ for all $x\in E$ and that  $\nu_x(\pi^*L-\lambda E)=0$ for all $x\in E$ follows from the continuity of minimal multiplicities (see \cite{Boucksom}). 
Since by assumption $x\in Amp(L)$ it follows that we can write $L$ as $A+D$ where $A$ is ample and $D$ is an effective divisor not containing $p$. Thus $\pi^*L=\pi^*A+\pi^*D$ and we note that $D$ is an effective divisor not intersecting $D$. Thus for small $\epsilon>0$ $E\subseteq Amp(\pi^*L-\epsilon E)$. The proposition now follows from Lemma \ref{lemboucksom}. 
\end{proof}

In the case when $L$ is just big we will need will prove the following version.  

\begin{customthm}{B'} \label{bigmainthmsesh}
Assume that $L$ is big and $p\in Amp(L)$. Then $$\epsilon_{mov}(X,L,p)=\sup\{\lambda: \lambda\ln(1+|z|^2)\leq \phi_{L,p}+O(1)\}.$$
\end{customthm}

Recall that then the moving Seshadri constant of $L$ at $p$, denoted by $\epsilon_{mov}(X, L, p)$, is defined by $$\epsilon_{mov}(X, L, p):=\sup\{\lambda: E\subseteq Amp(\pi^*L-\lambda E)\}.$$

\begin{proof}
The proof is basically the same as for Theorem \ref{thmgrowsesh}, but for the convenience of the reader we give the details.

As in the ample case Proposition \ref{asenv} implies that $\phi_{L,p}^{\lambda}(z)=\lambda \ln|z|^2+O(1)$ iff $P_{\lambda}(\phi)$ is locally bounded on $E$ so that $\pi^*L-\lambda E$ is big whenever $\lambda<\sup\{\lambda: \lambda\ln(1+|z|^2)\leq \phi_{L,p}+O(1)\}$. For such a $\lambda$ $\pi^*L-\lambda E$ is big, while $P_{\lambda}(\phi)$ being locally bounded on $E$ implies that $\nu_x(\pi^*L-\lambda E)=0$ for all $x\in E.$ By Proposition \ref{propseshbig} this is equivalent to $E\subseteq Amp(\pi^*L-\lambda' E)$ for $\lambda'\in (0,\lambda)$, meaning that $\epsilon_{mov}(X,L,p)$ is equal to the supremum of $\lambda$ such that 
\begin{equation} \label{eqsesh1big}
\phi_{L,p}^{\lambda}(z)=\lambda \ln|z|^2+O(1).
\end{equation}
If (\ref{eqsesh1big}) holds for all $0\leq \lambda\leq \eta$ then clearly 
\begin{equation} \label{eqsesh2big}
\eta\ln(1 +|z|^2)\leq \phi_{L,p}+O(1).
\end{equation} 
On the other hand, if (\ref{eqsesh2big}) holds then since $$\phi_{L,p}^{\lambda}(z)=\inf_{\tau\in \mathbb{C}}\{\phi_{L,p}(\tau z)-\lambda \ln|\tau|^2\}+O(1)$$ we get that (\ref{eqsesh1big}) holds for all $0\leq \lambda \leq \eta$, which concludes the proof.  
\end{proof}

\section{An alternative characterization}

In the toric case we saw that $\phi_{\Delta,p}$ was the restriction to $\mathbb{C}^n\subseteq X_{\Delta}$ of a global positive metric. In general there is no $\mathbb{C}^n\subseteq X$ and $\phi_{L,p}$ is not the restriction of a positive metric of $L.$ However, we will see that $\phi_{L,p}$ is the restriction of a singular positive metric, but on a larger space, where $T_p X$ naturally sits.

Let $\Pi:\mathcal{X}\to X\times \mathbb{P}^1$ denote the blowup of $(p,0)$. The fiber over $0\in \mathbb{P}^1$ has two components, the exceptional divisor $\mathcal{E}$ and $\tilde{X},$ the blowup of $p\in X.$ We denote this last blowup by $\pi:\tilde{X}\to X$. The exceptional divisor of $\Pi$ is naturally identified with $\mathbb{P}(T_p X \oplus \mathbb{C})$. It intersects $\tilde{X}$ along $E=\mathbb{P}(T_p X)$ and so $\mathcal{E}\setminus \tilde{X}=T_p X.$ 

On $X\times \mathbb{P}^1$ we have the ample (or just big when $L$ is big) $\mathbb{R}$-line bundle $$\mathcal{L}_0:=\pi_X^*L+(C_{\max}+1)\pi_{\mathbb{P}^1}^*\mathcal{O}(1).$$
We then let $\mathcal{L}$ be defined as $$\mathcal{L}:=\Pi^*\mathcal{L}_0-C_{\max}\mathcal{E}.$$

Similarly to Lemma \ref{obvequiv} one can easily show that 
\begin{equation} \label{equivalence}
PSH(\mathcal{X},\Pi^*\mathcal{L}_0-C\mathcal{E})\cong \{\Psi\in PSH(X\times \mathbb{P}^1,\mathcal{L}_0): \nu_{(p,0)}(\Psi)\geq C\},
\end{equation} 
where $\Psi$ is mapped to $\Pi^*\Psi-C\ln|s_{\mathcal{E}}|^2,$ $s_{\mathcal{E}}$ being a defining section for $\mathcal{E}$.

\begin{proposition} \label{propamploc}
If $L$ is ample then $\mathcal{L}$ is big and $\mathbb{B}_+(\mathcal{L})=\tilde{X}\cup (\pi_X\circ \Pi)^{-1}(\mathbb{B}_+(L)).$ More generally if $L$ is big and $x\in Amp(L)$ then $\mathcal{L}$ is still big and $\mathbb{B}_+(\mathcal{L})=\tilde{X}\cup (\pi_X\circ \Pi)^{-1}(\mathbb{B}_+(L)).$
\end{proposition}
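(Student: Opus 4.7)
The plan is to analyze $\mathcal{L}$ via the equivalence (\ref{equivalence}), which identifies singular positive metrics on $\mathcal{L}$ with singular positive metrics $\Phi$ on $\mathcal{L}_0$ satisfying $\nu_{(p,0)}(\Phi)\geq C_{\max}$. Throughout, I interpret the hypothesis as $p\in Amp(L)$ (the only point in context); when $L$ is ample we have $\mathbb{B}_+(L)=\emptyset$, so the claim reduces to $\mathbb{B}_+(\mathcal{L})=\tilde{X}$.

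\textbf{Step 1 (Bigness of $\mathcal{L}$).} I would build a singular strictly positive metric on $\mathcal{L}_0$ with sufficient Lelong number at $(p,0)$. Taking a singular strictly positive metric $\psi_L$ on $L$ (exists since $L$ is big) and forming
$$\Phi:=\pi_X^*\psi_L+(1-s)(C_{\max}+1)\pi_{\mathbb{P}^1}^*\ln(1+|z|^2)+s(C_{\max}+1)\pi_{\mathbb{P}^1}^*\ln|z|^2$$
for $s\in(C_{\max}/(C_{\max}+1),1)$, the curvature dominates the K\"ahler form $\pi_X^*\omega+(1-s)(C_{\max}+1)\pi_{\mathbb{P}^1}^*\omega_{FS}$ on $X\times\mathbb{P}^1$, and $\nu_{(p,0)}(\Phi)=s(C_{\max}+1)>C_{\max}$. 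Via (\ref{equivalence}) the resulting metric $\tilde{\Phi}$ on $\mathcal{L}$ has curvature $\Pi^*(dd^c\Phi)-C_{\max}[\mathcal{E}]$, which is strictly positive as a current on $\mathcal{X}$: the pullback of a K\"ahler form is strictly positive off $\mathcal{E}$, and the remaining coefficient of $[\mathcal{E}]$ is positive by the strict inequality. Hence $\mathcal{L}$ is big.

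\textbf{Step 2 (Containment $\mathbb{B}_+(\mathcal{L})\supseteq\tilde{X}\cup(\pi_X\circ\Pi)^{-1}(\mathbb{B}_+(L))$).} Restricting $\mathcal{L}=\Pi^*\pi_X^*L+\mathcal{E}+(C_{\max}+1)\tilde{X}$ to $\tilde{X}$ gives $\pi^*L-C_{\max}E$, which is on the boundary of the pseudoeffective cone and thus has volume zero. Consequently $\mathrm{vol}_{\mathcal{X}|\tilde{X}}(\mathcal{L})\leq \mathrm{vol}(\mathcal{L}|_{\tilde{X}})=0$, and the ELMNP characterization gives $\tilde{X}\subseteq\mathbb{B}_+(\mathcal{L})$. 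Next, take $y\in\mathbb{B}_+(L)$ (so $y\neq p$ since $p\in Amp(L)$) and $x\in(\pi_X\circ\Pi)^{-1}(y)$. If instead $x\in Amp(\mathcal{L})$, there exists an ample $A_{\mathcal{X}}$ on $\mathcal{X}$ and a psh metric on $\mathcal{L}-\epsilon A_{\mathcal{X}}$ with Lelong zero at $x$. Choosing $A_{\mathcal{X}}=\Pi^*(\pi_X^*A_X+\pi_{\mathbb{P}^1}^*\mathcal{O}(1))-\delta\mathcal{E}$ (ample for small $\delta$) and restricting the corresponding metric on $\mathcal{L}_0-\epsilon(\pi_X^*A_X+\pi_{\mathbb{P}^1}^*\mathcal{O}(1))$ to the fiber $X\times\{z\}$ (or to $\tilde{X}$ if $z=0$, using that $y\neq p$ so $\Pi$ is an isomorphism near $x$) produces a psh metric on $L-\epsilon A_X$ with Lelong zero at $y$, contradicting $y\in\mathbb{B}_+(L)$.

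\textbf{Step 3 (Reverse containment).} For $x$ outside $\tilde{X}\cup(\pi_X\circ\Pi)^{-1}(\mathbb{B}_+(L))$, I must produce, for some small $\epsilon>0$, a psh metric on $\mathcal{L}-\epsilon A_{\mathcal{X}}$ with Lelong zero at $x$. When $x\notin\mathcal{E}$, the map $\Pi$ is an isomorphism near $x$ and a variant of the Step 1 construction (using that $\pi_X(\Pi(x))\notin\mathbb{B}_+(L)$, so $\psi_L$ can be taken smooth at that point) works. The delicate case is $x\in\mathcal{E}\setminus E$: in local coordinates $(Z,w)$ centered at $(p,0)$ on $X\times\mathbb{P}^1$, I would use the model singularity
$$\Phi_1(Z,w)=(C_{\max}-\epsilon\delta)\ln(|Z|^2+|w|^2)+(\text{smooth strictly positive metric on }\mathcal{L}_0-\epsilon(\cdots)),$$
which near $(p,0)$ has Lelong number $C_{\max}-\epsilon\delta$ and, crucially, has directional Lelong number $C_{\max}-\epsilon\delta$ along every line $t\mapsto t(v,1)$ (by the identity $\ln(|tv|^2+|t|^2)=\ln|t|^2+\ln(|v|^2+1)$). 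Extending $\Phi_1$ globally by a regularized max with a smooth strictly positive metric on $\mathcal{L}_0-\epsilon(\cdots)$ (which exists in a neighborhood of $(p,0)$ because $p\in Amp(L)$), and using that for small $\epsilon$ we still have $C_{\max}-\epsilon\delta$ below the analogous threshold for $L-\epsilon A_X$, produces the required global metric. The Lelong number of the corresponding metric on $\mathcal{L}-\epsilon A_{\mathcal{X}}$ at any $x=[v:1]\in\mathcal{E}\setminus E$ equals the directional Lelong number of $\Phi_1$ along $(v,1)$ minus $C_{\max}-\epsilon\delta$, hence exactly zero.

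The principal obstacle is the last bullet of Step 3: it is the isotropy of the model singularity $\ln(|Z|^2+|w|^2)$ that equalizes directional Lelong numbers across all of $\mathcal{E}\setminus E$ simultaneously, and verifying that the regularized max construction preserves this local form near $(p,0)$ while globally yielding a strictly positive singular metric on $\mathcal{L}-\epsilon A_{\mathcal{X}}$ requires care with the perturbation parameters $\epsilon,\delta$.
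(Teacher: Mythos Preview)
Your Step 1 contains a genuine error. The current $\Pi^*(dd^c\Phi)-C_{\max}[\mathcal{E}]$ is \emph{not} a K\"ahler current on $\mathcal{X}$. The pullback $\Pi^*$ of a K\"ahler form on $X\times\mathbb{P}^1$ is degenerate in the directions tangent to $\mathcal{E}$ (in local blowup coordinates $(\tilde{z},\tau)$ with $\mathcal{E}=\{\tau=0\}$, the pullback has no $d\tilde{z}_i\wedge d\bar{\tilde{z}}_j$ component along $\tau=0$). Adding a positive multiple of the integration current $[\mathcal{E}]$ does not cure this: testing against a positive $(n,n)$-form of the type $\chi\,(id\tau\wedge d\bar\tau)\wedge\cdots$ gives zero for both the degenerate pullback and for $[\mathcal{E}]$, while it is strictly positive for $\epsilon\omega_{\mathcal{X}}$. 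So your $\tilde\Phi$ only shows $\mathcal{L}$ is pseudoeffective, not big. The paper avoids this by observing that $\Pi^*\mathcal{L}_0-\delta\mathcal{E}$ is genuinely ample on $\mathcal{X}$ for small $\delta>0$ (standard fact for blowups of a point), and that $\mathcal{L}$ lies strictly between this ample class and the pseudoeffective class $\Pi^*\mathcal{L}_0-(C_{\max}+1)\mathcal{E}$; bigness follows by convexity.

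Your Step 3 is in a different spirit from the paper and, as you yourself flag, is the delicate part. The paper bypasses the directional Lelong-number analysis entirely: it writes $\mathcal{L}=(1-\epsilon)(\Pi^*\mathcal{L}_0-C'\mathcal{E})+\epsilon(\Pi^*\mathcal{L}_0-\delta\mathcal{E})$ with the second summand ample, and for the first summand uses the metric coming from $\pi_X^*\phi+C'\ln|\tau|^2+(C_{\max}+1-C')\ln(1+|\tau|^2)$. The point is that this metric has its singularity along the whole divisor $\{\tau=0\}$, so after subtracting $C'\ln|s_{\mathcal{E}}|^2$ the singularity sits exactly on $\tilde{X}$ and the metric is smooth on all of $\mathcal{E}\setminus E$ at once---no isotropy argument, no regularized max, no local-to-global patching. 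In the big case the paper again avoids your construction by decomposing $L=A+D$ and reducing to the ample case for $A$. Your isotropic-model approach could in principle be made to work, but you would still need an honest ample perturbation on $\mathcal{X}$ (which brings you back to $\Pi^*\mathcal{L}_0-\delta\mathcal{E}$), and the global extension step is not yet justified.
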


\begin{proof}
First we assume $L$ is ample.

We pick $\delta>0$ small enough so that $\Pi^*\mathcal{L}_0-\delta \mathcal{E}$ is ample.
Let $$D_{\max}:=\sup\{\nu_{(p,0)}(\Psi): \Psi\in PSH(X\times \mathbb{P}^1,\mathcal{L}_0)\}.$$ 

Let $\phi$ be a positive metric of $L$ and $\tau$ the holomorphic variable on $\mathbb{C}\subseteq \mathbb{P}^1$. Then $\pi_X^*\phi+(C_{\max}+1)\ln|\tau|^2$and thus $\ln(1+|\tau|^2)$ is a singular positive metric on $\mathcal{L}_0$ with Lelong number $C{\max}+1$ at $(p,0)$, thus $$D_{\max}\geq C_{\max}+1>C_{\max}.$$ From (\ref{equivalence}) it follows that $\Pi^*\mathcal{L}_0-D_{\max}\mathcal{E}$ is pseudoeffective. Therefore $\mathcal{L}$ lies in the interior of a line segment joining the ample $\Pi^*\mathcal{L}_0-\delta \mathcal{E}$ with the pseudoeffective $\Pi^*\mathcal{L}_0-D_{\max}\mathcal{E}$ and is therefore big. 

We now prove that $\mathbb{B}_+(\mathcal{L})\subseteq\tilde{X}.$

We can write $\mathcal{L}$ as $$\mathcal{L}=(1-\epsilon)\left(\Pi^*\mathcal{L}_0-C'\mathcal{E}\right)+\epsilon(\Pi^*\mathcal{L}_0-\delta \mathcal{E})$$ where $C':=\frac{C_{\max}-\delta \epsilon}{1-\epsilon}$. Note that the second term $\epsilon(\Pi^*\mathcal{L}_0-\delta \mathcal{E})$ is ample by assumption. To prove that $\mathbb{B}_+(\mathcal{L})\subseteq\tilde{X}$ it is thus enough to find a singular positive metric of $\Pi^*\mathcal{L}_0-C'\mathcal{E}$ with analytic singularities and which is smooth away from $\tilde{X}$. Note that for $\epsilon>0$ small enough $C'<C_{\max}+1$. Let $\phi$ be a positive metric of $L$. Then we let $$\Phi:=\pi_X^*\phi+C'\ln|\tau|^2+(C_{\max}+1-C')\ln(1+|\tau|^2)).$$ Here $\ln(1+|\tau|^2)$ is the pullback of the Fubini-Study metric on $\mathcal{O}(1)$, while $\ln|\tau|^2$ is the pullback of the metric with Lelong number $1$ at $0$. We see that $\Phi$ is a singular positive metric on $\mathcal{L}_0$ with Lelong number $C'$ at $(p,0)$ and therefore $\Pi^*\Phi-C'\ln|s_{\mathcal{E}}|^2$ is a singular positive metric of $\Pi^*\mathcal{L}_0-C'\mathcal{E}$. It clearly has analytic singularities and is only singular along $\tilde{X}$, thus it follows that $\mathbb{B}_+(\mathcal{L})\subseteq \tilde{X}.$ 

We now want to show that $\mathbb{B}_+(\mathcal{L})=\tilde{X}$. We argue by contradiction so we assume that $\mathbb{B}_+(\mathcal{L})\neq\tilde{X}$. By definition this means that there is an $x\in\tilde{X}$ and a singular strictly positive metric $\Psi$ of $\mathcal{L}$ which has analytic singularities and which is smooth near $x$. It follows that $\Psi_{|\tilde{X}}$ is a singular strictly positive metric of $\mathcal{L}_{|\tilde{X}}=\pi^*L-C_{\max}E$. This implies that $\pi^*L-C_{\max}E$ is big, which is a contradiction since this would mean that $\pi^*L-CE$ would be pseudoeffective for some $C>C_{\max}$.

We now consider the case when $L$ is big and $x\in Amp(L)$.

We write $L=A+D$ where $A$ is ample and $D$ is an effective divisor with support exactly $\mathbb{B}_+(L)$. Let $C_A$ denote $C_{max}$ with respect to $A$ while we let $C_L$ denote the $C_{max}$ with respect to $L$. Since $L\geq A$ we have that $C_A\leq C_L$. Let also $\mathcal{L}_A$ denote $\mathcal{L}$ with respect to $A$ and $\mathcal{L}_L$ denote $\mathcal{L}$ with respect to $L$. Hence $$\mathcal{L}_L=\mathcal{L}_A+(\pi_X\circ \Pi)^*D+(C_L-C_A)((\pi_X\circ \Pi)^*\mathcal{O}(1)-\mathcal{E}).$$ From the argument in the ample case we get that we can write $\mathcal{L}_A$ as something ample plus some positive multiple of $\tilde{X}$. We can also write $(C_L-C_A)((\pi_X\circ \Pi)^*\mathcal{O}(1)-\mathcal{E})=(C_L-C_A)\tilde{X}$, which then shows that $\mathbb{B}_+(\mathcal{L})\subseteq \tilde{X}\cup (\pi_X\circ \Pi)^{-1}(\mathbb{B}_+(L)).$ On the other hand the argument in the proof of Proposition \ref{propamploc} implies that $\mathbb{B}_+(\mathcal{L})\supseteq\tilde{X}$. Finally since any singular positive metric of $\mathcal{L}_L$ on each fiber $X_{\tau}$, $\tau \neq 0$, restricts to a singular positive metric of $L$ it follows that $\mathbb{B}_+(\mathcal{L})\supseteq  (\pi_X\circ \Pi)^{-1}(\mathbb{B}_+(L))$, and we are done.
\end{proof}

From now on we will assume that $L$ is big with $p\in Amp(L)$, in particular $L$ can be ample.

\begin{theorem} \label{thmaltchar}
If $\Phi\in PSH(\mathcal{X},\mathcal{L})$ has minimal singularities then $$\Phi_{|T_p X}=\phi_{L,p}+O(1).$$ 
\end{theorem}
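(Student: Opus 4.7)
The approach is to translate everything through the bijection (\ref{equivalence}): $\Phi \in PSH(\mathcal{X}, \mathcal{L})$ with minimal singularities corresponds to a $\Psi \in PSH(X\times\mathbb{P}^1, \mathcal{L}_0)$ with $\nu_{(p,0)}(\Psi) \geq C_{\max}$, minimal among such. By Proposition \ref{propamploc} the subset $T_pX$ is contained in $Amp(\mathcal{L})$, so the restriction $\Phi|_{T_pX}$ is a locally bounded psh function; working in the blowup chart $z_i = \tau w_i$ around $T_pX \subset \mathcal{E}$ and unwinding the definitions, one sees that $\Phi|_{T_pX}(w)$ agrees, up to a bounded term coming from the trivializations, with the $\tau \to 0$ limit of $\Psi(\tau w, \tau) - C_{\max}\ln|\tau|^2$.

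For the lower bound $\phi_{L,p}\leq \Phi|_{T_pX} + O(1)$, I would take any $s \in B_1(kL, k\phi)$ with $ord_p(s) = k\lambda$ and form the section $\tilde S_s := \pi_X^* s \otimes s_0^{k(C_{\max}-\lambda)}\otimes s_\infty^{k(\lambda+1)}$ of $k\mathcal{L}_0$, where $s_0, s_\infty \in H^0(\mathbb{P}^1, \mathcal{O}(1))$ vanish at $0$ and $\infty$. Since $ord_{(p,0)}(\tilde S_s) \geq kC_{\max}$, this descends via (\ref{equivalence}) to a section of $k\mathcal{L}$ whose restriction to $T_pX$ works out, in blowup coordinates, to a nonzero constant multiple of $s_{hom}(w)$; its sup-norm with respect to any fixed smooth reference on $\mathcal{L}_0$ is bounded uniformly in terms of $\|s\|_{k\phi,\infty}$. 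The minimal singularities of $\Phi$ then yield $\frac{1}{k}\ln|\tilde S_s|^2 \leq \Phi + O(1)$; restricting to $T_pX$ and taking the regularized supremum over $s$ and $k$ gives $\phi_{L,p} \leq \Phi|_{T_pX} + O(1)$.

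The reverse inequality $\Phi|_{T_pX} \leq \phi_{L,p} + O(1)$ is the main difficulty and is where Kiselman's minimum principle enters. After averaging $\Psi$ over the $S^1$-action $\tau \mapsto e^{i\theta}\tau$ (which preserves $\mathcal{L}_0$ and the Lelong number condition at $(p,0)$ without increasing singularities), I may assume $\Psi(z,\sigma)$ is $S^1$-invariant in $\sigma$, hence convex in $\ln|\sigma|^2$. For each rational $\lambda \in [0, C_{\max}]$, Kiselman's minimum principle applied to an appropriate combination of $\Psi$ and the pluriharmonic terms $\ln|\sigma|^2$ and $\ln(1+|\sigma|^2)$ produces a psh function $\psi^\lambda$ on $X$ representing a singular positive metric of $L$ with Lelong number at $p$ equal to $\lambda$. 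Minimal singularities of $\Psi$ force $\psi^\lambda$ to be of minimal singularities among such metrics, so by Proposition \ref{asenv} its $\lambda$-homogenization at $p$ equals $\phi_{L,p}^\lambda$ up to a bounded term. A further Legendre-type identification going back to formula (\ref{eqloghom}) matches this homogenization with the $\lambda$-loghomogeneous component of $\Phi|_{T_pX}$, and assembling the pieces over $\lambda$ via (\ref{homsplitting}) and Lemma \ref{splitting2} yields $\Phi|_{T_pX} \leq \phi_{L,p} + O(1)$. The principal obstacle is the bookkeeping needed to check that the Kiselman construction really produces the envelope $\psi^\lambda$ with the correct minimality property.
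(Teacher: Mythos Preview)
Your proposal and the paper share the same core ingredients: the Kiselman minimum principle applied to an $S^1$-invariant $\Psi$ in the $\tau$-variable, the decomposition into $\lambda$-loghomogeneous pieces, and the identification of those pieces with the envelopes $\psi^\lambda$ on $X$. The organization, however, is different, and the paper's is cleaner.

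The paper does not prove two separate inequalities. Instead it constructs a \emph{specific} $\Phi$ with minimal singularities as follows: it sets $\chi:=\pi_X^*\phi+(C_{\max}+1)\ln_+|\tau|^2$, shows via Kiselman (Proposition~\ref{minsing}) that the envelope $P(\chi)$ equals $\sup_\lambda \Psi^\lambda$ with $\Psi^\lambda:=\pi_X^*\psi^\lambda+(C_{\max}-\lambda)\ln|\tau|^2+(1+\lambda)\ln_+|\tau|^2$, and then computes directly in blowup coordinates (Proposition~\ref{identprop}) that each $\Phi^\lambda:=\Pi^*\Psi^\lambda-C_{\max}\ln|s_{\mathcal E}|^2$ restricts on $T_pX$ to $\psi^\lambda_{hom}=\phi_{L,p}^\lambda$. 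Taking the sup over $\lambda$ gives $\Phi_{|T_pX}=\phi_{L,p}$ on the nose, not just up to $O(1)$.

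Two points in your route deserve attention. First, your lower bound uses sections $\tilde S_s$ of $k\mathcal L_0$ involving $s_0^{k(C_{\max}-\lambda)}$; when $C_{\max}$ is irrational $\mathcal L_0$ and $\mathcal L$ are genuine $\mathbb R$-line bundles, so there are no such sections. The fix is immediate: use the singular metric $\pi_X^*(\frac{1}{k}\ln|s|^2)+(C_{\max}-\lambda)\ln|\tau|^2+(\lambda+1)\ln_+|\tau|^2$ instead --- this is exactly the paper's $\Psi^\lambda$ when $\psi^\lambda$ is replaced by $\frac{1}{k}\ln|s|^2$. Second, your ``Legendre-type identification'' matching $(\Phi_{|T_pX})^\lambda$ with the homogenization of the Kiselman envelope $\theta^\lambda$ is genuinely nontrivial bookkeeping (it involves commuting a $\lim_{\tau\to 0}$ with an $\inf_\sigma$ after a change of variables). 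The paper sidesteps this entirely by building $\Phi$ as $\sup_\lambda\Phi^\lambda$ from the outset and restricting each summand separately; this is what you gain from the explicit construction.
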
 

The rest of the Section will be devoted to prove this alternative characterization of the canonical growth condition.

Recall that $$\psi^{\lambda}:=\sup\{\psi\leq\phi: \psi\in PSH(X,L), \nu_p(\psi)\geq \lambda\}.$$ 

Now let $$\chi:=\pi_X^*\phi+(C_{\max}+1)\pi_{\mathbb{P}^1}^*\ln_+|\tau|^2\in PSH(X\times \mathbb{P}^1,\mathcal{L}_0),$$ and $$P(\chi):=\sup\{\Psi\leq \chi: \Psi\in PSH(X\times \mathbb{P}^1,\mathcal{L}_0),\nu_{(p,0)}(\Psi)\geq C_{\max}\}.$$ Note that since $\chi(z,\tau)$ is independent of the argument of $\tau$ the same is true for $P(\chi)(z,\tau)$.   

\begin{proposition} \label{minsing}
We have that
$$P(\chi)=\sup\{\Psi^{\lambda}: \lambda\in [0,C_{\max}]\},$$ where $$\Psi^{\lambda}:= \pi_{X}^*\psi^{\lambda}+\pi_{\mathbb{P}^1}^*((C_{\max}-\lambda)\ln|\tau|^2+(1+\lambda)\ln_+|\tau|^2).$$
\end{proposition}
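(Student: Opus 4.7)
My plan is to establish the two inequalities separately. For the easy direction $\sup_\lambda \Psi^\lambda \leq P(\chi)$, I would verify that each $\Psi^\lambda$ belongs to the defining class of $P(\chi)$. Since $\psi^\lambda\in PSH(X,L)$ and the coefficients $C_{\max}-\lambda$ and $1+\lambda$ are nonnegative for $\lambda\in [0,C_{\max}]$, we have $\Psi^\lambda\in PSH(X\times\mathbb{P}^1,\mathcal{L}_0)$; the inequality $\Psi^\lambda\leq \chi$ follows from the identity $\chi-\Psi^\lambda = (\phi-\psi^\lambda)+(C_{\max}-\lambda)(\ln_+|\tau|^2-\ln|\tau|^2)$, both summands of which are nonnegative; and Lelong numbers add in independent directions, giving $\nu_{(p,0)}(\Psi^\lambda)\geq \lambda+(C_{\max}-\lambda)=C_{\max}$.

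For the reverse inequality I would pick any $\Psi$ in the defining class and, using that $\chi$ and the Lelong-number constraint are $S^1$-rotation-invariant in $\tau$, average to assume $\Psi$ is $S^1$-invariant in $\tau$. Then $g_z(t):=\Psi(z,e^{t/2})$ is convex in $t$ for each fixed $z$, and Legendre duality yields
\[
\Psi(z,\tau)=\sup_{\mu\geq 0}\bigl[c_\mu(z)+\mu\ln|\tau|^2\bigr],\qquad c_\mu(z):=\inf_{\tau\in\mathbb{C}^*}\bigl[\Psi(z,\tau)-\mu\ln|\tau|^2\bigr].
\]
By Kiselman's minimum principle, applied exactly as in the proof of Lemma \ref{lem2}, each $c_\mu$ is psh in $z$. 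Taking $\tau=1$ shows $c_\mu\leq \phi$, while $\Psi\leq \phi+(C_{\max}+1)\ln_+|\tau|^2$ forces $c_\mu\equiv -\infty$ for $\mu>C_{\max}+1$; thus each relevant $c_\mu$ lies in $PSH(X,L)$ and in particular $c_\mu\leq \psi^0=P(\phi)$.

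The key refinement is extracted from the Lelong bound: $\nu_{(p,0)}(\Psi)\geq C_{\max}$ gives $\Psi(z,\tau)\leq C_{\max}\ln(|z-p|^2+|\tau|^2)+O(1)$ near $(p,0)$, and evaluating at $|\tau|=|z-p|$ yields $c_\mu(z)\leq (C_{\max}-\mu)\ln|z-p|^2+O(1)$ near $p$ for any $\mu\in [0,C_{\max}]$. Therefore $\nu_p(c_\mu)\geq C_{\max}-\mu$, and by definition of $\psi^\lambda$ we obtain $c_\mu\leq \psi^{C_{\max}-\mu}$. Substituting $\lambda=C_{\max}-\mu$ in the range $\mu\in[0,C_{\max}]$ and splitting the outer supremum at $\mu=C_{\max}$,
\[
\Psi(z,\tau)\leq \sup_{\lambda\in [0,C_{\max}]}\bigl[\psi^\lambda(z)+(C_{\max}-\lambda)\ln|\tau|^2\bigr] \,\vee\, \sup_{\mu\in (C_{\max},C_{\max}+1]}\bigl[\psi^0(z)+\mu\ln|\tau|^2\bigr].
\]
A short case split on $|\tau|\lessgtr 1$ then shows both terms are bounded by $\sup_\lambda\Psi^\lambda$: for $|\tau|\leq 1$ the first supremum coincides with $\sup_\lambda\Psi^\lambda$ since $\ln_+|\tau|^2=0$, and the second, attained in the limit $\mu\to C_{\max}^+$, equals $\Psi^0$; for $|\tau|\geq 1$ both suprema are dominated by $\Psi^0=\psi^0+(C_{\max}+1)\ln|\tau|^2$.

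The main obstacle will be justifying the psh-ness of $c_\mu$ in $z$, since the integrand $\Psi(z,\tau)-\mu\ln|\tau|^2$ is not itself plurisubharmonic in $\tau$ and so the usual Kiselman principle does not apply directly. This is handled by exactly the mechanism used in the proof of Lemma \ref{lem2}, exploiting $S^1$-invariance and convexity in $\ln|\tau|$ to reduce the infimum to one along a logarithmically convex direction. The Lelong-number estimate for $c_\mu$ also requires some care, as it must be extracted coherently along a curve of the form $|\tau|=|z-p|$ inside a fixed local chart at $p$.
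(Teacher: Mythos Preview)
Your proof is correct and follows the same route as the paper: check that each $\Psi^\lambda$ is a competitor for $P(\chi)$, then split $P(\chi)$ via the fiberwise Legendre transform in $\ln|\tau|^2$, using Kiselman's minimum principle and the Lelong bound at $(p,0)$ to show the $\lambda$-piece is dominated by $\psi^\lambda$. Two small remarks: the averaging step is superfluous, since the paper simply observes that $P(\chi)$ is already $S^1$-invariant in $\tau$ because $\chi$ and the defining class are; and your concern about Kiselman is unfounded, since in the coordinate $w=\log\tau$ the integrand $\Psi(z,e^w)-2\mu\,\mathrm{Re}\,w$ is genuinely psh in $(z,w)$ and independent of $\mathrm{Im}\,w$, so the minimum principle applies directly, exactly as the paper (and Lemma~\ref{lem2}) invokes it.
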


\begin{proof}
Clearly $\Psi^{\lambda}\leq \chi$ and by the additivity of Lelong numbers $\nu_{(p,0)}(\Psi^{\lambda})\geq C_{\max}.$ Therefore $\Psi^{\lambda}\leq P(\chi)$ and hence $$P(\chi)\geq \sup\{\Psi^{\lambda}: \lambda\in [0,C_{\max}]\}.$$
Now define $$\theta^{\lambda}(z):=\inf\{P(\chi)(z,\tau)-(C_{\max}-\lambda)\ln|\tau|^2: \tau\in \mathbb{C}^*\}.$$ Since $P(\chi)(z,\tau)$ is independent of the argument of $\tau$ it follows from Kiselman's minimum principle \cite{Kiselman} that $\theta^{\lambda}\in PSH(X,L).$ Letting $\tau=1$ shows that $\theta_{\lambda}\leq \phi.$ We have that $\nu_{(p,0)}(P(\chi))=C_{\max}$ and so locally near $(p,0)$ we have that $P(\chi)(z,\tau)\leq C_{\max}\ln(|z|^2+|\tau|^2)+O(1)$ if $z_i$ are local holomorphic coordinates centered at $p.$ An elementary calculation shows that $$\inf\{C_{\max}\ln(|z|^2+|\tau|^2)-(C_{\max}-\lambda)\ln|\tau|^2: \tau\in \mathbb{C}^*\}\leq \lambda \ln|z|^2+O(1)$$ and thus $\nu_p(\theta^{\lambda})\geq \lambda.$ This implies that $\theta^{\lambda}\leq \psi^{\lambda}.$ On the other hand, exactly as in Lemma \ref{lem2} one can use the involution property of the Legendre transform to show that $$P(\chi)(z,\tau)=\sup\{\theta^{\lambda}(z)+(C_{\max}-\lambda)\ln|\tau|^2\}.$$ Since we have that $$\theta^{\lambda}(z)+(C_{\max}-\lambda)\ln|\tau|^2\leq \Psi^{\lambda}$$ we get that $$P(\chi)\leq \sup\{\Psi^{\lambda} : \lambda\in [0,C_{\max}]\}.$$  
\end{proof}

Let $$\Phi^{\lambda}:=\Pi^*\Psi^{\lambda}-C_{\max}\ln|s_{\mathcal{E}}|^2.$$ By (\ref{equivalence}) we see that this is a singular positive metric of $\mathcal{L}$. 

\begin{proposition} \label{identprop}
We have that $$(\Phi^{\lambda})_{|T_p X}=\phi_{L,p}^{\lambda}.$$
\end{proposition}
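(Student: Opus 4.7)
The plan is to compute the restriction of $\Phi^\lambda$ in one explicit local chart on $\mathcal{X}$ covering $\mathcal{E}\setminus\tilde{X}\cong T_pX$, and then identify the result with $\phi_{L,p}^\lambda$ via the minimal-singularity envelope $P_\lambda(\phi)$ appearing in Proposition \ref{asenv}. Choose local holomorphic coordinates $z=(z_1,\ldots,z_n)$ on $X$ centered at $p$ and let $\tau$ be the standard coordinate on $\mathbb{C}\subset\mathbb{P}^1$ near $0$. On the blowup $\mathcal{X}$ there is a chart with coordinates $(w_1,\ldots,w_n,\tau)$ in which the blowdown reads $(w,\tau)\mapsto(w\tau,\tau)$; in this chart $\mathcal{E}=\{\tau=0\}$ (with $\tau$ as a local defining section up to a unit), the strict transform $\tilde{X}$ is invisible, and the slice $\tau=0$ is exactly $\mathcal{E}\setminus\tilde{X}\cong T_pX$ under the natural identification $w\leftrightarrow w_i(\partial/\partial z_i)|_p$.

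Inserting the formula for $\Psi^\lambda$ from Proposition \ref{minsing} into $\Phi^\lambda=\Pi^*\Psi^\lambda-C_{\max}\ln|s_{\mathcal{E}}|^2$ and using $\ln|s_{\mathcal{E}}|^2=\ln|\tau|^2+O(1)$ in this chart, a direct calculation gives, for $|\tau|<1$,
$$\Phi^\lambda(w,\tau)=\psi^\lambda(w\tau)-\lambda\ln|\tau|^2+O(1).$$
Because $\nu_p(\psi^\lambda)\ge\lambda$, the convex function $t\mapsto\psi^\lambda(e^{t/2}w)-\lambda t$ has left slope at least $\lambda$, so the decreasing pointwise limit
$$\psi_{hom}^\lambda(w):=\lim_{\tau\to 0}\bigl(\psi^\lambda(\tau w)-\lambda\ln|\tau|^2\bigr)$$
is well-defined, $\lambda$-loghomogeneous, and after usc regularization is psh on $T_pX$. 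By the very definition of restriction of a singular positive metric to a prime divisor, this is exactly $(\Phi^\lambda)_{|T_pX}$ up to $O(1)$.

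To match $\psi_{hom}^\lambda$ with $\phi_{L,p}^\lambda$ I would perform the analogous computation for $P_\lambda(\phi)$ on $\tilde{X}$. The envelopes defining $\psi^\lambda$ on $X$ and $P_\lambda(\phi)$ on $\tilde{X}$ are immediately seen to be equivalent: any admissible competitor for one pulls back (resp.\ pushes down) to an admissible competitor for the other via Lemma \ref{obvequiv}, so $\pi^*\psi^\lambda=P_\lambda(\phi)+\lambda\ln|s_E|^2$. Working in the chart $(z_1,v_2,\ldots,v_n)$ of $\tilde{X}$ with $z_i=v_iz_1$ and $E=\{z_1=0\}$, the same manipulation yields
$$P_\lambda(\phi)_{|E}\bigl([1:v_2:\cdots:v_n]\bigr)=\lim_{z_1\to 0}\bigl(\psi^\lambda\bigl(z_1(1,v_2,\ldots,v_n)\bigr)-\lambda\ln|z_1|^2\bigr)+O(1),$$
which is precisely $\psi_{hom}^\lambda(1,v_2,\ldots,v_n)$. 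Combining this with Proposition \ref{asenv}, which identifies $P_\lambda(\phi)_{|E}$ with $\phi_{L,p}^\lambda$, we obtain $(\Phi^\lambda)_{|T_pX}=\phi_{L,p}^\lambda$.

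The main obstacle I expect is the careful $O(1)$-bookkeeping needed to ensure that the pointwise ``loghomogeneous limit'' genuinely computes the restriction of the possibly non-locally-bounded metric $\Phi^\lambda$ to $\mathcal{E}$, including across any pluripolar set where $\Phi^\lambda$ might have additional singularities. A robust implementation is to prove $(\Phi^\lambda)_{|T_pX}\le\phi_{L,p}^\lambda+O(1)$ and the reverse inequality separately, mirroring the two-sided envelope strategy used in the proof of Theorem \ref{thm1}.
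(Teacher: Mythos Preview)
Your approach is correct and essentially the same as the paper's: both work in the chart $(\tilde z,\tau)$ with $z=\tau\tilde z$, compute $\Phi^\lambda(\tilde z,\tau)=\psi^\lambda(\tau\tilde z)-\lambda\ln|\tau|^2$, and identify the limit as $\tau\to 0$ with $\phi_{L,p}^\lambda$. The only real difference is that the paper simply \emph{recalls} the identity $\phi_{L,p}^\lambda=\psi^\lambda_{hom}$ (effectively equation (\ref{localdescr})) as already known, whereas you re-derive it by passing through $P_\lambda(\phi)_{|E}$ and invoking Proposition~\ref{asenv}; your route is more self-contained, but the paper's is shorter.

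One minor remark: your $O(1)$ bookkeeping worries are unnecessary. If you take $s_{\mathcal E}=\tau$ exactly in this chart and note that $\ln_+|\tau|^2=0$ for $|\tau|<1$, the local expression $\Phi^\lambda(\tilde z,\tau)=\psi^\lambda(\tau\tilde z)-\lambda\ln|\tau|^2$ holds on the nose, and the restriction to $\{\tau=0\}$ is the decreasing limit $\psi^\lambda_{hom}(\tilde z)$ exactly, not just up to a bounded term. The two-sided envelope strategy you mention is not needed here.
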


\begin{proof}
Pick holomorphic coordinates $z_i$ centered at $p.$ Recall that 
\begin{equation} \label{localdescr}
\phi_{L,p}^{\lambda}(z)=\psi_{hom}^{\lambda}(z):=\lim_{\tau\to 0}\{\psi^{\lambda}(\tau z)-\lambda\ln|\tau|^2\}.
\end{equation}
Since $(z,\tau)$ are local coordinates near $(p,0)\in X\times \mathbb{P}^1$ we get that $(\tilde{z},\tau)$ with $\tau\tilde{z}_i=z_i$ are local coordinates near $T_p X\subseteq \mathcal{X}.$ In this coordinate chart $s_{\mathcal{E}}(\tilde{z},\tau)=\tau.$ Thus we get that locally $$\Phi^{\lambda}(\tilde{z},\tau)=\psi^{\lambda}(\tau\tilde{z})+(C_{\max}-\lambda)\ln|\tau|^2-C_{\max}\ln|\tau|^2=\psi^{\lambda}(\tau\tilde{z})-\lambda\ln|\tau|^2,$$ and therefore using (\ref{localdescr}) $$\Phi^{\lambda}(\tilde{z},0)=\phi_{L,p}^{\lambda}(\tilde{z}).$$ 
\end{proof}

We are now ready to prove Theorem \ref{thmaltchar}.

\begin{proof}
Let $$\Phi:=\sup\{\Phi^{\lambda}: \lambda\in [0,C_{\max}]\}.$$ From Proposition \ref{minsing} we see that $$\Phi=\Pi^*P(\chi)-C_{\max}\ln|s_{\mathcal{E}}|^2$$ and from (\ref{equivalence}) and the extremal property of $P(\chi)$ it follows that $\Phi$ has minimal singularities. From Proposition \ref{identprop} we also see that 
\begin{equation} \label{restreq}
\Phi_{|T_p X}=\phi_{L,p},
\end{equation}
which concludes the proof.
\end{proof}

\begin{remark}
One can show that $\Phi$ solves the complex homogeneous Monge-Amp\`ere equation on the part of $\mathcal{X}$ which lies over the unit disc, and corresponds to a weak geodesic ray emanating from $P(\phi)$. It therefore relates to the work by Ross and the author on canonical tubular neighbourhoods in K\"ahler geometry \cite{RWN}.
\end{remark}   

\section{Volumes}

We can use the alternative characterization to prove that the volume of $L$ is captured by the growth condition:

\begin{customthm}{A} \label{thmvolume}
For $L$ ample we have that 
\begin{equation} \label{eqvol}
(L^n)=\int_{T_p X}MA(\phi_{L,p}).
\end{equation}
More generally when $L$ is big and $x\in Amp(L)$ we have that $$\textrm{vol}(L)=\int_{T_p X}MA(\phi_{L,p}).$$
\end{customthm}

\begin{proof}
We prove directly the more general statement.

Let $\Psi\in PSH(\mathcal{X},\mathcal{L})$ have minimal singularities. Let $X_1$ be the fiber of $\mathcal{X}$ over $1\in \mathbb{C}\subseteq\mathbb{P}^1$. Then clearly $(X_1,\mathcal{L}_{|X_1})$ is isomorphic to $(X,L)$. Pick an element $\psi\in PSH(X,L)$ with minimal singularities. Then by (\ref{equivalence}) $\pi_X^*\psi+(C_{max}+1)\ln|\tau|^2$ corresponds to a singular positive metric of $\mathcal{L}$ which restricts to $\psi$ on the fiber $X_1$. It follows that $\Psi_{|X_1}$ is a singular positive metric of $L$ with minimal singularities. Combining Theorem \ref{bvolthm} and Theorem \ref{Histhm} then yields that $$\textrm{vol}_{\mathcal{X}|X_1}(\mathcal{L})=\int_{X_1}MA(\Psi_{|X_1})=\textrm{vol}(L).$$ By Theorem \ref{BFJthm} we also had that $$\textrm{vol}_{\mathcal{X}|X_1}(\mathcal{L})=\langle \mathcal{L}^n\rangle \cdot [X_1].$$ Note that $[X_1]=[\mathcal{E}]+[\tilde{X}]$ and thus $$\textrm{vol}(L)=\langle \mathcal{L}^n\rangle\cdot [\mathcal{E}]+\langle \mathcal{L}^n\rangle\cdot [\tilde{X}].$$ Since by Proposition \ref{propamploc} $\mathbb{B}_+(\mathcal{L})\supseteq \tilde{X}$ we get from Lemma \ref{BFJlemma} that $\langle \mathcal{L}^n\rangle\cdot [\tilde{X}]=0$. Since $\mathcal{E}$ meets the ample locus another application of Theorem \ref{BFJthm} and Theorem \ref{Histhm} gives that $$\langle \mathcal{L}^n\rangle\cdot [\mathcal{E}]=\textrm{vol}_{\mathcal{X}|\mathcal{E}}(\mathcal{L})=\int_{\mathcal{E}}MA(\Psi_{|\mathcal{E}}).$$ Using that Monge-Amp\`ere measures never put any mass on proper subvarieties (see e.g. \cite{BEGZ}), Theorem \ref{thmaltchar} and Lemma \ref{lemmacomp} we get that $$\int_{\mathcal{E}}MA(\Psi_{|\mathcal{E}})=\int_{T_pX}MA(\Psi_{|T_pX})=\int_{T_p X}MA(\phi_{L,p}).$$ Finally, combining all these equalities we get that $$\textrm{vol}(L)=\int_{T_p X}MA(\phi_{L,p}).$$
\end{proof}

\section{Infinitesimal Okounkov bodies}

In this section we assume $L$ to be big and $x\in Amp(L)$ (in particular we can take $L$ to be ample).

Let $V_{\bullet}$ be a complete flag of subspaces of $T_p X$ and choose coordinates $z_i$ centered at $p$ such that $V_i$ is the tangent space of $\{z_1=...=z_i\}$ at $p$. We will thus have an identification of $T_pX$ with $\mathbb{C}^n$ which we will use throughout this section. 

We recall from the introduction that we write each section $s\in H^0(X,kL)$ as a Taylor series $$s(z)=\sum_{\alpha}a_{\alpha}z^{\alpha}$$ and $$v(s):=\min\{\alpha: a_{\alpha}\neq 0\}$$ where the minimum is taken with respect to the deglex order. Then we define the infinitesimal Okounkov body $\Delta(L)$ (with respect to the flag $V_{\bullet}$) as the closed convex hull of the set $\{\frac{v(s)}{k}:s\in H^0(X,kL),k\geq 1\}.$ 

We want to show that $\Delta(L)$ can be recovered from the growth condition $\phi_{L,p}+O(1)$.

Pick $C\in \mathbb{N}$ such that $C\geq C_{\max}$ (for the definition of $C_{\max}$ see Definition \ref{defcmax}). Since $\phi_{L,p}\leq C_{\max}\ln(1+|z|^2)+O(1)\leq C\ln(1+|z|^2)+O(1)$ one can extend $\phi_{L,p}$ to a singular positive metric of $\mathcal{O}(C)$ on $\mathbb{P}^n$. Recall from Section \ref{Sectlinear} that a singular positive metric gives rise to a graded linear series. Thus let 
\begin{eqnarray*}
W^{\phi_{L,p}}_k:=\{s\in H^0(\mathbb{P}^n,\mathcal{O}(kC)):\ln|s|^2\leq k\phi_{L,p}+O(1)\}.
\end{eqnarray*}

\begin{lemma} \label{lemampser}
There is an $\epsilon>0$ such that for all $k$ $W^{\phi_{L,p}}_k$ contains all polynomials of degree less than $k\epsilon$ (seen as sections of $k\mathcal{O}(C)$).
\end{lemma}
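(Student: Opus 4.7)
The plan is to reduce the statement directly to Theorem \ref{bigmainthmsesh} via an elementary growth estimate for polynomials.

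First I would use the hypothesis $p \in Amp(L)$, which gives $\epsilon_{mov}(X,L,p) > 0$. Applying Theorem \ref{bigmainthmsesh} then yields some fixed $\epsilon$ with $0 < \epsilon < \min\{\epsilon_{mov}(X,L,p), C\}$ such that
\begin{equation*}
\epsilon \ln(1+|z|^2) \leq \phi_{L,p} + M
\end{equation*}
on $T_p X \cong \mathbb{C}^n$, for some constant $M \geq 0$. This $\epsilon$ will be the one in the lemma.

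Next, given $k$ and a polynomial $s$ on $\mathbb{C}^n$ of degree $d < k\epsilon$, viewed as a section of $\mathcal{O}(kC)$ (which makes sense since $d < k\epsilon \leq kC$), I would observe the elementary bound
\begin{equation*}
|s(z)|^2 \leq C_s (1+|z|^2)^d
\end{equation*}
for some constant $C_s$ depending on $s$. Taking logarithms and using $d < k\epsilon$ together with the Seshadri estimate gives
\begin{equation*}
\ln|s(z)|^2 \leq d\ln(1+|z|^2) + \ln C_s \leq k\epsilon \ln(1+|z|^2) + \ln C_s \leq k\phi_{L,p} + (kM + \ln C_s),
\end{equation*}
so $\ln|s|^2 \leq k\phi_{L,p} + O(1)$ and hence $s \in W^{\phi_{L,p}}_k$.

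There isn't really a main obstacle here: once Theorem \ref{bigmainthmsesh} is in hand, the argument is a one-line polynomial growth bound. The only subtle point to watch is that the definition of $W^{\phi_{L,p}}_k$ allows the $O(1)$ constant to depend on the individual section $s$, which is exactly what one gets from the constant $C_s$ in the polynomial bound; if a uniform $O(1)$ were required, one would need to restrict to a bounded family of polynomials (e.g.\ normalized in the sup-norm on the unit polydisc), but that is not what the lemma asks for.
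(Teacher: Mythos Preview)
Your proof is correct and follows essentially the same approach as the paper: pick $\epsilon$ below the moving Seshadri constant, invoke Theorem \ref{bigmainthmsesh} to get $\phi_{L,p}\geq \epsilon\ln(1+|z|^2)+O(1)$, and conclude that any polynomial of degree $<k\epsilon$ satisfies the defining inequality of $W^{\phi_{L,p}}_k$. Your version is slightly more explicit (the bound $\epsilon<C$ and the remark on the section-dependent $O(1)$), but the argument is the same.
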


\begin{proof}
Pick $0<\epsilon<epsilon(X, L, p)0$. Then we know from Theorem \ref{bigmainthmsesh} that $\phi_{L,p}\geq \epsilon\ln(1+|z|^2)+O(1)$ and thus if $p(z)$  is a polynomial of degree less than $k\epsilon$ we get that $|p(z)|^2e^{-k\phi_{L,p}}$ is bounded from above. By the definition of $W^{\phi_{L,p}}$ this means that the section of $\mathcal{O}(kC)$ corresponding to the polynomial $p(z)$ belongs to  $W^{\phi_{L,p}}_k$.
\end{proof}

We can now form the Okounkov body of the linear series $W^{\phi_{L,p}}$, using the coordinates $z_i$ and the deglex order. We denote this by $\Delta(W^{\phi_{L,p}})$. 

\begin{customthm}{C}
We have that $$\Delta(L)=\Delta(W^{\phi_{L,p}}).$$ Hence all infinitesimal Okounkov bodies of $L$ at $p$ are determined by the growth condition $\phi_{L,p}+O(1)$.
\end{customthm}

\begin{proof}
Okounkov bodies of general graded linear series such as $W^{\phi_{L,p}}$ were considered already in \cite{LazMus}. The main results for Okounkov bodies of line bundles also holds for more general linear series as long as they contain an ample series. Lemma \ref{lemampser} says exactly that $W^{\phi_{L,p}}$ contains such an ample series. It thus follows that $\Delta(W^{\phi_{L,p}})$ is a closed convex body and that $$\textrm{vol}(W^{\phi_{L,p}})=n!\textrm{vol}\Delta(W^{\phi_{L,p}}).$$ From Theorem \ref{volseries} combined with Theorem \ref{mainthmvolume} it follows that
\begin{equation} \label{volinok}
\textrm{vol}(W^{\phi_{L,p}}_{\bullet})\leq \int_{T_p X}MA(\phi_{L,p})=\textrm{vol}(L).
\end{equation}

On the other hand it is clear that for any $s\in H^0(X,kL)$ we have that $s_{hom}\in W^{\phi_{L,p}}_k$. It follows that $$\Delta(L)\subseteq \Delta(W^{\phi_{L,p}}).$$ By (\ref{volinok}) we get $$n!\textrm{vol}\Delta(L)=\textrm{vol}(L)\geq n!\Delta(W^{\phi_{L,p}})$$ which implies that in fact $$\Delta(L)=\Delta(W^{\phi_{L,p}}),$$ since any strict inclusion of closed convex bodies would lead to a strict inequality of volumes.
\end{proof}

\section{K\"ahler embeddings}

We will first consider the case when $L$ is ample.

We will use the alternative characterization of the growth condition to prove the next main result:

\begin{customthm}{D} \label{mainthmfit2}
Let $\omega_0=dd^c\phi_0$ be a K\"ahler form on $\mathbb{C}^n$. If for some isomorphism $\mathbb{C}^n\cong T_p X$ and $\epsilon>0$ we have that $$\phi_0\leq (1-\epsilon)\phi_{L,p}+O(1)$$ then $\omega_0$ fits into $(X,L)$ at $p.$
\end{customthm}

\begin{proof}
By Theorem \ref{thmregularity} we can pick a $\Phi\in PSH(\mathcal{X},\mathcal{L})$ with minimal singularities which is locally $C^1$ on $Amp(\mathcal{L}).$ Also pick local holomorphic coordinates $z_i$ centered at $p$ such that the unit ball $B_1$ is included in the coordinate chart $(U,g)$. Then if $\tilde{z}_i:=\tau^{-1}z_i$ and $V:=\{(\tilde{z},\tau)\in \mathbb{C}^n\times \mathbb{D}: |\tau \tilde{z}|<1\}$ we have a holomorphic embedding $F: V\to \mathcal{X}$ such that $F(\mathbb{C}^n\times \{0\})=T_p X.$ Note that $F(V)\subseteq \mathcal{X}\setminus \tilde{X}= Amp(\mathcal{L})$ so $\Phi(\tilde{z},\tau)$ is a $C^1$ psh function on $V.$ By Theorem \ref{thmaltchar} $$\Phi(\tilde{z},0)=\phi_{L,p}(\tilde{z})+O(1).$$ 

Pick an $R>0.$ Then there are constants $C$ and $R'$ such that for $|\tilde{z}|\leq R$: $$\phi_0(\tilde{z})+C>\Phi(\tilde{z},0)+1,$$ while for $|\tilde{z}|=R'$: $$\phi_0(\tilde{z})+C<\Phi(\tilde{z},0)-1.$$ By the continuity of $\Phi$ on $F(V)$ we can find an $0<\epsilon<1/R'$ such that $$|\Phi(\tilde{z},\epsilon)-\Phi(\tilde{z},0)|<1/2$$ for $|\tilde{z}|\leq R'.$ Since $\mathcal{L}$ restricts to $L$ on the fiber of $\mathcal{X}$ over $\epsilon\in \mathbb{P}^1$ we see that $\Phi(\cdot,\epsilon)$ corresponds to a singular positive metric of $L,$ which will be locally $C^1.$ Thus from Corollary \ref{regcor} we see that we can find a positive metric $\phi'$ of $L$ such that $$|\phi'(x)-\Phi(x,\epsilon)|<1/4.$$ Now we define $\phi_R$ as $\phi_R(z):=\max_{1/4}(\phi_0(z/\epsilon)+C,\phi'(z))$ on $\{|z|\leq \epsilon R'\}\subseteq U$, where as in the proof of Proposition \ref{proptoric3} $\max_{1/4}(x,y)$ is a regularized max function which coincides with the usual $\max(x,y)$ when $|x-y|>1/4$. Furthermore we set $\phi_R$ as being equal to $\phi'$ on $X\setminus \{|z|\leq \epsilon R'\}$. As in the proof of Proposition \ref{proptoric3} we see that $\phi_R$ is a positive metric, and if we now let $f_R: B_R\to X, z \mapsto g^{-1}(\epsilon z)$ we clearly get that $$\phi_R(f_R(z))=\phi_R(\epsilon z)=\phi_0(z)+C.$$ Hence $f_R$ gives us a K\"ahler embedding of $(B_R,{\omega_0}_{|B_R})$ into $(X,dd^c\phi_R)$. By construction we also have that $f_R(0)=p$ so we see that $\omega_0$ fits into $(X,L)$ at $p$.  
\end{proof}

We now come the case when $L$ is just big but $x\in Amp(L)$. 

\begin{definition}
Let $\omega_0$ be a K\"ahler form on $\mathbb{C}^n$. We say that $\omega_0$ \emph{fits into $(X,L)$} if for any $R>0$ there exists a K\"ahler current $\omega_R$ with analytical singularities on $X$ in $c_1(L)$ together with a K\"ahler embedding $f_R$ of the ball $(B_R,{\omega_0}_{|B_R})$ into $(X,\omega_R)$. Here $B_R:=\{|z|<R\}\subseteq \mathbb{C}^n$ denotes the usual euclidean ball of radius $R$. If the embeddings $f_R$ all can be chosen to map the origin to some fixed point $p\in X$ we say that $\omega_0$ \emph{fits into $(X,L)$ at $p$}. 
\end{definition}

\begin{customthm}{C'} \label{bigmainthmfit2}
Let $\omega_0=dd^c\phi_0$ be a K\"ahler form on $\mathbb{C}^n$ and assume that $p\in Amp(L)$. If for some isomorphism $\mathbb{C}^n\cong T_p X$ and $\epsilon>0$ we have that $$\phi_0\leq (1-\epsilon)\phi_{L,p}+O(1)$$ then $\omega_0$ fits into $(X,L)$ at $p.$
\end{customthm}

\begin{proof}
The proof is practically identical to the proof of Theorem \ref{mainthmfit2}
\end{proof}, but for the convenience of the reader we provide the details.

By Theorem \ref{thmregularity} we can pick a $\Phi\in PSH(\mathcal{X},\mathcal{L})$ with minimal singularities which is locally $C^1$ on $Amp(\mathcal{L}).$ Also pick local holomorphic coordinates $z_i$ centered at $p$ such that the unit ball $B_1$ is included in the coordinate chart $(U,g)$. Furthermore we assume that $U$ does not intersect $\mathbb{B}_+(L)$. Then if $\tilde{z}_i:=\tau^{-1}z_i$ and $V:=\{(\tilde{z},\tau)\in \mathbb{C}^n\times \mathbb{D}: |\tau \tilde{z}|<1\}$ we have a holomorphic embedding $F: V\to \mathcal{X}$ such that $F(\mathbb{C}^n\times \{0\})=T_p X.$ Note that by Proposition \ref{propamploc} $F(V)\subseteq
 Amp(\mathcal{L})$ so $\Phi(\tilde{z},\tau)$ is a $C^1$ psh function on $V.$ By Theorem \ref{thmaltchar} $$\Phi(\tilde{z},0)=\phi_{L,p}(\tilde{z})+O(1).$$ 

Pick an $R>0.$ Then there are constants $C$ and $R'$ such that for $|\tilde{z}|\leq R$: $$\phi_0(\tilde{z})+C>\Phi(\tilde{z},0)+1,$$ while for $|\tilde{z}|=R'$: $$\phi_0(\tilde{z})+C<\Phi(\tilde{z},0)-1.$$ By the continuity of $\Phi$ on $F(V)$ we can find an $0<\epsilon<1/R'$ such that $$|\Phi(\tilde{z},\epsilon)-\Phi(\tilde{z},0)|<1/2$$ for $|\tilde{z}|\leq R'.$ Since $\mathcal{L}$ restricts to $L$ on the fiber of $\mathcal{X}$ over $\epsilon\in \mathbb{P}^1$ we see that $\Phi(\cdot,\epsilon)$ corresponds to a singular positive metric of $L,$ which will be locally $C^1.$ Thus from Corollary \ref{regcor} we see that we can find a singular positive metric $\phi'$ of $L$ with analytic singularities such that $dd^c\psi'$ is a K\"ahler current and such that $$|\phi'(x)-\Phi(x,\epsilon)|<1/4.$$ Now we define $\phi_R$ as $\phi_R(z):=\max_{1/4}(\phi_0(z/\epsilon)+C,\phi'(z))$ on $\{|z|\leq \epsilon R'\}\subseteq U$, where as in the proof of Proposition \ref{proptoric3} $\max_{1/4}(x,y)$ is a regularized max function which coincides with the usual $\max(x,y)$ when $|x-y|>1/4$. Furthermore we set $\phi_R$ as being equal to $\phi'$ on $X\setminus \{|z|\leq \epsilon R'\}$. As in the proof of Proposition \ref{proptoric3} we see that $\phi_R$ is a singular positive metric with analytic singularities such that $dd^c\phi_R$ is a K\"ahler current. If we now let $f_R: B_R\to X, z \mapsto g^{-1}(\epsilon z)$ we clearly get that $$\phi_R(f_R(z))=\phi_R(\epsilon z)=\phi_0(z)+C.$$ Hence $f_R$ gives us a K\"ahler embedding of $(B_R,{\omega_0}_{|B_R})$ into $(X,dd^c\phi_R)$. By construction we also have that $f_R(0)=p$ so we see that $\omega_0$ fits into $(X,L)$ at $p$.

\noindent {\sc David Witt Nystr\"om, 
Department of Pure Mathematics and Mathematical Statistics, University of Cambridge, UK \\d.wittnystrom@dpmms.cam.ac.uk, danspolitik@gmail.com

\end{document}